\newcommand{\mW}{\mathbf{W}}
\newcommand{\mA}{\mathbf{A}}
\newcommand{\mx}{\mathbf{x}}
\newcommand{\ox}{\overline{x}}
\newcommand{\on}{\overline{n}}
\newcommand{\og}{\overline{g}}
\newcommand{\bE}{\mathbb{E}}
\newcommand{\mg}{\mathbf{g}}
\newcommand{\hV}{\hat{V}}
\newcommand{\T}{\intercal}
\def\sp#1{\color{black}{#1}}
\definecolor{darkgreen}{rgb}{0.01, 0.75,0.24}
\newtheorem{assumption}{Assumption}
\newtheorem{lemma}{Lemma}
\newtheorem{theorem}{Theorem}
\newtheorem{corollary}{Corollary}
\theoremstyle{remark}
\newtheorem{remark}{Remark}
\newcommand{\mF}{\mathcal{F}}
\begin{document}
	%
	\title{A Sharp Estimate on the Transient Time of Distributed Stochastic Gradient Descent}
	%
	%
	%
	
	\author{Shi~Pu,
		Alex~Olshevsky,
		and~Ioannis~Ch.~Paschalidis
		\thanks{This work was partially supported by the NSF under grants IIS-1914792, DMS-1664644, CNS-1645681, and ECCS-1933027, by the ONR under MURI grant N00014-19-1-2571, by the NIH under grants R01 GM135930 and UL54 TR004130, by the DOE under grant DE-AR-0001282, by the Shenzhen Research Institute of Big Data (SRIBD) under grant J00120190011, and by the NSFC under grant 62003287.}
		\thanks{S. Pu is with the School of Data Science, Shenzhen Research
			Institute of Big Data, The Chinese University of Hong Kong, Shenzhen, China.  (e-mail: pushi@cuhk.edu.cn).}
		\thanks{A. Olshevsky and I. Ch. Paschalidis are with the Department of Electrical and Computer Engineering and the Division of Systems Engineering, Boston
			University, Boston, MA (e-mails: alexols@bu.edu, yannisp@bu.edu).}
	}

	\maketitle
	
	\begin{abstract}
		This paper is concerned with minimizing the average of $n$ cost functions over a network in which agents may communicate and exchange information with each other. We consider the setting where only noisy gradient information is available. To solve the problem, we study the distributed stochastic gradient descent (DSGD) method and perform a non-asymptotic convergence analysis. For strongly convex and smooth objective functions, DSGD asymptotically achieves the optimal network independent convergence rate compared to centralized stochastic gradient descent (SGD). 
		Our main contribution is to characterize the transient time needed for DSGD to approach the asymptotic convergence rate.
		Moreover, we construct a ``hard" optimization problem 
		that proves the sharpness of the obtained result.
		Numerical experiments demonstrate the tightness of the theoretical results.
	\end{abstract}
	
	\begin{IEEEkeywords}
		distributed optimization, convex optimization, stochastic programming, stochastic gradient descent.
	\end{IEEEkeywords}

	%
	\IEEEpeerreviewmaketitle

	\section{Introduction}
	%
	%
	%
	%
	\IEEEPARstart{W}{e} consider the distributed optimization problem where a group of agents $\mathcal{N}=\{1,2,\ldots,n\}$ collaboratively seek an $x\in\mathbb{R}^p$ that minimizes the average of $n$ cost functions:
	\begin{equation}
		\min_{x\in \mathbb{R}^{p}}f(x)\left(=\frac{1}{n}\sum_{i=1}^{n}f_i(x)\right).  \label{opt Problem_def}
	\end{equation}%
	Each local cost function $f_i:\mathbb{R}^{p}\rightarrow \mathbb{R}$ is {\sp strongly convex, with Lipschitz continuous} gradient, and is known by agent $i$ only. The agents communicate and exchange information over a network.
	Problems in the form of (\ref{opt Problem_def}) find applications in multi-agent target seeking \cite{chen2012diffusion}, distributed machine learning \cite{forrester2007multi,nedic2017fast,cohen2017projected,baingana2014proximal,ying2018supervised,alghunaim2019distributed,brisimi2018federated}, and wireless networks \cite{cohen2017distributed,mateos2012distributed,baingana2014proximal}, among other scenarios.
	
	In order to solve (\ref{opt Problem_def}), we assume that at each iteration $k\ge 0$, the algorithm  we study is able to obtain noisy gradient estimates $g_i(x_i(k),\xi_i(k))$, where $x_i(k)$ is 
	the input for agent $i$, that satisfy the following condition.
	\begin{assumption}
		\label{asp: gradient samples}
		{\sp For all  $k\ge 0$, 
			each random vector $\xi_i(k)\in\mathbb{R}^m$ is independent across $i\in\mathcal{N}$.
					 Denote by $\mF(k)$ the $\sigma$-algebra generated by $\{x_i(0),x_i(1),\ldots,x_i(k)\mid i\in\mathcal{N}\}$. Then,
			}
		\begin{equation}
			\label{condition: gradient samples}
			\begin{split}
				& \mathbb{E}_{\xi_i(k)}[g_i(x_i(k),\xi_i(k))\mid {\sp \mF(k)}] =  \nabla f_i(x_i(k)),\\
				& \mathbb{E}_{\xi_i(k)}[\|g_i(x_i(k),\xi_i(k))-\nabla f_i(x_i(k))\|^2\mid {\sp \mF(k)}]  \\
				& \qquad\le  {\sp \sigma^2 + M\|\nabla f_i(x_i(k))\|^2,\quad\hbox{\ for some $\sigma,M>0$}.}
			\end{split}
		\end{equation}
	\end{assumption}
	Stochastic gradients appear in many machine learning problems. For example, suppose $f_i(x):=\bE_{\sp \xi_i\sim \mathcal{D}_i}[F_i(x,\xi_i)]$ represents the expected loss function for agent $i$, where $\xi_i$ are independent data samples gathered over time, {\sp and $\mathcal{D}_i$ represents the data distribution}. Then for any $x$ and $\xi_i$ {\sp sampled from $\mathcal{D}_i$}, $g_i(x,\xi_i):=\nabla F_i(x,\xi_i)$ is an unbiased estimator of $\nabla f_i(x)$. For another example, suppose $f_i(x):=(1/|\mathcal{S}_i|) \sum_{\zeta_j\in \mathcal{S}_i}F(x,\zeta_j)$ denotes an empirical risk function, where $\mathcal{S}_i$ is the local dataset for agent $i$. In this setting, the gradient estimation of $f_i(x)$ can incur noise from various sources such as {\sp minibatch random sampling of the local dataset} and {discretization \sp for reducing communication cost \cite{reisizadeh2019exact}}.
	
	Problem (\ref{opt Problem_def}) has been studied extensively in the literature under various distributed algorithms \cite{tsitsiklis1986distributed,nedic2009distributed,nedic2010constrained,lobel2011distributed,jakovetic2014fast,kia2015distributed,shi2015extra,di2016next,qu2017harnessing,nedic2017achieving,xu2017convergence,pu2020push}, among which the distributed gradient descent (DGD) method proposed in \cite{nedic2009distributed} has drawn the greatest attention.
	Recently, distributed implementation of stochastic gradient algorithms has received considerable interest.
	Several works 
	have shown that distributed methods may compare with their centralized counterparts under certain conditions. For example, the work in \cite{chen2012limiting,chen2015learning,chen2015learning2} first showed that, with sufficiently small constant stepsize, a distributed stochastic gradient method achieves comparable performance to a centralized method in terms of the steady-state mean-square-error.
	
	{\sp Despite the aforementioned efforts, it is unclear how long, or how many iterations it takes for a distributed stochastic gradient method to reach the convergence rate of centralized SGD. The number of required iterations, called ``transient time" of the algorithm, is a key measurement of the performance of the distributed implementation.} In this work, we perform a non-asymptotic analysis for the distributed stochastic gradient descent (DSGD) method adapted from DGD and the diffusion strategy \cite{chen2012diffusion}.\footnote{Note that in \cite{chen2012diffusion} this method was called ``Adapt-then-Combine''.} 
	In addition to showing that the algorithm asymptotically achieves the optimal convergence rate enjoyed by a centralized scheme, we precisely identify its non-asymptotic convergence rate as a function of characteristics of the objective functions and the network (e.g., spectral gap $(1-\rho_w)$ of the mixing matrix). Furthermore, we characterize the transient time needed for DSGD to achieve the optimal rate of convergence, {\sp which behaves as $\mathcal{O}\left(\frac{n}{(1-\rho_w)^2}\right)$ assuming certain conditions on the objective functions, stepsize policy and initial solutions.}
	Finally, we construct a ``hard" optimization problem for which we show the transient time needed for DSGD to approach the asymptotic convergence rate is lower bounded by $\Omega \left(\frac{n}{(1-\rho_w)^2} \right)$, implying the obtained transient time is sharp.
	These results are new to the best of our knowledge.
	
	\subsection{Related Works}
	
	We briefly discuss the related literature on (distributed) stochastic optimization.
	First of all, our work is related to stochastic approximation (SA) methods dating back to the seminal works~\cite{robbins1951stochastic} and~\cite{kiefer1952stochastic}. 
	For a strongly convex objective function $f$ with Lipschitz continuous gradients, it has been shown that the optimal convergence rate for solving problem (\ref{opt Problem_def})  is $\mathcal{O}(\frac{1}{k})$ under a diminishing stepsize policy \cite{nemirovski2009robust}.
	
	Distributed stochastic gradient methods have received much attention in the recent years.
	For nonsmooth convex objective functions, the work in \cite{srivastava2011distributed} considered distributed constrained optimization and established asymptotic convergence to the optimal set using two diminishing stepsizes to account for communication noise and subgradient errors, respectively.
	The paper \cite{duchi2012dual} proposed a distributed dual averaging method which exhibits a convergence rate of $\mathcal{O}(\frac{n\log k}{(1-\lambda_2(\mW))\sqrt{k}})$ under a carefully chosen SA stepsize sequence, where $\lambda_2(\mW)$ is the second largest singular value of the mixing matrix $\mW$. 
	A projected stochastic gradient algorithm was considered in \cite{bianchi2013convergence} for solving nonconvex optimization problems by combining a local stochastic gradient update and a gossip step. This work proved that consensus is asymptotically achieved and the solutions converge to the	set of KKT points with SA stepsizes.
	In \cite{towfic2014adaptive}, the authors proposed an adaptive diffusion algorithm based on penalty methods and showed that the expected optimization error is bounded by $\mathcal{O}(\alpha)$ under a constant stepsize $\alpha$.
	The work in \cite{chatzipanagiotis2016distributed} considered distributed constrained convex optimization under multiple noise terms in both computation and communication stages.
	By means of an augmented Lagrangian framework, almost sure convergence with a diminishing stepsize policy was established. \cite{nedic2016stochastic} investigated a subgradient-push method for distributed optimization over time-varying directed graphs. For strongly convex objective functions, the method exhibits an $\mathcal{O}(\frac{\ln k}{k})$ convergence rate.
	The work in \cite{sayin2017stochastic} used a time-dependent weighted mixing of stochastic subgradient updates to achieve an $\mathcal{O}(\frac{n\sqrt{n}}{(1-\lambda_2(\mW))k})$ convergence rate for minimizing the sum of  nonsmooth strongly convex functions. 
	\cite{lan2017communication} presented a new class of distributed first-order methods for nonsmooth and stochastic optimization which was shown to exhibit an $\mathcal{O}(\frac{1}{k})$ (respectively, $\mathcal{O}(\frac{1}{\sqrt{k}})$) convergence rate for minimizing the sum of strongly convex functions (respectively, convex functions). 
	The work in \cite{sirb2018decentralized} considered a decentralized algorithm with delayed gradient information which achieves an $\mathcal{O}(\frac{1}{\sqrt{k}})$ rate of convergence for general convex functions. In \cite{jakovetic2018convergence}, an $\mathcal{O}(\frac{1}{k})$ convergence rate was established for strongly convex costs and random networks. Recently, the work in \cite{xin2019variance} proposed a variance-reduced decentralized stochastic optimization method with gradient tracking.
	
	Several recent works 
	have shown that distributed methods may compare with centralized algorithms under various conditions.
	In addition to \cite{chen2012limiting,chen2015learning,chen2015learning2} discussed before, \cite{morral2014success,morral2017success} proved that distributed stochastic approximation performs asymptotically
	as well as centralized schemes by means of a central limit theorem. \cite{towfic2016excess} first showed that a distributed stochastic
	gradient algorithm asymptotically achieves comparable convergence rate to a centralized method, but assuming that all the local functions $f_i$ have the same minimum. 
	\cite{pu2017flocking,pu2018swarming} demonstrated the advantage of distributively implementing a stochastic gradient method assuming that sampling times are random and non-negligible.  For nonconvex objective functions, \cite{lian2017can} proved that decentralized algorithms can achieve a linear speedup similar to a centralized algorithm when $k$ is large enough. This result was generalized to the setting of directed communication networks in \cite{assran2019stochastic} for training deep neural networks.
	The work in \cite{pu2020distributed} considered a distributed stochastic gradient tracking method which performs as well as centralized stochastic gradient descent under a small enough constant stepsize.
	A recent paper \cite{spiridonoff2020robust} discussed an algorithm that asymptotically performs as well as the best bounds on centralized stochastic gradient descent subject to possible message losses, delays, and asynchrony. In a parallel recent work \cite{koloskova2019decentralized}, a similar result was demonstrated with a further compression technique which allowed nodes to save on communication. For more discussion on the topic of achieving asymptotic network independence in distributed stochastic optimization, the readers are referred to a recent survey \cite{pu2019asymptotic} .
	
	\subsection{Main Contribution}
	We next summarize the main contribution of the paper.
	First, we begin by performing a non-asymptotic convergence analysis of the distributed stochastic gradient descent (DSGD) method.
	For strongly convex and smooth objective functions, DSGD asymptotically achieves the optimal network independent convergence rate compared to centralized stochastic gradient descent (SGD). We explicitly identify the non-asymptotic convergence rate as a function of characteristics of the objective functions and the network. The relevant results are established in Corollary \ref{cor: U(k)_general_step} and Theorem \ref{Thm: best rate}.
	
	Our main contribution is to characterize the transient time needed for DSGD to approach the asymptotic convergence rate. On the one hand, we show an upper bound of  $K_T=\mathcal{O}\left(\frac{n}{(1-\rho_w)^2}\right)$, where $1-\rho_w$ denotes the spectral gap of the mixing matrix of communicating agents. 
	On the other hand, we construct a ``hard" optimization problem for which we show that the transient time needed for DSGD to approach the asymptotic convergence rate is lower bounded by $\Omega\left(\frac{n}{(1-\rho_w)^2}\right)$, implying that this upper bound is sharp.
	
	Additionally, we provide numerical experiments that demonstrate the tightness of the theoretical findings. In particular, for the ring network topology and the square grid network topology, simulations are consistent with the transient time $K_T=\mathcal{O}\left(\frac{n}{(1-\rho_w)^2}\right)$ for solving the \emph{on-line} ridge regression problem.
	
	\subsection{Notation}
	\label{subsec:pre}
	Vectors are column vectors unless otherwise specified.
	Each agent $i$ holds a local copy of the decision vector denoted by $x_i\in\mathbb{R}^p$, and its value at iteration/time $k$ is written as $x_i(k)$. 
	Let
	\begin{equation*}
		\mx := [x_1, x_2, \ldots, x_n]^{\T}\in\mathbb{R}^{n\times p},\quad
		\ox :=  \sp{\frac{1}{n}\sum_{i=1}^n x_i},
	\end{equation*}
	where 
	$\T$ denotes transpose.
	Define an aggregate objective function
	\begin{equation*}
		F(\mx):=\sum_{i=1}^nf_i(x_i),
	\end{equation*}
	and let
	\begin{equation*}
		\nabla F(\mx):=\left[\nabla f_1(x_1), \nabla f_2(x_2), \ldots, \nabla f_n(x_n)\right]^{\T}\in\mathbb{R}^{n\times p},
	\end{equation*}
	\begin{equation*}
		\bar{\nabla} F(\mx):= \sp{\frac{1}{n}\sum_{i=1}^n \nabla f_i(x_i)}.
	\end{equation*}
	In addition, we denote
	\begin{equation*}
		\boldsymbol{\xi} := [\xi_1, \xi_2, \ldots, \xi_n]^{\T}\in\mathbb{R}^{n\times p},
	\end{equation*}
	\begin{equation*}
		\mg(\mx,\boldsymbol{\xi}):=[g_1(x_1,\xi_1), g_2(x_2,\xi_2), \ldots, g_n(x_n,\xi_n)]^{\T}\in\mathbb{R}^{n\times p}.
	\end{equation*}
	In what follows we write $g_i(k):=g_i(x_i(k),\xi_i(k))$ and $\mg(k):=\mg(\mx(k),\boldsymbol{\xi}(k))$ for short.
	
	The inner product of two vectors $a,b$ is written as $\langle a,b\rangle$. For two matrices $\mA,\mathbf{B}\in\mathbb{R}^{n\times p}$, let
	$\langle \mA,\mathbf{B}\rangle :=\sum_{i=1}^n\langle A_i,B_i\rangle$,
	where $A_i$ (respectively, $B_i$) is the $i$-th row of $\mA$ (respectively, $\mathbf{B}$). We use $\|\cdot\|$ to denote the $2$-norm of vectors and the Frobenius norm of matrices.
	
	A graph $\mathcal{G}=(\mathcal{N},\mathcal{E})$ has a set of vertices (nodes) $\mathcal{N}=\{1,2,\ldots,n\}$ and a set of edges connecting vertices $\mathcal{E}\subseteq \mathcal{N}\times \mathcal{N}$. Suppose agents interact in an undirected graph, i.e., $(i,j)\in\mathcal{E}$ if and only if $(j,i)\in\mathcal{E}$.
	
	Denote the mixing matrix of agents by $\mW=[w_{ij}]\in\mathbb{R}^{n\times n}$. Two agents $i$ and $j$  are connected if and only if $w_{ij}, w_{ji}>0$ ($w_{ij}=w_{ji}=0$ otherwise). Formally, we make the following assumption on the communication among agents.
	\begin{assumption}
		\label{asp: network}
		The graph $\mathcal{G}$ is undirected and connected (there exists a path between any two nodes). The mixing matrix $\mW$ is nonnegative and doubly stochastic, 
		i.e., $\mW\mathbf{1}=\mathbf{1}$ and $\mathbf{1}^{\T}\mW=\mathbf{1}^{\T}$. 
	\end{assumption}
	From Assumption \ref{asp: network}, we have the following contraction property of $\mW$ (see \cite{qu2017harnessing}).
	\begin{lemma}
		\label{lem: spectral norm}
		Let Assumption \ref{asp: network} hold, and let $\rho_w$ denote the spectral norm of 
		the matrix $\mW-\frac{1}{n}\mathbf{1}\mathbf{1}^{\T}$. Then, $\rho_w<1$ and 
		\begin{equation*}
			\|\mW\boldsymbol{\omega}-\mathbf{1}\overline{\omega}\|\le \rho_w\|\boldsymbol{\omega}-\mathbf{1}\overline{\omega}\|
		\end{equation*}
		for all $\boldsymbol{\omega}\in\mathbb{R}^{n\times p}$, where $\overline{\omega} := \frac{1}{n}\mathbf{1}^{\T}\boldsymbol{\omega}$.
	\end{lemma}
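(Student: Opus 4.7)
The plan is to split the lemma into a short algebraic reduction that yields the inequality once $\rho_w$ is given, and a spectral argument that guarantees $\rho_w<1$.

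First, I would introduce $J:=\tfrac{1}{n}\mathbf{1}\mathbf{1}^{\T}$, the orthogonal projector onto $\mathrm{span}\{\mathbf{1}\}$, and collect the identities that double stochasticity forces on it. From $\mW\mathbf{1}=\mathbf{1}$ and $\mathbf{1}^{\T}\mW=\mathbf{1}^{\T}$ one gets $\mW J=J\mW=J$ and $J^{2}=J$, so $(\mW-J)J=0$ and $(\mW-J)\mathbf{1}=0$. Since $J\boldsymbol{\omega}=\mathbf{1}\overline{\omega}$, the two key rewrites
\[
\mW\boldsymbol{\omega}-\mathbf{1}\overline{\omega}=(\mW-J)\boldsymbol{\omega}=(\mW-J)(\boldsymbol{\omega}-\mathbf{1}\overline{\omega})
\]
follow mechanically, and applying the spectral norm of $\mW-J$ column by column (so that the induced matrix-to-Frobenius bound holds) yields
\[
\|\mW\boldsymbol{\omega}-\mathbf{1}\overline{\omega}\|\le\|\mW-J\|_{2}\,\|\boldsymbol{\omega}-\mathbf{1}\overline{\omega}\|=\rho_w\,\|\boldsymbol{\omega}-\mathbf{1}\overline{\omega}\|,
\]
which is the claimed bound.

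The substantive step is then $\rho_w<1$. In the undirected, connected, doubly stochastic setup used here, together with the customary symmetry of $\mW$ in this line of work, $\mW-J$ is symmetric, and decomposing $\mathbb{R}^{n}=\mathrm{span}\{\mathbf{1}\}\oplus\mathbf{1}^{\perp}$ shows that $\mW-J$ acts as zero on $\mathrm{span}\{\mathbf{1}\}$ and agrees with $\mW$ on $\mathbf{1}^{\perp}$. Thus $\rho_w$ equals the second largest eigenvalue of $\mW$ in absolute value, and it suffices to exclude $\pm 1$ from that set. Irreducibility of $\mW$ (from graph connectivity, Assumption~\ref{asp: network}) together with Perron--Frobenius makes $1$ a simple eigenvalue, while the standard aperiodicity input (e.g.\ $w_{ii}>0$ for all $i$, under which $\mW^{k}\to J$) excludes $-1$, giving $\rho_w<1$.

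The main obstacle is really this last step: the algebraic reduction is a one-liner once $J$ is introduced and one uses that $\mW-J$ annihilates both $\mathbf{1}$ and the range of $J$, but strict inequality $\rho_w<1$ requires using the graph structure beyond just double stochasticity---namely irreducibility from connectedness \emph{and} enough aperiodicity to rule out a $-1$ eigenvalue. Everything else is a direct manipulation of the projector identities for $J$, which is why papers in this area typically import the spectral bound as a named lemma (here from \cite{qu2017harnessing}) rather than reprove it in place.
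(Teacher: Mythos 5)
The paper never proves this lemma in place --- it is imported verbatim from \cite{qu2017harnessing} --- so your self-contained argument is necessarily a different route, and it is the standard one. The algebraic half is airtight: with $J=\tfrac{1}{n}\mathbf{1}\mathbf{1}^{\T}$, double stochasticity gives $\mW J=J\mW=J=J^2$, hence $\mW\boldsymbol{\omega}-\mathbf{1}\overline{\omega}=(\mW-J)(\boldsymbol{\omega}-\mathbf{1}\overline{\omega})$, and applying the spectral norm column-by-column to the $n\times p$ matrix gives the Frobenius-norm contraction with constant $\rho_w=\|\mW-J\|_2$. For the strict inequality $\rho_w<1$ you correctly identify the real content: Assumption~\ref{asp: network} as literally stated (nonnegative, doubly stochastic, connected, undirected) does \emph{not} suffice --- the two-node swap $\mW=\bigl(\begin{smallmatrix}0&1\\1&0\end{smallmatrix}\bigr)$, or any bipartite graph with zero diagonal, satisfies it yet has $\rho_w=1$ --- so some aperiodicity input such as $w_{ii}>0$ is genuinely needed, and symmetry (or some substitute) is needed to pass from eigenvalues to the spectral norm. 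These extra hypotheses live in the cited reference rather than in the paper's Assumption~\ref{asp: network}, so your flagging them is a feature, not a gap in your reasoning. One refinement if you want to drop the symmetry assumption: since $\rho_w^2=\lambda_{\max}\bigl((\mW-J)^{\T}(\mW-J)\bigr)=\lambda_{\max}(\mW^{\T}\mW-J)$ and $\mW^{\T}\mW$ is symmetric, doubly stochastic, has positive diagonal when $w_{ii}>0$, and its graph contains that of $\mW$, the same Perron--Frobenius/primitivity argument applied to $\mW^{\T}\mW$ yields $\rho_w<1$ without assuming $\mW=\mW^{\T}$.
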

	
	The rest of this paper is organized as follows. We present the DSGD algorithm and some preliminary results in Section~\ref{sec: DSGD}. In Section \ref{sec: analysis} we prove the sublinear convergence rate of the algorithm. Our main convergence results and a comparison with the centralized stochastic gradient method are in Section~\ref{sec: main_results}. {\sp Two numerical example are} presented in Section \ref{sec: numerical}, and we conclude the paper in Section \ref{sec: conclusions}.
	
	\section{Distributed Stochastic Gradient Descent}
	\label{sec: DSGD}
	
	We consider the following DSGD method adapted from DGD and the diffusion strategy \cite{chen2012diffusion}: at each step $k\ge 0$, 
	every agent $i$ independently performs the update:
	\begin{equation}
		\label{eq: x_i,k}
		x_i(k+1) = \sum_{j=1}^{n}w_{ij}\left(x_j(k)-\alpha_k g_j(k)\right),
	\end{equation}
	where $\{\alpha_k\}$ is a sequence of non-increasing stepsizes. {\sp The particular choice of the stepsize sequence will be introduced in Section \ref{sec: analysis}.} The {\sp initial vectors $x_{i}(0)$} are arbitrary for all~$i\in{\cal N}$.
	We can rewrite (\ref{eq: x_i,k}) in the following compact form:
	\begin{equation}
		\label{eq: x_k}
		\mx(k+1) = \mW\left(\mx(k)-\alpha_k \mg(k)\right).
	\end{equation}
	
	Throughout the paper, we make the following standing assumption regarding the objective functions $f_i$.\footnote{The assumption can be generalized to the case where the agents have different $\mu$ and $L$.} {\sp These assumptions are satisfied for many machine learning problems, such as linear regression, smooth support vector machine (SVM), logistic regression, and softmax regression.}
	\begin{assumption}
		\label{asp: mu-L_convexity}
		Each $f_i:\mathbb{R}^p\rightarrow \mathbb{R}$ is  $\mu$-strongly convex with $L$-Lipschitz continuous gradients, i.e., for any $x,x'\in\mathbb{R}^p$,
		\begin{equation}
			\begin{split}
				& \langle \nabla f_i(x)-\nabla f_i(x'),x-x'\rangle\ge \mu\|x-x'\|^2,\\
				& \|\nabla f_i(x)-\nabla f_i(x')\|\le L \|x-x'\|.
			\end{split}
		\end{equation}
	\end{assumption}
	Under Assumption \ref{asp: mu-L_convexity}, Problem (\ref{opt Problem_def}) has a unique {\sp optimal solution $x_*\in\mathbb{R}^p$}, and the following result holds (see \cite{qu2017harnessing} Lemma 10).
	\begin{lemma}
		\label{lem: contraction_mu-L_convexity}
		For any $x\in\mathbb{R}^p$ and $\alpha\in(0,2/L)$, we have
		\begin{equation*}
			\|x-\alpha\nabla f(x)-x_*\|\le \lambda\|x-x_*\|,
		\end{equation*}
		where $\lambda=\max(|1-\alpha\mu|,|1-\alpha L|)$.
	\end{lemma}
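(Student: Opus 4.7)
The plan is to study the gradient-step map $T_\alpha(x) := x - \alpha\nabla f(x)$, observe that $T_\alpha(x_*) = x_*$ because $\nabla f(x_*) = 0$, and show directly that $\|T_\alpha(x) - T_\alpha(x_*)\| \le \lambda \|x - x_*\|$. Since $f$ is the average of $\mu$-strongly convex, $L$-smooth functions, it is itself $\mu$-strongly convex and $L$-smooth, so the analysis reduces to a property of a single such function. The natural first attempt is to expand
\begin{equation*}
\|T_\alpha(x) - x_*\|^2 = \|x - x_*\|^2 - 2\alpha \langle \nabla f(x), x - x_* \rangle + \alpha^2 \|\nabla f(x)\|^2
\end{equation*}
and apply the classical combined inequality $\langle \nabla f(x) - \nabla f(y), x - y\rangle \ge \tfrac{\mu L}{\mu+L}\|x-y\|^2 + \tfrac{1}{\mu+L}\|\nabla f(x) - \nabla f(y)\|^2$. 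However, this only delivers the contraction factor $1 - \alpha\mu$ for $\alpha \le 2/(\mu+L)$, so it does not cover all of $(0, 2/L)$.

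To cover the full range I would split the proof into two regimes. In the first regime, $\alpha \in (0, 2/(\mu+L)]$, decompose $f = (\mu/2)\|\cdot\|^2 + g$, where $g$ is convex with $(L-\mu)$-Lipschitz gradient. Then $T_\alpha(x) - x_* = (1 - \alpha\mu)(x - x_*) - \alpha(\nabla g(x) - \nabla g(x_*))$; expanding the squared norm and invoking co-coercivity of $\nabla g$ (Baillon--Haddad), the coefficient of $\|\nabla g(x) - \nabla g(x_*)\|^2$ becomes non-positive exactly when $\alpha \le 2/(\mu+L)$, leaving $\|T_\alpha(x) - x_*\|^2 \le (1-\alpha\mu)^2 \|x-x_*\|^2$. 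In the second regime, $\alpha \in [2/(\mu+L), 2/L)$, dually write $f = (L/2)\|\cdot\|^2 - h$, where $h$ is again convex and $(L-\mu)$-smooth, so that $T_\alpha(x) - x_* = (1 - \alpha L)(x - x_*) + \alpha(\nabla h(x) - \nabla h(x_*))$. Because $1 - \alpha L \le 0$ here, the cross term carries the opposite sign, and co-coercivity of $\nabla h$ now makes the coefficient of $\|\nabla h(x) - \nabla h(x_*)\|^2$ non-positive precisely when $\alpha \ge 2/(\mu+L)$, producing $\|T_\alpha(x) - x_*\|^2 \le (1-\alpha L)^2 \|x-x_*\|^2$. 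A short case check then confirms that $\lambda = \max(|1-\alpha\mu|, |1-\alpha L|)$ agrees with $1-\alpha\mu$ throughout the first regime and with $|1-\alpha L|$ throughout the second.

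The main obstacle is covering the upper half of the stepsize interval: the standard convex-analysis toolkit yields the contraction rate $1-\alpha\mu$ naturally only up to $\alpha = 2/(\mu+L)$, and it is the dual decomposition $f = (L/2)\|\cdot\|^2 - h$, together with the fact (established via Baillon--Haddad or a Hessian bound under twice differentiability) that $h$ is convex and $(L-\mu)$-smooth, that pushes the contraction all the way to $\alpha < 2/L$. The remaining steps are routine expansions and a sign check at $\alpha = 2/(\mu+L)$ where the two regimes meet.
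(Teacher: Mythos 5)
Your argument is correct: in both regimes the algebra works out (for $\alpha\le 2/(\mu+L)$ the coefficient of $\|\nabla g(x)-\nabla g(x_*)\|^2$ is $\alpha\bigl(\alpha-\tfrac{2(1-\alpha\mu)}{L-\mu}\bigr)\le 0$, and for $\alpha\ge 2/(\mu+L)$, using $1-\alpha L\le 0$, the coefficient of $\|\nabla h(x)-\nabla h(x_*)\|^2$ is $\tfrac{\alpha}{L-\mu}\bigl(2-\alpha(L+\mu)\bigr)\le 0$), and the final case check on $\lambda$ is right; you should only add a remark for the degenerate case $L=\mu$, where $L-\mu$ appears in denominators but the terms involving $\nabla g$, $\nabla h$ vanish identically. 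Note, however, that the paper does not prove this lemma at all: it simply cites it (Lemma 10 of the reference \cite{qu2017harnessing}), so your contribution is a self-contained proof rather than a variant of the paper's. Also, your opening claim that the combined inequality $\langle\nabla f(x)-\nabla f(y),x-y\rangle\ge\tfrac{\mu L}{\mu+L}\|x-y\|^2+\tfrac{1}{\mu+L}\|\nabla f(x)-\nabla f(y)\|^2$ ``does not cover all of $(0,2/L)$'' is not quite accurate: expanding $\|x-\alpha\nabla f(x)-x_*\|^2$, applying that inequality, and then bounding the leftover $\|\nabla f(x)-\nabla f(x_*)\|^2$ term from below by $\mu^2\|x-x_*\|^2$ when $\alpha\le 2/(\mu+L)$ (its coefficient is then nonpositive) and from above by $L^2\|x-x_*\|^2$ when $\alpha\ge 2/(\mu+L)$ gives exactly $(1-\alpha\mu)^2$ and $(1-\alpha L)^2$ on the two subintervals, covering the whole range $(0,2/L)$ in a few lines and without the auxiliary decompositions. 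Your Baillon--Haddad route is a legitimate alternative, but the standard argument is shorter and avoids the $L=\mu$ edge case.
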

	
	{\sp Denote $\og(k):= \frac{1}{n}\sum_{i=1}^n g_i(k)$.} The following two results are useful for our analysis.
	\begin{lemma}
		\label{lem: oy_k-h_k}
		Under {\sp Assumptions \ref{asp: gradient samples} and \ref{asp: mu-L_convexity}, 
			for all $k\ge0$,
			\begin{multline*}
				\bE\left[\left\|\og(k)- \bar{\nabla} F(\mx(k))\right\|^2\mid \mF(k)\right] \\
				\le \frac{2ML^2}{n^2}\|\mx(k)-\mathbf{1}x_*^{\T}\|^2+\frac{\bar{M}}{n},
			\end{multline*}
			where
			\begin{equation*}
				\bar{M} := \frac{2M\sum_{i=1}^n\|\nabla f_i(x_*)\|^2}{n}+\sigma^2.
			\end{equation*}
		}
	\end{lemma}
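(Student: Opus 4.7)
The plan is to compute the conditional second moment directly by expanding the average, exploiting conditional independence across agents, and then controlling the local variance bound via the strong convexity/Lipschitz gradient structure.

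First I would write
\begin{equation*}
\og(k) - \bar{\nabla} F(\mx(k)) = \frac{1}{n}\sum_{i=1}^n \bigl(g_i(k) - \nabla f_i(x_i(k))\bigr).
\end{equation*}
By Assumption \ref{asp: gradient samples}, each summand is conditionally mean-zero given $\mF(k)$, and the $\xi_i(k)$ are independent across $i$; moreover $x_i(k)$ is $\mF(k)$-measurable. Hence the cross terms in the expansion of the squared norm vanish in conditional expectation, leaving
\begin{equation*}
\bE\!\left[\|\og(k)-\bar{\nabla} F(\mx(k))\|^2 \,\big|\, \mF(k)\right]
= \frac{1}{n^2}\sum_{i=1}^n \bE\!\left[\|g_i(k)-\nabla f_i(x_i(k))\|^2 \,\big|\, \mF(k)\right].
\end{equation*}

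Next I would apply the variance bound from Assumption \ref{asp: gradient samples} term-by-term to obtain
\begin{equation*}
\bE\!\left[\|\og(k)-\bar{\nabla} F(\mx(k))\|^2 \,\big|\, \mF(k)\right] \le \frac{\sigma^2}{n} + \frac{M}{n^2}\sum_{i=1}^n \|\nabla f_i(x_i(k))\|^2.
\end{equation*}
To turn the right-hand side into the claimed expression involving $\|\mx(k)-\mathbf{1}x_*^{\T}\|^2$, I would split $\nabla f_i(x_i(k)) = \bigl[\nabla f_i(x_i(k))-\nabla f_i(x_*)\bigr] + \nabla f_i(x_*)$ and use the elementary inequality $\|a+b\|^2 \le 2\|a\|^2 + 2\|b\|^2$ together with the Lipschitz gradient part of Assumption \ref{asp: mu-L_convexity}:
\begin{equation*}
\|\nabla f_i(x_i(k))\|^2 \le 2L^2 \|x_i(k)-x_*\|^2 + 2\|\nabla f_i(x_*)\|^2.
\end{equation*}
Summing over $i$ recovers $\sum_i\|x_i(k)-x_*\|^2 = \|\mx(k)-\mathbf{1}x_*^{\T}\|^2$.

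Combining these pieces and regrouping constants into $\bar{M} = \frac{2M\sum_{i=1}^n\|\nabla f_i(x_*)\|^2}{n} + \sigma^2$ yields the stated bound. I do not anticipate a real obstacle here: the only subtle point is justifying the vanishing of cross-terms, which follows because each $x_i(k)$ is $\mF(k)$-measurable and the $\xi_i(k)$ are mutually independent given $\mF(k)$, so the conditional covariance between distinct summands is zero.
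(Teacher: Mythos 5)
Your proposal is correct and follows essentially the same route as the paper's proof: expand the average, use conditional independence across agents to drop cross terms, apply the per-agent variance bound from Assumption \ref{asp: gradient samples}, and then control $\|\nabla f_i(x_i(k))\|^2$ by $2L^2\|x_i(k)-x_*\|^2+2\|\nabla f_i(x_*)\|^2$ via the Lipschitz-gradient part of Assumption \ref{asp: mu-L_convexity}. No gaps; your explicit justification of the vanishing cross terms is, if anything, slightly more careful than the paper's.
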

	\begin{proof}
		By definitions of $\og(k)$, $\bar{\nabla} F(\mx(k))$ and Assumption \ref{asp: gradient samples}, we have
		\begin{align*}
			&\bE\left[\left\|\og(k)- \bar{\nabla} F(\mx(k))\right\|^2\right]\\
			& \quad = \sp{\bE\left[\left\|\frac{1}{n}\sum_{i=1}^n g_i(k)- \frac{1}{n}\sum_{i=1}^n \nabla f_i(x_i(k))\right\|^2\right]}\\
			& \quad = \frac{1}{n^2}\sum_{i=1}^n\bE\left[\|g_i(k)-\nabla f_i(x_i(k))\|^2\right] \\
			& \quad \le \frac{\sigma^2}{n}{\sp +\frac{M\sum_{i=1}^n \|\nabla f_i(x_i(k))\|^2}{n^2}}.
		\end{align*}
		{\sp Notice that $\|\nabla f_i(x_i(k))\|^2=\|\nabla f_i(x_i(k))-\nabla f_i(x_*)+\nabla f_i(x_*)\|^2\le 2\|\nabla f_i(x_i(k))-\nabla f_i(x_*)\|^2+2\|\nabla f_i(x_*)\|^2\le 2L^2\| x_i(k)- x_*\|^2+2\|\nabla f_i(x_*)\|^2$ from Assumption \ref{asp: mu-L_convexity}. We have
			\begin{multline*}
				\bE\left[\left\|\og(k)- \bar{\nabla} F(\mx(k))\right\|^2\right] \le \frac{2ML^2}{n^2}\|\mx(k)-\mathbf{1}x_*^{\T}\|^2\\
				+\frac{1}{n}\left(\frac{2M\sum_{i=1}^n\|\nabla f_i(x_*)\|^2}{n}+\sigma^2\right).
			\end{multline*}
		}
	\end{proof}
	\begin{lemma}
		\label{lem: strong_convexity}
		Under Assumption \ref{asp: mu-L_convexity}, for all $k\ge0$,
		\begin{align}
			\left\| \nabla f(\ox(k))-\bar{\nabla} F(\mx(k))\right\| \le \frac{L}{\sqrt{n}}\|\mx(k)-\mathbf{1}\ox(k)^{\T}\|.
		\end{align}
	\end{lemma}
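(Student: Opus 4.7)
The plan is to expand both terms as simple averages and then apply Jensen's inequality followed by the Lipschitz gradient property row by row.

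First I would rewrite the difference using the definitions introduced in Section~\ref{subsec:pre}:
\begin{equation*}
\nabla f(\ox(k))-\bar{\nabla} F(\mx(k))=\frac{1}{n}\sum_{i=1}^n\bigl(\nabla f_i(\ox(k))-\nabla f_i(x_i(k))\bigr).
\end{equation*}
This is the only nontrivial observation: $\nabla f(\ox(k))$ can be written as $\tfrac{1}{n}\sum_i\nabla f_i(\ox(k))$ because $f=\tfrac{1}{n}\sum_i f_i$, so the two quantities differ only in whether $\nabla f_i$ is evaluated at the average $\ox(k)$ or at the local iterate $x_i(k)$.

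Next, I would apply Jensen's inequality (equivalently, Cauchy--Schwarz) to the squared norm of the average, giving
\begin{equation*}
\left\|\frac{1}{n}\sum_{i=1}^n\bigl(\nabla f_i(\ox(k))-\nabla f_i(x_i(k))\bigr)\right\|^2\le \frac{1}{n}\sum_{i=1}^n\left\|\nabla f_i(\ox(k))-\nabla f_i(x_i(k))\right\|^2.
\end{equation*}
Then applying the $L$-Lipschitz bound from Assumption~\ref{asp: mu-L_convexity} term by term yields $\|\nabla f_i(\ox(k))-\nabla f_i(x_i(k))\|^2\le L^2\|\ox(k)-x_i(k)\|^2$. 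Summing and recognizing that $\sum_{i=1}^n\|x_i(k)-\ox(k)\|^2=\|\mx(k)-\mathbf{1}\ox(k)^{\T}\|^2$ (the Frobenius norm used throughout the paper) gives $\tfrac{L^2}{n}\|\mx(k)-\mathbf{1}\ox(k)^{\T}\|^2$. Taking square roots yields the claimed inequality.

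There is no real obstacle here; this is a standard consensus-error bound on the gradient discrepancy. The only small point to watch is using Jensen's inequality on the average (which introduces the factor $1/n$ in the numerator and cancels one factor of $n$ from the sum), so that the final factor is $L/\sqrt{n}$ rather than $L$ or $L\sqrt{n}$.
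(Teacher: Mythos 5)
Your proof is correct and follows essentially the same route as the paper: both expand the difference as $\frac{1}{n}\sum_i(\nabla f_i(\ox(k))-\nabla f_i(x_i(k)))$, apply the Lipschitz bound agent by agent, and convert the resulting sum into the Frobenius norm with a factor $\sqrt{n}$. The only cosmetic difference is that you use Jensen on the squared norm while the paper uses the triangle inequality followed by H\"{o}lder's inequality; these are equivalent steps.
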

	\begin{proof}
		By definition,
		\begin{align*}
			& \left\| \nabla f(\ox(k))-\bar{\nabla} F(\mx(k))\right\| \\
			& \quad =  \left\|\frac{1}{n}\sum_{i=1}^n\nabla f_i(\ox(k))-\frac{1}{n}\sum_{i=1}^n\nabla f_i(x_i(k))\right\|\\
			\text{(Assumption \ref{asp: mu-L_convexity})} & \quad \le \frac{L}{n}\sum_{i=1}^n\|\ox(k)-x_i(k)\|\\
			& \quad  \le \frac{L}{\sqrt{n}}\|\mx(k)-\mathbf{1}\ox(k)^{\T}\|,
		\end{align*}
		where the last relation follows from {\sp H\"{o}lder's} inequality.
	\end{proof}

	\subsection{Preliminary Results}
	\label{subsec: pre_results}
	
	In this section, we present some preliminary results concerning $\bE[\|\ox(k)-x_*\|^2]$ (expected optimization error) and $\bE[\|\mx(k)-\mathbf{1}\ox(k)^{\T}\|^2]$ (expected consensus error). Specifically, we bound the two terms by linear combinations of their values in the last iteration.
	
	{\sp For ease of presentation,} for all $k$ we denote
	\begin{equation*}
		U(k):=\bE\left[\|\ox(k)-x_*\|^2\right],\, V(k):=\bE[\|\mx(k)-\mathbf{1}\ox(k)^{\T}\|^2].
	\end{equation*}
	{\sp In the lemma below, we bound the optimization error $U(k+1)$ by several error terms at iteration $k$, including the consensus error $V(k)$. It serves as a starting point for the follow-up analysis.}
	\begin{lemma}
		\label{lem: optimization_error_contraction_pre}
		{\sp Suppose Assumptions \ref{asp: gradient samples}-\ref{asp: mu-L_convexity} hold.} Under Algorithm \eqref{eq: x_k}, {\sp supposing $\alpha_k\le \frac{1}{L}$, we have
			\begin{align}
				\label{Opt_error_pre}
				& U(k+1)	\le (1-\alpha_k\mu)^2 U(k) \notag\\
				& \quad +\frac{2\alpha_k L}{\sqrt{n}}(1-\alpha_k\mu)\bE[\|\ox(k)-x_*\|\|\mx(k)-\mathbf{1}\ox(k)^{\T}\|] \notag\\
				& \quad +\frac{\alpha_k^2L^2}{n}V(k) +\alpha_k^2\sp{\left(\frac{2ML^2}{n^2}\bE[\|\mx(k)-\mathbf{1}x_*^{\T}\|^2]+\frac{\bar{M}}{n}\right)}.
		\end{align}}
	\end{lemma}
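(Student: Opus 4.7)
The plan is to derive the averaged recursion for $\ox(k)$ first, decompose the stochastic gradient into its conditional mean plus a zero-mean noise term, and then apply the three lemmas already in hand: Lemma~\ref{lem: contraction_mu-L_convexity} for the contraction of the averaged deterministic gradient step, Lemma~\ref{lem: strong_convexity} for the discrepancy between $\nabla f(\ox(k))$ and $\bar\nabla F(\mx(k))$, and Lemma~\ref{lem: oy_k-h_k} for the conditional variance of $\og(k)$.

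First I would left-multiply the update \eqref{eq: x_k} by $\frac{1}{n}\mathbf{1}^\T$ and use the doubly stochastic property $\mathbf{1}^\T \mW = \mathbf{1}^\T$ from Assumption~\ref{asp: network} to obtain $\ox(k+1) = \ox(k) - \alpha_k \og(k)$. Subtracting $x_*$ and adding and subtracting $\alpha_k \bar\nabla F(\mx(k))$ and $\alpha_k \nabla f(\ox(k))$ gives the decomposition
\begin{equation*}
\ox(k+1)-x_* = \underbrace{\bigl(\ox(k)-x_*-\alpha_k\nabla f(\ox(k))\bigr)}_{=: T_1} + \underbrace{\alpha_k\bigl(\nabla f(\ox(k))-\bar\nabla F(\mx(k))\bigr)}_{=: T_2} - \underbrace{\alpha_k\bigl(\og(k)-\bar\nabla F(\mx(k))\bigr)}_{=: T_3}.
\end{equation*}
By Assumption~\ref{asp: gradient samples}, $\bE[T_3 \mid \mF(k)] = 0$ and $T_1, T_2$ are $\mF(k)$-measurable, so taking conditional expectation of the squared norm yields $\bE[\|\ox(k+1)-x_*\|^2\mid\mF(k)] = \|T_1+T_2\|^2 + \alpha_k^2\,\bE[\|\og(k)-\bar\nabla F(\mx(k))\|^2\mid\mF(k)]$.

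For $\|T_1+T_2\|^2$, the triangle inequality gives $\|T_1+T_2\|\le\|T_1\|+\|T_2\|$. Lemma~\ref{lem: contraction_mu-L_convexity} applied to the average gradient step (note that under $\alpha_k\le 1/L$ and $\mu\le L$ the constant $\lambda=\max(|1-\alpha_k\mu|,|1-\alpha_k L|)$ equals $1-\alpha_k\mu$, which is the only place the stepsize restriction is needed) bounds $\|T_1\|\le(1-\alpha_k\mu)\|\ox(k)-x_*\|$, while Lemma~\ref{lem: strong_convexity} bounds $\|T_2\|\le \frac{\alpha_k L}{\sqrt n}\|\mx(k)-\mathbf{1}\ox(k)^\T\|$. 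Squaring and expanding produces exactly the first three terms on the right-hand side of \eqref{Opt_error_pre}, namely $(1-\alpha_k\mu)^2\|\ox(k)-x_*\|^2$, the cross term with coefficient $\tfrac{2\alpha_k L}{\sqrt n}(1-\alpha_k\mu)$, and the consensus-error term with coefficient $\tfrac{\alpha_k^2 L^2}{n}$.

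For the noise term, Lemma~\ref{lem: oy_k-h_k} directly yields $\bE[\|\og(k)-\bar\nabla F(\mx(k))\|^2\mid\mF(k)]\le \frac{2ML^2}{n^2}\|\mx(k)-\mathbf{1}x_*^\T\|^2+\frac{\bar M}{n}$, giving the last bracketed quantity in \eqref{Opt_error_pre}. Taking total expectation via the tower property over $\mF(k)$ completes the proof. The work is essentially bookkeeping, and the only mildly delicate point is verifying that the hypothesis $\alpha_k\le 1/L$ together with $\mu\le L$ makes the contraction factor in Lemma~\ref{lem: contraction_mu-L_convexity} equal to $1-\alpha_k\mu$ rather than $|1-\alpha_k L|$; I do not anticipate a genuine obstacle.
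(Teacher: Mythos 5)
Your proposal is correct and follows essentially the same route as the paper: average the update to get $\ox(k+1)=\ox(k)-\alpha_k\og(k)$, split off the zero-mean noise $\og(k)-\bar\nabla F(\mx(k))$ (handled by Lemma~\ref{lem: oy_k-h_k}), and bound the remaining term via Lemma~\ref{lem: contraction_mu-L_convexity} and Lemma~\ref{lem: strong_convexity}. The only cosmetic difference is that you apply the triangle inequality to $T_1+T_2$ and then square, while the paper expands the square and applies Cauchy--Schwarz to the cross term --- these yield the identical bound.
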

	\begin{proof}
		By the definitions of $\bar{x}(k)$, $\og(k)$ and relation \eqref{eq: x_k}, we have
		$
		\ox(k+1)=\ox(k)-\alpha_k\og(k).
		$
		Hence,
		\begin{align*}
			\begin{array}{ll}
				& \|\ox(k+1)-x_*\|^2=\left\|\ox(k)-\alpha_k\og(k)-x_*\right\|^2\\
				= & \left\|\ox(k)-\alpha_k \bar{\nabla} F(\mx(k))-x_*+\alpha_k \bar{\nabla} F(\mx(k))-\alpha_k\og(k)\right\|^2\\
				= & \|\ox(k)-\alpha_k \bar{\nabla} F(\mx(k))-x_*\|^2+ \alpha_k^2 \|\bar{\nabla} F(\mx(k))-\og(k)\|^2\\
				& + 2\alpha_k\langle \ox(k)-\alpha_k \bar{\nabla} F(\mx(k))-x_*, \bar{\nabla} F(\mx(k))-\og(k)\rangle.
			\end{array}
		\end{align*}
		Noting that 
		$\bE[\og(k)\mid\mx(k)]=\bar{\nabla} F(\mx(k))$ and $\bE[\left\|\og(k)- \bar{\nabla} F(\mx(k))\right\|^2\mid \mF(k)] \le \sp{\frac{2ML^2}{n^2}\|\mx(k)-\mathbf{1}x_*^{\T}\|^2+\frac{\bar{M}}{n}}$ from Lemma \ref{lem: oy_k-h_k}, we obtain
		\begin{multline}
			\label{optimization_error_contraction_pre_relation1}
			\bE[\|\ox(k+1)-x_*\|^2\mid \mF(k)]
			\le\|\ox(k)-\alpha_k \bar{\nabla} F(\mx(k))-x_*\|^2\\
			+\alpha_k^2\left(\sp{\frac{2ML^2}{n^2}\|\mx(k)-\mathbf{1}x_*^{\T}\|^2+\frac{\bar{M}}{n}}\right).
		\end{multline}
		We next bound the first term on the right-hand-side of \eqref{optimization_error_contraction_pre_relation1}.
		\begin{align*}
			& \|\ox(k)-\alpha_k \bar{\nabla} F(\mx(k))-x_*\|^2\\
			= & \|\ox(k)-\alpha_k\nabla f(\ox(k))-x_*+\alpha_k\nabla f(\ox(k))-\alpha_k \bar{\nabla} F(\mx(k))\|^2\\
			\le & \|\ox(k)-\alpha_k\nabla f(\ox(k))-x_*\|^2\\
			& +2\alpha_k\|\ox(k)-\alpha_k\nabla f(\ox(k))-x_*\|\|\nabla f(\ox(k))-\bar{\nabla} F(\mx(k))\| \\
			& +\alpha_k^2\|\nabla f(\ox(k))-\bar{\nabla} F(\mx(k))\|^2,
		\end{align*}
		where we used the Cauchy-Schwarz inequality.
		By Lemma \ref{lem: oy_k-h_k}, 
		\begin{equation*}
			\begin{array}{l}
				\|\nabla f(\ox(k))-\bar{\nabla} F(\mx(k))\|^2\le \frac{L^2}{n}\|\mx(k)-\mathbf{1}\ox(k)^{\T}\|^2.
			\end{array}
		\end{equation*}
		{\sp Since $\alpha_k\le \frac{1}{L}$, in light of Lemma \ref{lem: contraction_mu-L_convexity},
			\begin{equation*}
				\begin{array}{l}
					\left\|\ox(k)-\alpha_k\nabla f(\ox(k))-x_*\right\|^2\le (1-\alpha_k\mu)^2\left\|\ox(k)-x_*\right\|^2.
				\end{array}
		\end{equation*}}
		{\sp Then we have 
			\begin{multline}
				\label{optimization_error_contraction_pre_relation2}
				\|\ox(k)-\alpha_k \bar{\nabla} F(\mx(k))-x_*\|^2\le (1-\alpha_k\mu)^2\|\ox(k) - x_*\|^2\\
				+\frac{2\alpha_k L}{\sqrt{n}}(1-\alpha_k\mu)\|\ox(k) - x_*\|\|\mx(k)-\mathbf{1}\ox(k)^{\T}\|\\
				+\frac{\alpha_k^2L^2}{n}\|\mx(k)-\mathbf{1}\ox(k)^{\T}\|^2.
			\end{multline}
			In light of relation \eqref{optimization_error_contraction_pre_relation2}, taking full expectation on both sides of relation \eqref{optimization_error_contraction_pre_relation1} yields the result.
		}
	\end{proof}
	
	The next result is a corollary of Lemma \ref{lem: optimization_error_contraction_pre} {\sp with an additional condition on the stepsize $\alpha_k$. We are able to remove the cross term in relation \eqref{Opt_error_pre} and obtain a cleaner expression, which facilitates our later analysis.}
	\begin{lemma}
		\label{lem: optimization_error_contraction}
		{\sp Suppose Assumptions \ref{asp: gradient samples}-\ref{asp: mu-L_convexity} hold.} Under Algorithm \eqref{eq: x_k}, supposing $\alpha_k\le \min\{\frac{1}{L},\frac{1}{3\mu}\}$, then
		\begin{multline}
			\label{Opt_contraction_k>=K1}
			U(k+1)
			\le \left(1-\frac{3}{2}\alpha_k\mu\right)U(k)
			+\frac{3\alpha_k L^2}{n\mu}V(k)\\
			+\alpha_k^2\left(\sp{\frac{2ML^2}{n^2}\bE[\|\mx(k)-\mathbf{1}x_*^{\T}\|^2]+\frac{\bar{M}}{n}}\right).
		\end{multline}
	\end{lemma}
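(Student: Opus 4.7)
The plan is to derive Lemma 8 directly from Lemma 7's inequality \eqref{Opt_error_pre}, the only new ingredient being a weighted Young's inequality to eliminate the cross term $\mathbb{E}[\|\ox(k)-x_*\|\,\|\mx(k)-\mathbf{1}\ox(k)^\T\|]$. For any $\gamma>0$, I would use $2ab \le \gamma a^2 + b^2/\gamma$ to bound this cross term by $\tfrac{\gamma}{2} U(k) + \tfrac{1}{2\gamma} V(k)$ (after taking full expectation), substitute into \eqref{Opt_error_pre}, and collect the resulting coefficients of $U(k)$ and $V(k)$. The $\alpha_k^2\bigl(\tfrac{2ML^2}{n^2}\bE[\|\mx(k)-\mathbf{1}x_*^\T\|^2]+\tfrac{\bar M}{n}\bigr)$ term of Lemma 7 carries through untouched.

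After this substitution, the coefficient of $U(k)$ becomes $(1-\alpha_k\mu)^2 + \tfrac{\alpha_k L(1-\alpha_k\mu)\gamma}{\sqrt n}$ and the coefficient of $V(k)$ becomes $\tfrac{\alpha_k L(1-\alpha_k\mu)}{\gamma\sqrt n} + \tfrac{\alpha_k^2 L^2}{n}$. The key choice I would make is
\begin{equation*}
\gamma \;=\; \frac{\sqrt{n}\,\mu(1-\alpha_k\mu)}{L(3-\alpha_k\mu)},
\end{equation*}
which is engineered so that the $V(k)$ coefficient collapses exactly: $\tfrac{\alpha_k L^2(3-\alpha_k\mu)}{n\mu}+\tfrac{\alpha_k^2 L^2}{n} = \tfrac{3\alpha_k L^2}{n\mu}$, matching the target in \eqref{Opt_contraction_k>=K1}.

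For the $U(k)$ coefficient, the same choice of $\gamma$ gives $(1-\alpha_k\mu)^2 \bigl[1 + \tfrac{\alpha_k\mu}{3-\alpha_k\mu}\bigr] = \tfrac{3(1-\alpha_k\mu)^2}{3-\alpha_k\mu}$. The task reduces to the elementary inequality $\tfrac{3(1-t)^2}{3-t} \le 1 - \tfrac{3t}{2}$ with $t := \alpha_k\mu$. Clearing denominators, this is equivalent to $t(1-3t)\ge 0$, which holds precisely when $t \le 1/3$; this is exactly the role of the additional stepsize hypothesis $\alpha_k\le 1/(3\mu)$ beyond $\alpha_k\le 1/L$ (the latter was already needed to invoke Lemma \ref{lem: contraction_mu-L_convexity} inside Lemma 7). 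Taking full expectation throughout then yields \eqref{Opt_contraction_k>=K1}.

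The main obstacle I foresee is the simultaneous fit of the two coefficients: a naive weighting (e.g.\ $\gamma \propto \mu\sqrt n/L$ with a hand-tuned absolute constant) controls one of them but loses a constant factor on the other. The nontrivial part is recognizing that the $\alpha_k\mu$-dependent ratio $(1-\alpha_k\mu)/(3-\alpha_k\mu)$ is what makes both targets hit exactly, and that the resulting $U(k)$-coefficient bound $3(1-t)^2/(3-t)\le 1-3t/2$ is tight precisely at $t=1/3$, explaining the appearance of the stepsize threshold $1/(3\mu)$.
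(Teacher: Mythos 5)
Your proposal is correct and follows essentially the same route as the paper: both start from Lemma \ref{lem: optimization_error_contraction_pre} and absorb the cross term via Young's inequality with a tunable weight, then use $\alpha_k\mu\le 1/3$ to verify the resulting $U(k)$ and $V(k)$ coefficients. The only cosmetic difference is the choice of weight --- the paper takes $c=\tfrac{3}{8}\alpha_k\mu$ (equivalently $\gamma$ proportional to $\mu\sqrt{n}/L$ up to the factor you identify) and allows a little slack in both coefficients, whereas you tune $\gamma$ so that the $V(k)$ coefficient comes out exactly $\tfrac{3\alpha_k L^2}{n\mu}$; both choices yield the stated bound.
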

	\begin{proof}
		{\sp From Lemma \ref{lem: optimization_error_contraction_pre},
			\begin{align*}
				& U(k+1) \le (1-\alpha_k\mu)^2 U(k)+(1-\alpha_k\mu)^2 c U(k)\\
				& \quad +\frac{\alpha_k^2L^2}{n}\frac{1}{c}V(k) +\frac{\alpha_k^2L^2}{n}V(k)\\
				& \quad +\alpha_k^2\left(\sp{\frac{2ML^2}{n^2}\bE[\|\mx(k)-\mathbf{1}x_*^{\T}\|^2]+\frac{\bar{M}}{n}}\right)\\
				& \le (1+c)(1-\alpha_k\mu)^2 U(k) +\left(1+\frac{1}{c}\right)\frac{\alpha_k^2L^2}{n}V(k)\\
				& \quad +\alpha_k^2\left(\sp{\frac{2ML^2}{n^2}\bE[\|\mx(k)-\mathbf{1}x_*^{\T}\|^2]+\frac{\bar{M}}{n}}\right),
			\end{align*}
			where $c>0$ is arbitrary. }
		
		Take $c=\frac{3}{8}\alpha_k\mu$.
		Noting that $\alpha_k\le \frac{1}{3\mu}$, we have $(1+c)(1-\alpha_k\mu)^2\le 1-\frac{3}{2}\alpha_k\mu$, and $(1+\frac{1}{c})\alpha_k\le \frac{3}{\mu}$.
		{\sp Thus,
			\begin{multline*}
				U(k+1)
				\le  \left(1-\frac{3}{2}\alpha_k\mu\right)U(k)
				+\frac{3\alpha_k L^2}{n\mu}V(k)\\
				+\alpha_k^2\left(\frac{2ML^2}{n^2}\bE[\|\mx(k)-\mathbf{1}x_*^{\T}\|^2]+\frac{\bar{M}}{n}\right).
		\end{multline*}}
		
	\end{proof}
	
	{\sp Since the consensus error term $V(k)$ plays a key role in the statements of Lemma \ref{lem: optimization_error_contraction_pre} and Lemma \ref{lem: optimization_error_contraction}, we present the following lemma that bounds $V(k+1)$. }
	\begin{lemma}
		\label{lem: consensus_error_contraction}
		{\sp Suppose Assumptions \ref{asp: gradient samples}-\ref{asp: mu-L_convexity} hold. Under Algorithm \eqref{eq: x_k}, for all $k\ge 0$,
			\begin{align}
				\label{V(k+1) bound_pre}
				& V(k+1)\le \frac{\left(3+\rho_w^2\right)}{4}V(k)+\alpha_k^2\rho_w^2 n\sigma^2 \notag\\
				& \quad +2\alpha_k^2\rho_w^2\left(\frac{3}{1-\rho_w^2}+M\right)\left(L^2\bE[\|\mx(k)-\mathbf{1}x_*^{\T}\|^2] \right. \notag\\
				& \quad \left.+ \|\nabla F(\mathbf{1}x_*^{\T})\|^2\right).
		\end{align}}
	\end{lemma}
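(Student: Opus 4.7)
The plan is to start from the update rule \eqref{eq: x_k}. Since $\mathbf{1}^\T \mW = \mathbf{1}^\T$ implies $\ox(k+1) = \ox(k) - \alpha_k \og(k)$, we have
\begin{equation*}
\mx(k+1) - \mathbf{1}\ox(k+1)^\T = \left(\mW - \tfrac{1}{n}\mathbf{1}\mathbf{1}^\T\right)\bigl(\mx(k) - \alpha_k \mg(k)\bigr),
\end{equation*}
and Lemma \ref{lem: spectral norm} yields
\begin{equation*}
\|\mx(k+1) - \mathbf{1}\ox(k+1)^\T\|^2 \le \rho_w^2 \bigl\|(\mx(k) - \mathbf{1}\ox(k)^\T) - \alpha_k(\mg(k) - \mathbf{1}\og(k)^\T)\bigr\|^2.
\end{equation*}

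Next, I would decompose the stochastic gradient as $\mg(k) = \nabla F(\mx(k)) + \boldsymbol{\epsilon}(k)$, where by Assumption \ref{asp: gradient samples} the noise $\boldsymbol{\epsilon}(k)$ satisfies $\bE[\boldsymbol{\epsilon}(k) \mid \mF(k)] = 0$. Grouping the $\mF(k)$-measurable terms and taking conditional expectation, the cross term vanishes and the bound above splits into a deterministic piece $\|(\mx(k) - \mathbf{1}\ox(k)^\T) - \alpha_k(\nabla F(\mx(k)) - \mathbf{1}\bar{\nabla}F(\mx(k))^\T)\|^2$ plus a variance piece $\alpha_k^2\, \bE[\|(I-\tfrac{1}{n}\mathbf{1}\mathbf{1}^\T)\boldsymbol{\epsilon}(k)\|^2 \mid \mF(k)]$.

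To the deterministic piece I would apply Young's inequality $\|a - \alpha_k b\|^2 \le (1+c)\|a\|^2 + (1+c^{-1})\alpha_k^2 \|b\|^2$ with $c = \frac{3(1-\rho_w^2)}{4\rho_w^2}$, chosen precisely so that $\rho_w^2(1+c) = \frac{3+\rho_w^2}{4}$ (the announced contraction factor on $V(k)$); a short calculation yields $\rho_w^2(1+c^{-1}) = \frac{\rho_w^2(3+\rho_w^2)}{3(1-\rho_w^2)} \le \frac{3\rho_w^2}{1-\rho_w^2}$. Since $I - \tfrac{1}{n}\mathbf{1}\mathbf{1}^\T$ is an orthogonal projection, I would then bound both $\|\nabla F(\mx(k)) - \mathbf{1}\bar{\nabla}F(\mx(k))^\T\|^2 \le \|\nabla F(\mx(k))\|^2$ and $\|(I-\tfrac{1}{n}\mathbf{1}\mathbf{1}^\T)\boldsymbol{\epsilon}(k)\|^2 \le \|\boldsymbol{\epsilon}(k)\|^2$, and invoke Assumption \ref{asp: gradient samples} to obtain $\bE[\|\boldsymbol{\epsilon}(k)\|^2 \mid \mF(k)] = \sum_i \bE[\|g_i(k) - \nabla f_i(x_i(k))\|^2 \mid \mF(k)] \le n\sigma^2 + M\|\nabla F(\mx(k))\|^2$. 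Combining these, the coefficient of $\alpha_k^2\|\nabla F(\mx(k))\|^2$ becomes at most $\rho_w^2\bigl(\frac{3}{1-\rho_w^2} + M\bigr)$, while the coefficient of $V(k)$ is exactly $\frac{3+\rho_w^2}{4}$ and the noise floor contributes $\alpha_k^2 \rho_w^2 n \sigma^2$.

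Finally, I would use the $L$-Lipschitz gradient assumption to bound $\|\nabla F(\mx(k))\|^2 \le 2L^2 \|\mx(k)-\mathbf{1}x_*^\T\|^2 + 2\|\nabla F(\mathbf{1}x_*^\T)\|^2$ (this factor of $2$ explains the leading constant in the stated inequality), and then take full expectation. The main obstacle is the bookkeeping around the Young's constant so that the clean contraction coefficient $\frac{3+\rho_w^2}{4}$ emerges on $V(k)$ while the remaining $\alpha_k^2$ terms collapse into the compact form $\rho_w^2\bigl(\frac{3}{1-\rho_w^2} + M\bigr)$; beyond that, everything is a standard combination of the projection inequality, the Lipschitz bound, and the variance bound from Assumption \ref{asp: gradient samples}.
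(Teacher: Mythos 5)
Your proposal is correct and follows essentially the same route as the paper: contract through $\mW-\tfrac{1}{n}\mathbf{1}\mathbf{1}^{\T}$ via Lemma \ref{lem: spectral norm}, use conditional expectation to eliminate the noise cross term, apply Young's inequality with a constant of order $1-\rho_w^2$, bound the consensus-projected gradients by $\|\nabla F(\mx(k))\|^2$, invoke Assumption \ref{asp: gradient samples} for the variance, and finish with the Lipschitz bound $\|\nabla F(\mx(k))\|^2\le 2L^2\|\mx(k)-\mathbf{1}x_*^{\T}\|^2+2\|\nabla F(\mathbf{1}x_*^{\T})\|^2$. The only difference is cosmetic bookkeeping: the paper bounds the deterministic cross term by Cauchy--Schwarz with $c=\tfrac{1-\rho_w^2}{2}$ and then uses $\rho_w^2\tfrac{3-\rho_w^2}{2}\le\tfrac{3+\rho_w^2}{4}$, whereas you tune $c=\tfrac{3(1-\rho_w^2)}{4\rho_w^2}$ to hit the contraction factor $\tfrac{3+\rho_w^2}{4}$ directly, yielding the same final constants.
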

	\begin{proof}
		From relation \eqref{eq: x_k}, 
		\begin{align*}
			& \mx(k+1)-\mathbf{1}\ox(k+1)^{\T} \\
			& = \mW\left(\mx(k)-\alpha_k \mg(k)\right)-\mathbf{1}(\ox(k)-\alpha_k\og(k))\\
			& = \left(\mW-\frac{\mathbf{1}\mathbf{1}^{\T}}{n}\right)\left[(\mx(k)-\mathbf{1}\ox(k)^{\T})-\alpha_k(\mg(k)-\mathbf{1}\og(k)^{\T})\right],
		\end{align*}
		we have
		\begin{align*}
			&\|\mx(k+1)-\mathbf{1}\ox(k+1)^{\T}\|^2\\
			& \le \rho_w^2\left\|(\mx(k)-\mathbf{1}\ox(k)^{\T})-\alpha_k(\mg(k)-\mathbf{1}\og(k)^{\T})\right\|^2\\
			& = \rho_w^2\left[\left\|\mx(k)-\mathbf{1}\ox(k)^{\T}\right\|^2+\alpha_k^2\|\mg(k)-\mathbf{1}\og(k)^{\T}\|^2\right.\\
			& \quad -2\alpha_k\langle \mx(k)-\mathbf{1}\ox(k)^{\T},\mg(k)-\mathbf{1}\og(k)^{\T}\rangle\Big ].
		\end{align*}
		Since $\bE[\mg(k)\mid \mF(k)]=\nabla F(\mx(k))$ and $\bE[\og(k)\mid \mF(k)]=\bar{\nabla} F(\mx(k))$,
		\begin{multline*}
			\bE\left[\langle \mx(k)-\mathbf{1}\ox(k)^{\T},\mg(k)-\mathbf{1}\og(k)^{\T}\rangle\mid \mF(k)\right]\\
			=\langle \mx(k)-\mathbf{1}\ox(k)^{\T},\nabla F(\mx(k))-\mathbf{1}\bar{\nabla} F(\mx(k))\rangle,
		\end{multline*}
		and
		\begin{align*}
			& \bE[\|\mg(k)-\mathbf{1}\og(k)^{\T}\|^2\mid \mF(k)]\\
			& = \bE[\|\nabla F(\mx(k))-\mathbf{1}\bar{\nabla} F(\mx(k))-\nabla F(\mx(k))+\mathbf{1}\bar{\nabla} F(\mx(k))\\
			& \quad +\mg(k)-\mathbf{1}\og(k)^{\T}\|^2\mid \mF(k)]\\
			& = \|\nabla F(\mx(k))-\mathbf{1}\bar{\nabla} F(\mx(k))\|^2 \\
			& \quad +\bE[\|\nabla F(\mx(k))-\mg(k)- \\
			& \quad \quad \quad \quad (\mathbf{1}\bar{\nabla} F(\mx(k))-\mathbf{1}\og(k)^{\T})\|^2\mid \mF(k)]\\
			& \le \|\nabla F(\mx(k))-\mathbf{1}\bar{\nabla} F(\mx(k))\|^2\\
			& \quad+\bE[\|\nabla F(\mx(k))-\mg(k)\|^2\mid \mF(k)]\\
			& \le \|\nabla F(\mx(k))-\mathbf{1}\bar{\nabla} F(\mx(k))\|^2+n\sigma^2+\sp{M\|\nabla F(\mx(k))\|^2},
		\end{align*}
		where the last inequality follows from Assumption \ref{asp: gradient samples}.
		Therefore (assuming $\rho_w> 0$),
		\begin{align*}
			& \frac{1}{\rho_w^2}\bE[\|\mx(k+1)-\mathbf{1}\ox(k+1)^{\T}\|^2\mid \mF(k)]\\
			& \le \left\|\mx(k)-\mathbf{1}\ox(k)^{\T}\right\|^2+\alpha_k^2\|\nabla F(\mx(k))-\mathbf{1}\bar{\nabla} F(\mx(k))\|^2\\
			& \quad +\alpha_k^2\left(n\sigma^2+\sp{M\|\nabla F(\mx(k))\|^2}\right)\\
			& \quad -2\alpha_k\langle \mx(k)-\mathbf{1}\ox(k)^{\T},\nabla F(\mx(k))-\mathbf{1}\bar{\nabla} F(\mx(k))\rangle\\
			& \le \left\|\mx(k)-\mathbf{1}\ox(k)^{\T}\right\|^2+\alpha_k^2{\sp(1+M)}\|\nabla F(\mx(k))\|^2+\alpha_k^2n\sigma^2\\
			& \quad+2\alpha_k\left\|\mx(k)-\mathbf{1}\ox(k)^{\T}\right\|\left\|\nabla F(\mx(k))\right\|\\
			& {\sp\le (1+c)\left\|\mx(k)-\mathbf{1}\ox(k)^{\T}\right\|^2+\alpha_k^2(1+M)\|\nabla F(\mx(k))\|^2}\\
			& \quad {\sp+\alpha_k^2n\sigma^2+\frac{\alpha_k^2}{c}\left\|\nabla F(\mx(k))\right\|^2},
		\end{align*}
		where $c>0$ is arbitrary.
		{\sp Letting $c=\frac{1-\rho_w^2}{2}$ and noting that by Assumption \ref{asp: mu-L_convexity},
			\begin{align*}
				& \|\nabla F(\mx(k))\|^2\le 2\|\nabla F(\mx(k))-\nabla F(\mathbf{1}x_*^{\T})\|^2+2\|\nabla F(\mathbf{1}x_*^{\T})\|^2\\
				& \le 2L^2\|\mx(k)-\mathbf{1}x_*^{\T}\|^2 + 2\|\nabla F(\mathbf{1}x_*^{\T})\|^2,
			\end{align*}
			we have
			\begin{align*}
				& \frac{1}{\rho_w^2}\bE[\|\mx(k+1)-\mathbf{1}\ox(k+1)^{\T}\|^2\mid \mF(k)]-\alpha_k^2n\sigma^2\\
				& \le \left(\frac{3-\rho_w^2}{2}\right)\left\|\mx(k)-\mathbf{1}\ox(k)^{\T}\right\|^2\\
				& \quad +2\alpha_k^2\left(\frac{3}{1-\rho_w^2}+M\right)(L^2\|\mx(k)-\mathbf{1}x_*^{\T}\|^2 + \|\nabla F(\mathbf{1}x_*^{\T})\|^2).
			\end{align*}
			Notice that $\rho_w^2\left(\frac{3-\rho_w^2}{2}\right)\le \frac{(3+\rho_w^2)}{4}$. In light of Lemma \ref{lem: bounded_iterates_estimate_general_step}, taking full expectation on both sides of the above inequality leads to
			\begin{align*}
				& \bE[\|\mx(k+1)-\mathbf{1}\ox(k+1)^{\T}\|^2]-\alpha_k^2\rho_w^2 n\sigma^2\\
				& \le \frac{(3+\rho_w^2)}{4}\bE[\left\|\mx(k)-\mathbf{1}\ox(k)^{\T}\right\|^2]\\
				& \quad +2\alpha_k^2\rho_w^2\left(\frac{3}{1-\rho_w^2}+M\right)\left(L^2\bE[\|\mx(k)-\mathbf{1}x_*^{\T}\|^2] \right.\\
				& \quad \left.+ \|\nabla F(\mathbf{1}x_*^{\T})\|^2\right).
			\end{align*}
		}
	\end{proof}
	
	\section{Analysis}
	\label{sec: analysis}
	
	We are now ready to derive some preliminary convergence results for Algorithm \eqref{eq: x_k}. First, we provide a uniform bound on the iterates generated by Algorithm \eqref{eq: x_k} (in expectation) for all $k\ge 0$.
	Then based on the lemmas established in Section~\ref{subsec: pre_results}, we prove the {\sp sublinear convergence rate for Algorithm \eqref{eq: x_k}, i.e., $U(k)=\mathcal{O}(\frac{1}{k})$ and $V(k)=\mathcal{O}(\frac{1}{k^2})$. These results provide the foundation for our main convergence theorems in Section~\ref{sec: main_results}.}
	
	From now on we consider the following stepsize policy:
	\begin{equation}
		\label{Stepsize}
		\alpha_k:=\frac{\theta}{\mu (k+K)}, \quad \forall k,
	\end{equation}
	where {\sp constant} $\theta>1$, and
	\begin{equation}
		\label{def: K}
		K:= \left\lceil\frac{2\theta {\sp(1+M)}L^2}{\mu^2}\right\rceil,
	\end{equation}
	{\sp with $\lceil\cdot\rceil$ denoting the ceiling function.}
	
	\subsection{Uniform Bound}
	\label{subsec: uniform_bound}
	
	We first derive a uniform bound on the iterates generated by Algorithm \eqref{eq: x_k} (in expectation) for all $k\ge 0$. {\sp Such a result is helpful for bounding the error terms on the right hand sides of \eqref{Opt_error_pre}, \eqref{Opt_contraction_k>=K1} and \eqref{V(k+1) bound_pre}.}
	\begin{lemma}
		\label{lem: bounded_iterates_estimate_general_step}
		{\sp Suppose Assumptions \ref{asp: gradient samples}-\ref{asp: mu-L_convexity} hold.} Under Algorithm \eqref{eq: x_k} {\sp with stepsize policy \eqref{Stepsize}}, for all $k\ge 0$, we have
		\begin{multline}
			\label{iterates_uniform_bound_from_opt}
			\bE[\|\mx(k)-\mathbf{1}x_*^{\T}\|^2]\le \hat{X}\\
			:=\max\left\{\|\mx(0)-\mathbf{1}x_*^{\T}\|^2,\,\frac{9\|\nabla F(\mathbf{1}x_*^{\T})\|^2}{\mu^2}+\frac{n\sigma^2}{{\sp(1+M)}L^2}\right\}.
		\end{multline}
	\end{lemma}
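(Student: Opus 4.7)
I would proceed by induction on $k$. The base case $k=0$ is immediate from the definition of $\hat{X}$. For the inductive step, assume $\bE[\|\mx(k)-\mathbf{1}x_*^{\T}\|^2]\le \hat{X}$; the goal is to show the same for $k+1$.

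The first step is to exploit the structure of the update \eqref{eq: x_k}. Because $\mW$ is doubly stochastic, $\mathbf{1}x_*^{\T} = \mW\mathbf{1}x_*^{\T}$, so
\[
\mx(k+1)-\mathbf{1}x_*^{\T} = \mW\bigl(\mx(k)-\alpha_k\mg(k)-\mathbf{1}x_*^{\T}\bigr),
\]
and the spectral-norm bound $\|\mW\|\le 1$ gives $\|\mx(k+1)-\mathbf{1}x_*^{\T}\|^2 \le \|\mx(k)-\alpha_k\mg(k)-\mathbf{1}x_*^{\T}\|^2$. Expanding the right-hand side and taking conditional expectation with respect to $\mF(k)$, unbiasedness of $\mg(k)$ together with Assumption~\ref{asp: gradient samples} yields
\[
\bE[\|\mg(k)\|^2\mid \mF(k)] \le (1+M)\|\nabla F(\mx(k))\|^2 + n\sigma^2,
\]
while the cross term becomes $-2\alpha_k\langle \mx(k)-\mathbf{1}x_*^{\T}, \nabla F(\mx(k))\rangle$.

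Next I would handle the cross term using strong convexity combined with Young's inequality: writing $\nabla f_i(x_i(k)) = (\nabla f_i(x_i(k))-\nabla f_i(x_*))+\nabla f_i(x_*)$ and using $\mu$-strong convexity on the first piece plus $|ab|\le \frac{\mu}{4}a^2+\frac{1}{\mu}b^2$ on the inner product with $\nabla f_i(x_*)$, summing over $i$ gives
\[
\langle \mx(k)-\mathbf{1}x_*^{\T}, \nabla F(\mx(k))\rangle \ge \tfrac{3\mu}{4}\|\mx(k)-\mathbf{1}x_*^{\T}\|^2 - \tfrac{1}{\mu}\|\nabla F(\mathbf{1}x_*^{\T})\|^2.
\]
For the $\|\nabla F(\mx(k))\|^2$ term I would use $L$-Lipschitz continuity to bound it by $2L^2\|\mx(k)-\mathbf{1}x_*^{\T}\|^2 + 2\|\nabla F(\mathbf{1}x_*^{\T})\|^2$. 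Combining all of the above, taking full expectation, and letting $A_k := \bE[\|\mx(k)-\mathbf{1}x_*^{\T}\|^2]$, one obtains a recursion of the form
\[
A_{k+1} \le \bigl[1 - \tfrac{3\alpha_k\mu}{2} + 2\alpha_k^2(1+M)L^2\bigr]A_k + \tfrac{2\alpha_k}{\mu}\|\nabla F(\mathbf{1}x_*^{\T})\|^2 + 2\alpha_k^2(1+M)\|\nabla F(\mathbf{1}x_*^{\T})\|^2 + \alpha_k^2 n\sigma^2.
\]

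The key step, and the main place the choice of $K$ is used, is to observe that $\alpha_k \le \alpha_0 = \theta/(\mu K) \le \mu/(2(1+M)L^2)$ by the definition \eqref{def: K} of $K$. This implies $2\alpha_k^2(1+M)L^2 \le \alpha_k\mu$, so the contraction factor is at most $1-\tfrac{\alpha_k\mu}{2}$. Applying the inductive hypothesis and this stepsize bound to each residual term, each of $\tfrac{2\alpha_k}{\mu}\|\nabla F(\mathbf{1}x_*^{\T})\|^2$, $2\alpha_k^2(1+M)\|\nabla F(\mathbf{1}x_*^{\T})\|^2$, and $\alpha_k^2 n\sigma^2$ can be absorbed into $\tfrac{\alpha_k\mu}{2}\hat{X}$: the first two together contribute at most $\tfrac{\alpha_k\mu}{2}\cdot\frac{(4+2\mu^2/L^2)}{\mu^2}\|\nabla F(\mathbf{1}x_*^{\T})\|^2\le \tfrac{\alpha_k\mu}{2}\cdot\frac{9}{\mu^2}\|\nabla F(\mathbf{1}x_*^{\T})\|^2$ (using $L\ge\mu$), and the last contributes at most $\tfrac{\alpha_k\mu}{2}\cdot\frac{n\sigma^2}{(1+M)L^2}$. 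Summing gives exactly $\tfrac{\alpha_k\mu}{2}\hat{X}$, so $A_{k+1}\le(1-\tfrac{\alpha_k\mu}{2})\hat{X}+\tfrac{\alpha_k\mu}{2}\hat{X}=\hat{X}$, closing the induction.

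The main obstacle is the bookkeeping in the last step: one must verify that the specific numerical constants in $\hat{X}$ (namely $9/\mu^2$ and $1/((1+M)L^2)$) are large enough to absorb all three residual terms simultaneously, which is exactly where the factor of $2$ in the definition of $K$ is calibrated. Everything else is a fairly standard stochastic-approximation argument.
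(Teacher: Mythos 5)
Your proposal is correct, but it is not the route the paper takes. The paper's proof works agent by agent, measuring distance from an arbitrary reference point (the origin, replaced by $x_*$ only at the very end): it derives a per-agent recursion that contains the awkward cross term $2\alpha_k\|\nabla f_i(0)\|\sqrt{\bE[\|x_i(k)\|^2]}$ and, rather than linearizing it, introduces the compact level set $\mathcal{X}_i$ on which the drift can be positive, argues that the second moment either decreases or stays below the maximum $R_i$ over that set, bounds $\sum_i R_i$, and pushes everything through $\|\mW\|\le 1$. You instead center at $\mathbf{1}x_*^{\T}$ from the start (using $\mW\mathbf{1}x_*^{\T}=\mathbf{1}x_*^{\T}$), kill the would-be square-root term with a $\tfrac{\mu}{4}$-weighted Young inequality inside the strong-convexity estimate, and obtain a genuinely linear recursion $A_{k+1}\le\bigl(1-\tfrac{\alpha_k\mu}{2}\bigr)A_k+\tfrac{\alpha_k\mu}{2}\bigl(\tfrac{6}{\mu^2}\|\nabla F(\mathbf{1}x_*^{\T})\|^2+\tfrac{n\sigma^2}{(1+M)L^2}\bigr)$ that closes a direct induction against $\hat{X}$. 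Both arguments hinge on the same two ingredients — $\|\mW\|\le 1$ from double stochasticity and $\alpha_k\le \mu/(2(1+M)L^2)$ from the definition of $K$ — but yours is more elementary, avoids the level-set/maximization detour and the ``reference point is arbitrary'' remark, and in fact yields the slightly better constant $6/\mu^2$ in place of the paper's $9/\mu^2$, so the stated $\hat{X}$ is more than sufficient; what the paper's argument buys is that it never needs Young's inequality and delivers the bound relative to any center in one pass. One small step worth making explicit in your write-up: the convex-combination conclusion requires $1-\tfrac{\alpha_k\mu}{2}\ge 0$, which holds here because $\alpha_k\mu\le \mu^2/(2(1+M)L^2)\le 1/2$.
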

	\begin{proof}
		The following arguments are inspired by those in \cite{nedic2016stochastic}. 
		
		{\sp First, we bound $\bE\left[\|x_i(k)-\alpha_k g_i(k)\|^2\right]$ for all $i\in\mathcal{N}$ and $k\ge 0$.} By Assumption \ref{asp: gradient samples},
		\begin{align*}
			& \bE\left[\|x_i(k)-\alpha_k g_i(k)\|^2\mid \mF(k)\right]\\
			& = \|x_i(k)-\alpha_k \nabla f_i(x_i(k))\|^2\\
			& \quad +\alpha_k^2\bE\left[\|\nabla f_i(x_i(k))-g_i(k)\|^2\mid \mF(k)\right]\\
			& \le \|x_i(k)\|^2-2\alpha_k \langle\nabla f_i(x_i(k)), x_i(k) \rangle+\alpha_k^2\|\nabla f_i(x_i(k))\|^2\\
			& \quad+\alpha_k^2(\sigma^2{\sp + M\|\nabla f_i(x_i(k))\|^2}).
		\end{align*}
		From the strong convexity and Lipschitz continuity of $f_i$, we know that
		\begin{align*}
			&\langle\nabla f_i(x_i(k)), x_i(k) \rangle\\
			&=  \langle\nabla f_i(x_i(k))-\nabla f_i(0), x_i(k)-0 \rangle+\langle\nabla f_i(0), x_i(k) \rangle\\
			& \ge  \mu\|x_i(k)\|^2+\langle\nabla f_i(0), x_i(k) \rangle,
		\end{align*}
		and
		\begin{multline*}
			\|\nabla f_i(x_i(k))\|^2=\|\nabla f_i(x_i(k))-\nabla f_i(0)+\nabla f_i(0)\|^2\\
			\le 2L^2\|x_i(k)\|^2+2\|\nabla f_i(0)\|^2.
		\end{multline*}
		Hence,
		\begin{align*}
			\begin{split}
				&\bE\left[\|x_i(k)-\alpha_k g_i(k)\|^2\mid \mF(k)\right]\\
				&\le \|x_i(k)\|^2-2\alpha_k \left[\mu\|x_i(k)\|^2+\langle\nabla f_i(0), x_i(k) \rangle\right]\\
				&\quad +2\alpha_k^2 {\sp(1+M)}\left(L^2\|x_i(k)\|^2+\|\nabla f_i(0)\|^2\right)+\alpha_k^2\sigma^2\\
				&\le \|x_i(k)\|^2 - 2\alpha_k\mu\|x_i(k)\|^2   +  2\alpha_k \|\nabla f_i(0)\|\|x_i(k)\|\\
				&\quad + 2\alpha_k^2 {\sp(1+M)}\left(L^2\|x_i(k)\|^2 + \|\nabla f_i(0)\|^2\right)+ \alpha_k^2\sigma^2\\
				&\le [1-2\alpha_k\mu+2\alpha_k^2{\sp(1+M)} L^2]\|x_i(k)\|^2\\
				&\quad +2\alpha_k \|\nabla f_i(0)\|\|x_i(k)\|+\alpha_k^2\left[2{\sp(1+M)}\|\nabla f_i(0)\|^2+\sigma^2\right].
			\end{split}
		\end{align*}
		{\sp Taking full expectation on both sides,} it follows that
		\begin{align*}
			\begin{split}
				& \bE\left[\|x_i(k)-\alpha_k g_i(k)\|^2\right]\\
				& \le [1-2\alpha_k\mu+2\alpha_k^2{\sp(1+M)} L^2]\bE[\|x_i(k)\|^2]\\
				& \quad +2\alpha_k \|\nabla f_i(0)\|\sqrt{\bE[\|x_i(k)\|^2]}\\
				& \quad +\alpha_k^2\left[2{\sp(1+M)}\|\nabla f_i(0)\|^2+\sigma^2\right].
			\end{split}
		\end{align*}
		From the definition of $K$ {\sp in \eqref{def: K}}, $\alpha_k\le \frac{\mu}{2{\sp(1+M)}L^2}$ for all $k\ge 0$. Hence,
		\begin{equation}
			\label{Inequality: x-alpha g}
			\begin{split}
				& \bE\left[\|x_i(k)-\alpha_k g_i(k)\|^2\right]\\
				&\le  (1-\alpha_k\mu)\bE[\|x_i(k)\|^2]+2\alpha_k \|\nabla f_i(0)\|\sqrt{\bE[\|x_i(k)\|^2]}\\
				&\quad+\alpha_k^2\left[2{\sp(1+M)}\|\nabla f_i(0)\|^2+\sigma^2\right]\\
				&\le   \bE[\|x_i(k)\|^2]-\alpha_k\Big[\mu\bE[\|x_i(k)\|^2]-2\|\nabla f_i(0)\|\sqrt{\bE[\|x_i(k)\|^2]}\\
				& \quad\left.-\frac{\mu}{2L^2}\left(2\|\nabla f_i(0)\|^2+\frac{\sigma^2}{{\sp(1+M)}}\right)\right].
			\end{split}
		\end{equation}
		Let us define the following set:
		\begin{multline}
			\label{definition: X}
			\mathcal{X}_i:= \Big\{q\ge 0: \mu q-2\|\nabla f_i(0)\|\sqrt{q}\\
			\left.-\frac{\mu}{2L^2}\left(2\|\nabla f_i(0)\|^2+\frac{\sigma^2}{{\sp(1+M)}}\right)\le 0\right\},
		\end{multline}
		which is non-empty and compact. If $\bE[\|x_i(k)\|^2]\notin \mathcal{X}_i$, we know from inequality (\ref{Inequality: x-alpha g}) that $\bE\left[\|x_i(k)-\alpha_k g_i(k)\|^2\right]\le \bE[\|x_i(k)\|^2]$.
		Otherwise,
		\begin{equation*}
			\begin{split}
				& \bE\left[\|x_i(k)-\alpha_k g_i(k)\|^2\right]
				\le \max_{q\in\mathcal{X}_i}\left\{q-\frac{\mu}{2{\sp(1+M)}L^2}\Big[\mu q\right.\\
				&\quad \left.\left. -2\|\nabla f_i(0)\|\sqrt{q}-\frac{\mu}{2L^2}\left(2\|\nabla f_i(0)\|^2+\frac{\sigma^2}{{\sp(1+M)}}\right)\right]\right\}\\
				&=  \max_{q\in\mathcal{X}_i}\left\{\left(1-\frac{\mu^2}{2{\sp(1+M)}L^2}\right)q+\frac{\mu}{{\sp(1+M)}L^2}\|\nabla f_i(0)\|\sqrt{q}\right.\\
				&\quad \left.+\frac{\mu^2}{4{\sp(1+M)}L^4}\left(2\|\nabla f_i(0)\|^2+\frac{\sigma^2}{{\sp(1+M)}}\right)\right\}.
			\end{split}
		\end{equation*}
		Define the last term above as $R_i$. The previous arguments imply that for all $k\ge 0$,
		\begin{equation*}
			\sp{\bE\left[\|x_i(k)-\alpha_k g_i(k)\|^2\right]\le \max\{\bE\left[\|x_i(k)\|^2\right],R_i\}.}
		\end{equation*}	
		Note that {\sp from relation \eqref{eq: x_k},}
		\begin{multline*}
			\|\mx(k+1)\|^2\le \|\mW\|^2\|\mx(k)-\alpha_k\mg(k)\|^2\\
			\le \|\mx(k)-\alpha_k\mg(k)\|^2.
		\end{multline*}
		We have
		\begin{equation}
			\label{iterates_uniform_bound_from_origin}
			\begin{array}{l}
				\bE[\|\mx(k)\|^2]\le \max\left\{\|\mx(0)\|^2,\sum_{i=1}^n R_i\right\}. 
			\end{array}
		\end{equation}
		
		We further bound $R_i$ as follows. From the definition of $\mathcal{X}_i$,
		\[\max_{q\in\mathcal{X}_i}q \le \linebreak[4] \frac{8\|\nabla f_i(0)\|^2}{\mu^2} + \frac{3\sigma^2}{4{\sp(1+M)}L^2}.\]
		Hence,
		\begin{equation}
			\label{bound: R_i}
			\begin{array}{l}
				R_i =  \max_{q\in\mathcal{X}_i}\left\{q-\frac{\mu}{2{\sp(1+M)}L^2}\Big[\mu q-2\|\nabla f_i(0)\|\sqrt{q}\right.\\
				\quad\left.\left.-\frac{\mu}{2L^2}\left(2\|\nabla f_i(0)\|^2+\frac{\sigma^2}{{\sp(1+M)}}\right)\right]\right\}\\
				\le \max_{q\in\mathcal{X}_i}q-\frac{\mu}{2{\sp(1+M)}L^2}\min_{q\in\mathcal{X}_i}\Big\{\mu q-2\|\nabla f_i(0)\|\sqrt{q}\\
				\quad\left.-\frac{\mu}{2L^2}\left(2\|\nabla f_i(0)\|^2+\frac{\sigma^2}{{\sp(1+M)}}\right)\right\}\\
				\le \frac{8\|\nabla f_i(0)\|^2}{\mu^2}+\frac{3\sigma^2}{4{\sp(1+M)}L^2}\\
				\quad+\frac{\mu}{2{\sp(1+M)}L^2}\left[\frac{\|\nabla f_i(0)\|^2}{\mu}+\frac{\mu}{2L^2}\left(2\|\nabla f_i(0)\|^2+\frac{\sigma^2}{{\sp(1+M)}}\right)\right]\\
				\le \frac{9\|\nabla f_i(0)\|^2}{\mu^2}+\frac{\sigma^2}{{\sp(1+M)}L^2}.
			\end{array}
		\end{equation}
		In light of inequality (\ref{bound: R_i}), further noticing that the choice of $0$ is arbitrary in the proof of (\ref{iterates_uniform_bound_from_origin}), we obtain the uniform bound for $\bE[\|\mx(k)-\mathbf{1}x_*^{\T}\|^2]$ in (\ref{iterates_uniform_bound_from_opt}).	
	\end{proof}
	{\sp The uniform bound provided in Lemma \ref{lem: bounded_iterates_estimate_general_step} is critical for deriving the sublinear convergence rates of $U(k)$ and $V(k$), as it holds for all $k\ge 0$.}
	
	\subsection{Sublinear Rate}
	\label{subsec: sublinear_rate}
	
	{\sp With the help of} Lemma \ref{lem: optimization_error_contraction} and Lemma \ref{lem: consensus_error_contraction} from Section \ref{subsec: pre_results} {\sp and Lemma \ref{lem: bounded_iterates_estimate_general_step}}, we show in Lemma \ref{lemma: prelim_rates_general_stepsize} below that Algorithm \eqref{eq: x_k} enjoys the sublinear convergence rate, i.e., $U(k)=\mathcal{O}(\frac{1}{k})$ and $V(k)=\mathcal{O}(\frac{1}{k^2})$. 
	For the ease of analysis, we define two auxiliary variables:
	\begin{multline}
		\label{def: tUtV}
		\tilde{U}(k):=U(k-K),\, \tilde{V}(k):=V(k-K),\,
		\forall k\ge K.
	\end{multline}
	{\sp We first derive uniform upper bounds for $U(k)$ and $V(k)$ respectively for all $k\ge 0$ based on Lemma \ref{lem: bounded_iterates_estimate_general_step}. With these bounds, we are able to characterize the constants appearing in the sublinear convergence rates for $U(k)$ and $V(k)$ in Lemma \ref{lemma: prelim_rates_general_stepsize} and Lemma \ref{lem: rates_general_stepsize} respectively.
		\begin{lemma}
			\label{lem: UVW(K_1) bounds}
			{\sp Suppose Assumptions \ref{asp: gradient samples}-\ref{asp: mu-L_convexity} hold.} Under Algorithm \eqref{eq: x_k}, we have 
			\begin{equation}
				U(k)\le \frac{\hat{X}}{n}, \, V(k)\le \hat{X},\; \forall k\ge 0.
			\end{equation}
		\end{lemma}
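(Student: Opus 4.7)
The plan is to derive both bounds from the uniform bound on $\bE[\|\mx(k)-\mathbf{1}x_*^{\T}\|^2]$ established in Lemma \ref{lem: bounded_iterates_estimate_general_step} via a single orthogonal decomposition of the Frobenius norm.

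First, I would write $\mx(k)-\mathbf{1}x_*^{\T} = (\mx(k)-\mathbf{1}\ox(k)^{\T}) + \mathbf{1}(\ox(k)-x_*)^{\T}$ and expand the squared Frobenius norm. The cross term vanishes because $\mathbf{1}^{\T}(\mx(k)-\mathbf{1}\ox(k)^{\T}) = \mathbf{0}^{\T}$ by definition of $\ox(k)$, so the two matrices are orthogonal in the Frobenius inner product. This yields the Pythagorean identity
\begin{equation*}
\|\mx(k)-\mathbf{1}x_*^{\T}\|^2 = \|\mx(k)-\mathbf{1}\ox(k)^{\T}\|^2 + n\,\|\ox(k)-x_*\|^2,
\end{equation*}
where the factor $n$ on the last term comes from $\|\mathbf{1}v^{\T}\|^2 = n\|v\|^2$.

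Taking full expectation and using the definitions of $U(k)$ and $V(k)$, this becomes
\begin{equation*}
\bE\bigl[\|\mx(k)-\mathbf{1}x_*^{\T}\|^2\bigr] = V(k) + n\,U(k).
\end{equation*}
Applying Lemma \ref{lem: bounded_iterates_estimate_general_step}, which gives $\bE[\|\mx(k)-\mathbf{1}x_*^{\T}\|^2]\le \hat{X}$ for all $k\ge 0$, and noting that both $V(k)\ge 0$ and $U(k)\ge 0$, I immediately conclude $V(k)\le \hat{X}$ and $U(k)\le \hat{X}/n$, as claimed.

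There is essentially no obstacle here: the lemma is a one-line corollary of the orthogonal decomposition, so the only thing to verify carefully is the orthogonality (i.e., that the row average of $\mx(k)-\mathbf{1}\ox(k)^{\T}$ is zero, which is immediate from the definition $\ox(k)=\frac{1}{n}\sum_i x_i(k)$) and the norming constant $n$ for $\|\mathbf{1}v^{\T}\|^2$. No additional stepsize condition, convexity argument, or induction is required beyond invoking Lemma \ref{lem: bounded_iterates_estimate_general_step}.
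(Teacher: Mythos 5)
Your proof is correct and follows essentially the same route as the paper: both reduce the lemma to the uniform bound of Lemma \ref{lem: bounded_iterates_estimate_general_step}, with you packaging the two bounds via the exact Pythagorean identity $\|\mx(k)-\mathbf{1}x_*^{\T}\|^2=\|\mx(k)-\mathbf{1}\ox(k)^{\T}\|^2+n\|\ox(k)-x_*\|^2$ (which the paper itself uses later, in Theorem \ref{Thm: best rate}), while the paper simply invokes the two inequalities $\bE[\|\ox(k)-x_*\|^2]\le \frac{1}{n}\bE[\|\mx(k)-\mathbf{1}x_*^{\T}\|^2]$ and $\bE[\|\mx(k)-\mathbf{1}\ox(k)^{\T}\|^2]\le \bE[\|\mx(k)-\mathbf{1}x_*^{\T}\|^2]$ separately. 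No gap; your orthogonality check and the factor $n$ are exactly what is needed.
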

		\begin{proof}
			By definitions of $U(k)$, $V(k)$, and Lemma \ref{lem: bounded_iterates_estimate_general_step},  we have
			\begin{align*}
				U(k) &=\bE[\|\ox(k)-x_*\|^2]\\
				&\le \frac{1}{n}\bE[\|\mx(k)-\mathbf{1}x_*^{\T}\|^2]\le \frac{\hat{X}}{n},\\
				V(k) &=\bE[\|\mx(k)-\mathbf{1}\ox(k)^{\T}\|^2]\\
				& \le \bE[\|\mx(k)-\mathbf{1}x_*^{\T}\|^2]\le \hat{X}.
			\end{align*}
		\end{proof}
	}
	
	Denote {\sp an auxiliary counter}
	\begin{equation}
		\label{tilde k}
		\tilde{k}:=k+K,\quad \forall k\ge 0.
	\end{equation}
	{\sp Our strategy is to first show that the consensus error of Algorithm \eqref{eq: x_k} decays as $V(k)=\mathcal{O}(\frac{1}{k^2})$ based on Lemma \ref{lem: consensus_error_contraction}, since $U(k)$ does not appear explicitly in relation \eqref{V(k+1) bound_pre}. }
	\begin{lemma}
		\label{lemma: prelim_rates_general_stepsize}
		{\sp Suppose Assumptions \ref{asp: gradient samples}-\ref{asp: mu-L_convexity} hold.} {\sp Let 
			\begin{equation}
				\label{K1_definition}
				K_1:= \left\lceil  \max\left\{2K,\frac{16}{1-\rho_w^2}\right\} \right\rceil.
		\end{equation}} 
		Under Algorithm \eqref{eq: x_k} with stepsize \eqref{Stepsize}, for all $k\ge  K_1-K$, we have
		\begin{align*}
			V(k)\le \frac{\hV}{\tilde{k}^2},
		\end{align*}
		{\sp where
			\begin{equation}
				\label{def: hV}
				\hV := \max\left\{K_1^2 \hat{X}, \frac{8\theta^2\rho_w^2 c_1}{\mu^2 (1-\rho_w^2)}\right\},
			\end{equation}
			with
			\begin{align}
				\label{def: c_1}
				c_1 := 2\left(\frac{3}{1-\rho_w^2}+M\right)\left(L^2\hat{X}+ \|\nabla F(\mathbf{1}x_*^{\T})\|^2\right)+n\sigma^2.
		\end{align}}
	\end{lemma}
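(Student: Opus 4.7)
The plan is to combine the consensus-error recursion from Lemma \ref{lem: consensus_error_contraction} with the uniform iterate bound from Lemma \ref{lem: bounded_iterates_estimate_general_step}, and then verify the claim by a direct induction on $k$.

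First I would use $\bE[\|\mx(k)-\mathbf{1}x_*^{\T}\|^2]\le \hat X$ (Lemma \ref{lem: bounded_iterates_estimate_general_step}) inside \eqref{V(k+1) bound_pre} to collapse the right-hand side to
$$V(k+1)\;\le\;\frac{3+\rho_w^2}{4}\,V(k)+\alpha_k^2\rho_w^2\,c_1,$$
with $c_1$ as in \eqref{def: c_1}. Writing $\alpha_k=\theta/(\mu\tilde k)$ this becomes
$$V(k+1)\;\le\;\Bigl(1-\frac{1-\rho_w^2}{4}\Bigr)V(k)+\frac{\theta^2\rho_w^2 c_1}{\mu^2\tilde k^2}.$$

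Next I would run induction on $k\ge K_1-K$ to show $V(k)\le \hat V/\tilde k^2$. For the base case $\tilde k=K_1$, Lemma \ref{lem: UVW(K_1) bounds} gives $V(K_1-K)\le \hat X\le K_1^2\hat X/K_1^2\le \hat V/K_1^2$, using the first term in the maximum defining $\hat V$. For the inductive step, assume $V(k)\le \hat V/\tilde k^2$; plugging this into the recursion it suffices to establish
$$\Bigl(1-\frac{1-\rho_w^2}{4}\Bigr)\hat V+\frac{\theta^2\rho_w^2 c_1}{\mu^2}\;\le\;\hat V\cdot\frac{\tilde k^2}{(\tilde k+1)^2}.$$
Using the elementary inequality $\tilde k^2/(\tilde k+1)^2\ge 1-2/\tilde k$, this reduces to
$$\frac{\theta^2\rho_w^2 c_1}{\mu^2}\;\le\;\hat V\Bigl(\frac{1-\rho_w^2}{4}-\frac{2}{\tilde k}\Bigr).$$
The choice $K_1\ge 16/(1-\rho_w^2)$ and $\tilde k\ge K_1$ guarantee $2/\tilde k\le (1-\rho_w^2)/8$, so the right-hand side is at least $\hat V(1-\rho_w^2)/8$, and the inequality then holds exactly because the second term in the maximum defining $\hat V$ gives $\hat V\ge 8\theta^2\rho_w^2 c_1/(\mu^2(1-\rho_w^2))$.

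The only real obstacle is lining up the two constants inside the $\max$ defining $\hat V$ with the two roles $K_1$ plays: the $2K$ part makes $\tilde k\ge 2K$ so that the uniform bound from Lemma \ref{lem: bounded_iterates_estimate_general_step} is available on the entire inductive tail, while the $16/(1-\rho_w^2)$ part is what converts the geometric factor $(3+\rho_w^2)/4$ into enough slack to absorb the $(\tilde k+1)^2/\tilde k^2$ overhead at every step. Once these two threshold conditions are identified, the rest is a routine scalar induction and I would not belabor the algebra beyond what is sketched above.
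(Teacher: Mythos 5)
Your proposal is correct and follows essentially the same route as the paper: collapse the bound of Lemma \ref{lem: consensus_error_contraction} via the uniform bound of Lemma \ref{lem: bounded_iterates_estimate_general_step} into $V(k+1)\le \frac{3+\rho_w^2}{4}V(k)+\alpha_k^2\rho_w^2 c_1$, then induct with base case $\tilde k=K_1$ from Lemma \ref{lem: UVW(K_1) bounds} and absorb the step overhead using $\tilde k\ge K_1\ge 16/(1-\rho_w^2)$, which is exactly the paper's argument (the paper just expands $(\tilde k/(\tilde k+1))^2-\frac{3+\rho_w^2}{4}\ge\frac{1-\rho_w^2}{8}$ directly instead of your $1-2/\tilde k$ bound). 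One cosmetic remark: the $2K$ term in $K_1$ is not needed here for the uniform bound (which holds for all $k\ge 0$); it is used in the later lemmas invoking Lemma \ref{lem: product}.
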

	\begin{proof}
		{\sp From Lemma \ref{lem: consensus_error_contraction} and  Lemma \ref{lem: bounded_iterates_estimate_general_step}, for $k\ge 0$,
			\begin{align}
				V(k+1)\le \frac{\left(3+\rho_w^2\right)}{4}V(k)+\alpha_k^2\rho_w^2 c_1,   \label{V(k+1) bound}
			\end{align}
			with $c_1$ defined in \eqref{def: c_1}. From the definitions of  $\alpha_k$ and $\tilde{V}(k)$ in \eqref{Stepsize} and \eqref{def: tUtV} respectively, we know that when $k\ge K$,
			\begin{align*}
				\tilde{V}(k+1)
				\le \frac{(3+\rho_w^2)}{4}\tilde{V}(k) +\frac{\theta^2\rho_w^2 c_1}{\mu^2}\left(\frac{1}{k^2}\right).
			\end{align*}
			
			We now prove the lemma by induction. For $k=K_1$, we know $\tilde{V}(k)= \frac{K_1^2\tilde{V}(K_1)}{K_1^2}\le \frac{\hV}{k^2}$ from Lemma \ref{lem: UVW(K_1) bounds}. Now suppose $\tilde{V}(k)\le \frac{\hV}{k^2}$ for some $k\ge K_1$, then
			\begin{align*}
				\tilde{V}(k+1)\le \frac{(3+\rho_w^2)}{4}\frac{\hV}{k^2}+\frac{\theta^2\rho_w^2 c_1}{\mu^2}\left(\frac{1}{k^2}\right).
			\end{align*}
		To show that $\tilde{V}(k+1)\le \frac{\hV}{(k+1)^2}$, it is sufficient to show
		\begin{equation*}
			\frac{(3+\rho_w^2)}{4}\frac{\hV}{k^2}+\frac{\theta^2\rho_w^2 c_1}{\mu^2}\left(\frac{1}{k^2}\right) \le \frac{\hV}{(k+1)^2},
		\end{equation*}
	or equivalently,
	\begin{equation}
		\label{hV_inequality}
		\hV \ge \frac{\theta^2\rho_w^2 c_1}{\mu^2}\left[\left(\frac{k}{k+1}\right)^2-\frac{3+\rho_w^2}{4}\right]^{-1}.
	\end{equation}
Since $k\ge K_1\ge \frac{16}{1-\rho_w^2}$, we have
\begin{multline*}
	\left(\frac{k}{k+1}\right)^2-\frac{3+\rho_w^2}{4}=-\frac{2}{k+1}+\frac{1}{(k+1)^2}+\frac{1-\rho_w^2}{4}\\
	\ge \frac{1-\rho_w^2}{8}.
\end{multline*}
Hence relation \eqref{hV_inequality} is satisfied with $\hV\ge \frac{8\theta^2\rho_w^2 c_1}{\mu^2 (1-\rho_w^2)}$.
			We then have for all $k\ge K_1$,
			\begin{equation*}
				\tilde{V}(k)\le \frac{1}{k^2}\max\left\{K_1^2 \hat{X},\frac{8\theta^2\rho_w^2 c_1}{\mu^2 (1-\rho_w^2)}\right\}.
			\end{equation*}
			Recalling the connection of $\tilde{V}(k)$ and $V(k)$, we conclude that
			$V(k)\le \frac{\hV}{\tilde{k}^2}$ for all $k\ge K_1-K$.
		}
	\end{proof}
	
	{\sp To prove the sublinear convergence of $U(k)$, we start with a useful lemma which provides lower and upper bounds for the product of a decreasing sequence. Such products arise in the convergence proof for $U(k)$ and our main convergence results in Section \ref{sec: main_results}.}
	\begin{lemma}
		\label{lem: product}
		For any $1<a< k$ ($a\in \mathbb{N}$) and $1<\gamma\le a/2$,
		\begin{equation*}
			\frac{a^{2\gamma}}{k^{2\gamma}}\le\prod_{t=a}^{k-1} \left(1-\frac{\gamma}{t}\right)\le \frac{a^{\gamma}}{k^{\gamma}}.
		\end{equation*}
	\end{lemma}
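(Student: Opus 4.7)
The plan is to prove both bounds by induction on $k$, with the (trivial) base case $k=a$ in which both inequalities become $1\le 1\le 1$ since the product is empty. For the inductive step $k\to k+1$, factoring the new product as $(1-\gamma/k)$ times the previous one reduces each bound to a one-step inequality between $1-\gamma/k$ and a power of $k/(k+1)$.

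For the upper bound, I would verify $1-\gamma/k\le (k/(k+1))^{\gamma}$ for all $k\ge a$. Since $\gamma>1$, Bernoulli's inequality $(1+x)^{r}\ge 1+rx$ (valid for $r\ge 1$ and $x>-1$), applied at $r=\gamma$ and $x=-1/(k+1)$, gives $(k/(k+1))^{\gamma}\ge 1-\gamma/(k+1)\ge 1-\gamma/k$, which closes the induction.

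For the lower bound, I need $1-\gamma/k\ge (k/(k+1))^{2\gamma}$. Taking reciprocals and using $1/(1-\gamma/k)=1+\gamma/(k-\gamma)$, the inequality becomes $1+\gamma/(k-\gamma)\le (1+1/k)^{2\gamma}$. A second application of Bernoulli, now with exponent $2\gamma\ge 1$, yields $(1+1/k)^{2\gamma}\ge 1+2\gamma/k$, so it suffices to show $\gamma/(k-\gamma)\le 2\gamma/k$, which rearranges to $k\ge 2\gamma$. This is exactly where the hypothesis $\gamma\le a/2$ enters, since the induction keeps $k\ge a\ge 2\gamma$ throughout.

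The main subtlety is that the exponent on the lower bound is $2\gamma$ rather than $\gamma$: the factor of two is built in precisely so that one application of Bernoulli closes the induction under the assumption $\gamma\le a/2$. A tighter exponent would require a sharper one-step bound, and the inequality $1-\gamma/k\ge (k/(k+1))^{2\gamma}$ can fail once $k<2\gamma$, so the hypothesis $\gamma\le a/2$ is essential rather than cosmetic. Beyond this observation the argument is routine: two appeals to Bernoulli plus an empty-product base case.
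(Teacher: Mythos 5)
Your proof is correct and follows essentially the same route as the paper: an induction on $k$ (with the empty-product base case at $k=a$) that reduces each bound to the one-step inequalities $1-\gamma/k\le (k/(k+1))^{\gamma}$ and $1-\gamma/k\ge (k/(k+1))^{2\gamma}$ for $k\ge 2\gamma$. The only difference is that you justify these one-step inequalities explicitly via Bernoulli's inequality, whereas the paper simply asserts them; this is a welcome but minor addition.
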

	\begin{proof}
		Denote $G(k):=\prod_{t=a}^{k-1} \left(1-\frac{\gamma}{t}\right)$. We first show that $G(k)\le \frac{a^{\gamma}}{k^{\gamma}}$. Suppose $G(k)\le \frac{M_1}{k^{\gamma}}$ for some $M_1>0$ and $k\ge a$.
		Then,
		\begin{align*}
			G(k+1)= \left(1-\frac{\gamma}{k}\right)G(k)\le  \left(1-\frac{\gamma}{k}\right)\frac{M_1}{k^{\gamma}}\le \frac{M_1}{(k+1)^{\gamma}}.
		\end{align*}
		To see why the last inequality holds, note that $\left(\frac{k}{k+1}\right)^{\gamma}\ge 1-\frac{\gamma}{k}$.
		Taking $M_1=a^{\gamma}$, we have $G(a)=1\le \frac{M_1}{a^{\gamma}}$. The desired relation then holds for all $k> a$.
		
		Now suppose $G(k)\ge \frac{M_2}{k^{2\gamma}}$ for some $M_2>0$ and $k\ge a$. It follows that
		\begin{align*}
			G(k+1)= \left(1-\frac{\gamma}{k}\right)G(k)\ge  \left(1-\frac{\gamma}{k}\right)\frac{M_2}{k^{2\gamma}}\ge \frac{M_2}{(k+1)^{2\gamma}},
		\end{align*}
		where the last inequality follows from
		$\left(\frac{k}{k+1}\right)^{2\gamma}\le 1-\frac{\gamma}{k}$ (noting that $\gamma\le a/2\le k/2$).
		Taking $M_2=a^{2\gamma}$, we have $G(a)=1\le \frac{M_2}{a^{2\gamma}}$. The desired relation then holds for all $k> a$.
	\end{proof}
	
	{\sp In light of Lemma \ref{lem: optimization_error_contraction} and the other supporting lemmas, we establish the $\mathcal{O}(\frac{1}{k})$ convergence rate of $U(k)$ in the following lemma.}
	\begin{lemma}
		\label{lem: rates_general_stepsize}
		{\sp Suppose Assumptions \ref{asp: gradient samples}-\ref{asp: mu-L_convexity} hold.} Under Algorithm \eqref{eq: x_k} with stepsize \eqref{Stepsize}, suppose $\theta>2$.
		We have
		\begin{multline*}
			U(k) \leq \frac{\theta^2{\sp c_2}}{(1.5\theta-1)n\mu^2 \tilde{k}} {\sp+ \frac{K_1^{1.5\theta}}{\tilde{k}^{1.5\theta}} \frac{\hat{X}}{n}}\\
		 \quad +\left[\frac{3\theta^2(1.5\theta-1){\sp c_2}}{(1.5\theta-2)n\mu^2}+\frac{6\theta L^2 \hV}{(1.5\theta-2)n\mu^2}\right]\frac{1}{\tilde{k}^2},
		\end{multline*}
		for all $k\ge K_1-K$, {\sp where
			\begin{equation}
				\label{def: c_2}
				c_2 : =\frac{2ML^2}{n}\hat{X}+\bar{M}.
		\end{equation}}
	\end{lemma}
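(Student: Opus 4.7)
The plan is to combine the contraction in Lemma~\ref{lem: optimization_error_contraction} with the uniform bound from Lemma~\ref{lem: bounded_iterates_estimate_general_step} and the already-proved consensus rate from Lemma~\ref{lemma: prelim_rates_general_stepsize}, and then unroll a scalar recursion. First, for $k\ge K_1-K$, substitute $\bE[\|\mx(k)-\mathbf{1}x_*^{\T}\|^2]\le \hat{X}$ and $V(k)\le \hV/\tilde{k}^{\,2}$ into \eqref{Opt_contraction_k>=K1}. With $\alpha_k=\theta/(\mu\tilde{k})$ this yields
\begin{equation*}
U(k+1)\le \left(1-\frac{1.5\theta}{\tilde{k}}\right)U(k)+\frac{\theta^2 c_2}{n\mu^2\tilde{k}^{\,2}}+\frac{3L^2\theta \hV}{n\mu^2\tilde{k}^{\,3}}.
\end{equation*}
Reindexing by $t=\tilde{k}$ and setting $\tilde{U}(t):=U(t-K)$ puts the recursion in the canonical form $\tilde{U}(t+1)\le (1-\gamma/t)\tilde{U}(t)+A/t^2+B/t^3$ with $\gamma=1.5\theta$, $A=\theta^2 c_2/(n\mu^2)$, and $B=3L^2\theta\hV/(n\mu^2)$.

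Next, iterate the recursion from $t=K_1$, where Lemma~\ref{lem: UVW(K_1) bounds} supplies the base estimate $\tilde{U}(K_1)\le\hat{X}/n$. Writing $P(a,b):=\prod_{s=a}^{b-1}(1-\gamma/s)$, one obtains
\begin{equation*}
\tilde{U}(t)\le P(K_1,t)\,\tilde{U}(K_1)+\sum_{\tau=K_1}^{t-1}P(\tau+1,t)\left[\frac{A}{\tau^2}+\frac{B}{\tau^3}\right].
\end{equation*}
By Lemma~\ref{lem: product} (applicable because $\theta>2$ and $K_1\ge 2K\ge 4\theta$ together force $\gamma\le K_1/2$), $P(K_1,t)\le (K_1/t)^{\gamma}$, yielding the initial-condition term $(K_1^{1.5\theta}/\tilde{k}^{1.5\theta})\hat{X}/n$. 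Likewise $P(\tau+1,t)\le ((\tau+1)/t)^{\gamma}$ reduces each perturbation sum to a power sum of the form $\sum_{\tau}(\tau+1)^\gamma/\tau^p$ for $p\in\{2,3\}$.

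For the dominant $A$-term, estimate $(\tau+1)^\gamma/\tau^2$ by a small constant multiple of $\tau^{\gamma-2}$ and invoke $\sum_{\tau=K_1}^{t-1}\tau^{\gamma-2}\le t^{\gamma-1}/(\gamma-1)$ to extract the leading contribution $A/[(\gamma-1)t]=\theta^2 c_2/[(1.5\theta-1)n\mu^2\tilde{k}]$, with a residual absorbed into the $1/\tilde{k}^{\,2}$ coefficient via the multiplier $3(1.5\theta-1)$ appearing in the stated constant. For the $B$-term, the analogous estimate $\sum_{\tau}\tau^{\gamma-3}\le t^{\gamma-2}/(\gamma-2)$ gives a $1/\tilde{k}^{\,2}$ contribution with factor $B/(\gamma-2)=6\theta L^2\hV/[(1.5\theta-2)n\mu^2]$; here the hypothesis $\theta>2$ is used precisely to make $\gamma-2>1>0$ so that this integral is finite. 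Mapping $t=\tilde{k}$ back completes the bound.

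\textbf{Main obstacle.} The chief difficulty is bookkeeping constants sharply enough: loose $2^\gamma$-type absolute bounds on $(\tau+1)^\gamma/\tau^\gamma$ would inflate the coefficients and break the stated form. The comparisons must be handled tightly, using $\tau\ge K_1$ to absorb the $(1+1/\tau)^\gamma$ discrepancy into the $1/\tilde{k}^{\,2}$ residual. An alternative that avoids the messy asymptotic summation is to guess the three-term bound in the statement and verify it by strong induction on $t$: assume $\tilde{U}(t)$ satisfies the inequality, apply $1/(t+1)\ge 1/t-1/t^2$ and $1/(t+1)^2\ge 1/t^2-2/t^3$, and match coefficients of $1/t$, $1/t^2$, $1/t^3$; the $1/t^2$ balance forces the $A/(\gamma-1)$ coefficient while the $1/t^3$ balance produces the $(\gamma-2)^{-1}$ factor, exactly reproducing the claimed constants.
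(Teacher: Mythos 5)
Your proposal follows essentially the same route as the paper: substitute the uniform bound $\hat{X}$ and the consensus rate $V(k)\le \hV/\tilde{k}^{2}$ into Lemma~\ref{lem: optimization_error_contraction}, unroll the resulting recursion from $K_1$ with the base estimate of Lemma~\ref{lem: UVW(K_1) bounds}, control the products via Lemma~\ref{lem: product}, and sum the power series tightly (the paper does exactly your ``absorb the $(1+1/\tau)^{\gamma}$ discrepancy'' step via the splitting $\tfrac{1}{t^2}\le \tfrac{1}{(t+1)^2}+\tfrac{3}{(t+1)^3}$ for $t\ge K_1$) so that the leading $1/(1.5\theta-1)$ coefficient is preserved. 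Aside from a harmless arithmetic slip (your $B/(\gamma-2)$ equals $3\theta L^2\hV/[(1.5\theta-2)n\mu^2]$, half the stated constant, which only leaves extra slack), this is the paper's proof.
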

	\begin{proof}
		In light of Lemma \ref{lem: optimization_error_contraction} {\sp and Lemma \ref{lem: bounded_iterates_estimate_general_step}}, for all $k\ge 0$, we have
		\begin{align*}
			U(k+1) \le \left(1-\frac{3}{2}\alpha_k\mu\right)U(k)+\frac{3\alpha_k L^2}{n\mu}V(k)+\frac{\alpha_k^2{\sp c_2}}{n}.
		\end{align*}
		Recalling the definitions of $\tilde{U}(k)$ and $\tilde{V}(k)$, for all $k\ge K$,
		\begin{equation*}
			\tilde{U}(k+1)\le  \left(1-\frac{3\theta}{2k}\right)\tilde{U}(k)+\frac{3\theta L^2}{n\mu^2}\frac{\tilde{V}(k)}{k}+\frac{\theta^2{\sp c_2}}{n\mu^2}\frac{1}{k^2}.
		\end{equation*}
		Therefore,
		\begin{multline*}
			\tilde{U}(k) \leq \prod_{t=K_1}^{k-1} \left(1-\frac{3\theta}{2t}\right) \tilde{U}(K_1)\\
			+ \sum_{t=K_1}^{k-1}\left(\prod_{j=t+1}^{k-1} \left(1-\frac{3\theta}{2j}\right)\right) \left(\frac{\theta^2{\sp c_2}}{n\mu^2}\frac{1}{t^2} +\frac{3\theta L^2}{n\mu^2}\frac{\tilde{V}(t)}{t}\right).
		\end{multline*}
		From Lemma \ref{lem: product},
		\begin{align*}
			& \tilde{U}(k) 
			\leq \frac{K_1^{1.5\theta}}{k^{1.5\theta}} \tilde{U}(K_1)\\
			&\quad + \sum_{t=K_1}^{k-1}\frac{(t+1)^{1.5\theta}}{k^{1.5\theta}}\left(\frac{\theta^2{\sp c_2}}{n\mu^2t^2} +\frac{3\theta L^2}{n\mu^2}\frac{\tilde{V}(t)}{t}\right)\notag\\
			&  =  \frac{1}{k^{1.5\theta}}\frac{\theta^2{\sp c_2}}{n\mu^2}\sum_{t=K_1}^{k-1}\frac{(t+1)^{1.5\theta}}{t^2}+\frac{K_1^{1.5\theta}}{k^{1.5\theta}} \tilde{U}(K_1)\\
			& \quad + \sum_{t=K_1}^{k-1}\frac{(t+1)^{1.5\theta}}{k^{1.5\theta}} \frac{3\theta L^2}{n\mu^2}\frac{\tilde{V}(t)}{t}.
		\end{align*}
		In light of Lemma \ref{lemma: prelim_rates_general_stepsize}, when $k\ge K_1$,
		$\tilde{V}(k)\le  \frac{\hV}{k^2}$.
		Hence,
		\begin{align*}
			&\tilde{U}(k) -\frac{1}{k^{1.5\theta}}\frac{\theta^2{\sp c_2}}{n\mu^2}\sum_{t=K_1}^{k-1}\frac{(t+1)^{1.5\theta}}{t^2}-\frac{K_1^{1.5\theta}}{k^{1.5\theta}} \tilde{U}(K_1) \\
			& \leq 
			\sum_{t=K_1}^{k-1}\frac{(t+1)^{1.5\theta}}{k^{1.5\theta}} \frac{3\theta L^2}{n\mu^2}\frac{\hV}{t^3}\\
			&  = \frac{1}{k^{1.5\theta}}\frac{3\theta L^2 \hV}{n\mu^2}\sum_{t=K_1}^{k-1}\frac{(t+1)^{1.5\theta}}{t^3}.
		\end{align*}
		However, we have for any $b>a\ge K_1$,
		\begin{align*}
			& \sum_a^b \frac{(t+1)^{1.5\theta}}{t^2}\\
			&\le \sum_a^{b-2} \left[\frac{(t+1)^{1.5\theta}}{(t+1)^2}+3\frac{(t+1)^{1.5\theta}}{(t+1)^3}\right]+\frac{b^{1.5\theta}}{(b-1)^2}\\
			& \quad+\frac{(b+1)^{1.5\theta}}{b^2} \\
			& \le \int_a^b \left(t^{1.5\theta-2}+3t^{1.5\theta-3}\right)dt+\frac{2(b+1)^{1.5\theta}}{b^2}\\
			& \le  \frac{b^{1.5\theta-1}}{1.5\theta-1}+\frac{3b^{1.5\theta-2}}{1.5\theta-2}+3b^{1.5\theta-2},
		\end{align*}
		{\sp where the last inequality comes from the fact that $(\frac{b+1}{b})^{1.5\theta}\le (\frac{4\theta+1}{4\theta})^{1.5\theta}\le \exp(\frac{3}{8})< \frac{3}{2}$ (given that $b> K_1\ge 4\theta$),}
		{\sp and
			\begin{align*}
				& \sum_a^b \frac{(t+1)^{1.5\theta}}{t^3}\le \frac{3}{2}\sum_a^b t^{1.5\theta-3} \le \frac{3}{2}\int_a^{b+1} t^{1.5\theta-3}dt \\
				& \le \frac{3}{2}\frac{(b+1)^{1.5\theta-2}}{1.5\theta-2} \le \frac{2b^{1.5\theta-2}}{1.5\theta-2}.
		\end{align*}}
		Hence, {\sp for all $k\ge K_1$,}
		\begin{multline*}
			\tilde{U}(k) \leq \frac{\theta^2{\sp c_2}}{(1.5\theta-1)n\mu^2 k}+\frac{3\theta^2(1.5\theta-1){\sp c_2}}{(1.5\theta-2)n\mu^2}\frac{1}{k^2}\\
			+\frac{K_1^{1.5\theta}}{k^{1.5\theta}} \tilde{U}(K_1)+\frac{6\theta L^2 \hV}{(1.5\theta-2)n\mu^2}\frac{1}{k^2}.
		\end{multline*}
		Recalling Lemma \ref{lem: UVW(K_1) bounds} and the definition of $\tilde{U}(k)$ yields the desired result.
	\end{proof}
	\begin{remark}{\sp 
			Notice that the convergence rate established in Lemma \ref{lem: rates_general_stepsize} is not asymptotically the same as centralized stochastic gradient descent, since the constant $c_2$ contains information about the initial solutions. In the next section, we will improve the convergence result and show that DSGD indeed performs as well as centralized SGD asymptotically.
		}	
	\end{remark}
	
	\section{Main Results}
	\label{sec: main_results}
	
	In this section, we perform a non-asymptotic analysis of network independence for Algorithm \eqref{eq: x_k}. Specifically, in {\sp Theorem~\ref{Thm: best rate},
		we show that 
		\begin{align*} \frac{1}{n}\sum_{i=1}^n\bE[\|x_i(k)-x_*\|^2]& =\frac{\theta^2 \sp{\bar{M}}}{(2\theta-1)n\mu^2 \tilde{k}} \\ &  +\mathcal{O}\left(\frac{1}{\sqrt{n}(1-\rho_w)}\right)\frac{1}{\tilde{k}^{1.5}}\\ & +\mathcal{O}\left(\frac{1}{(1-\rho_w)^2}\right)\frac{1}{\tilde{k}^2}, 
		\end{align*} where the first term is network independent and the second and third (higher-order) terms} depends on $(1-\rho_w)$. {\sp Then we compare the result} with centralized stochastic gradient descent and show that asymptotically, the two methods have the same convergence rate $\sp{\frac{\theta^2 \sp{\bar{M}}}{(2\theta-1)n\mu^2 \tilde{k}}}$.
	In addition, it takes $K_T=\mathcal{O}\left(\frac{n}{(1-\rho_w)^2}\right)$ time for Algorithm \eqref{eq: x_k} to reach this asymptotic rate of convergence. Finally, we construct a ``hard" optimization problem for which we show the transient time $K_T$ is sharp.
	
	{\sp Our first step is to simplify the presentation of the convergence results in Lemma \ref{lemma: prelim_rates_general_stepsize} and Lemma \ref{lem: rates_general_stepsize}, so that we can utilize them for deriving improved convergence rates conveniently. For this purpose, we first estimate the  constants $\hat{X}$, $\hV$, $c_1$ and $c_2$} appearing in {\sp the two lemmas}  and derive their dependency on the network size $n$, the spectral gap $(1-\rho_w)$, {\sp the summation of initial optimization errors $\sum_{i=1}^{n}\|x_i(0)-x_*\|^2$, and $\sum_{i=1}^{n}\|\nabla f_i(x_*)\|^2$, where the last term can be seen as a measure of the difference among each agent's individual cost functions}.
	\begin{lemma}
		\label{lem: order_constants_general_step}
		{\sp Denote $A : = \sum_{i=1}^{n}\|x_i(0)-x_*\|^2$ and $B := \sum_{i=1}^{n}\|\nabla f_i(x_*)\|^2$.}
		\sp{Then,
			\begin{align*}
				& \hat{X}=\mathcal{O}(A+B+n),\quad c_1 = \mathcal{O}\left(\frac{A+B+n}{1-\rho_w}\right),\\
				& \hV=\mathcal{O}\left(\frac{A+B+n}{(1-\rho_w)^2}\right), \quad c_2 = \mathcal{O}\left(\frac{A+B+n}{n}\right).
		\end{align*}}
	\end{lemma}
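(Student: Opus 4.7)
The proof is essentially a bookkeeping exercise: substitute the explicit formulas for each of the four constants and track how $A$, $B$, $n$, and $(1-\rho_w)$ propagate. The key preliminary observations are that $\|\mx(0)-\mathbf{1}x_*^{\T}\|^2 = A$ and $\|\nabla F(\mathbf{1}x_*^{\T})\|^2 = B$ by definition, and that $1-\rho_w^2 = (1-\rho_w)(1+\rho_w) \ge 1-\rho_w$, so $1/(1-\rho_w^2) = \mathcal{O}(1/(1-\rho_w))$. Throughout, $\mu$, $L$, $M$, $\sigma$, and $\theta$ are absorbed into the $\mathcal{O}(\cdot)$ constants.

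The first step is $\hat{X}$. Since $\max\{a,b\} \le a+b$ for nonnegative $a,b$, the definition gives
\[
\hat{X} \le A + \frac{9B}{\mu^2} + \frac{n\sigma^2}{(1+M)L^2} = \mathcal{O}(A+B+n).
\]
Next, $c_1 = 2\bigl(\tfrac{3}{1-\rho_w^2}+M\bigr)\bigl(L^2 \hat{X} + B\bigr) + n\sigma^2$. Using the bound on $\hat{X}$ and $1/(1-\rho_w^2) = \mathcal{O}(1/(1-\rho_w))$, the bracketed prefactor is $\mathcal{O}(1/(1-\rho_w))$, and $L^2 \hat{X} + B = \mathcal{O}(A+B+n)$. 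The extra $n\sigma^2$ is absorbed into this, giving $c_1 = \mathcal{O}((A+B+n)/(1-\rho_w))$.

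For $\hV = \max\{K_1^2 \hat{X},\, 8\theta^2 \rho_w^2 c_1/[\mu^2(1-\rho_w^2)]\}$, I would bound each argument separately. Since $K = \lceil 2\theta(1+M)L^2/\mu^2\rceil$ is independent of both $n$ and $\rho_w$, and $K_1 = \lceil\max\{2K, 16/(1-\rho_w^2)\}\rceil$, we have $K_1 = \mathcal{O}(1/(1-\rho_w))$, hence $K_1^2 \hat{X} = \mathcal{O}((A+B+n)/(1-\rho_w)^2)$. The second argument equals $\mathcal{O}(c_1/(1-\rho_w)) = \mathcal{O}((A+B+n)/(1-\rho_w)^2)$ by the bound on $c_1$. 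Taking the maximum preserves the order. Finally, from $c_2 = (2ML^2/n)\hat{X} + \bar{M}$ with $\bar{M} = 2MB/n + \sigma^2$, the first term is $\mathcal{O}((A+B+n)/n)$ and the other two contribute $\mathcal{O}(B/n + 1) = \mathcal{O}((B+n)/n)$, so $c_2 = \mathcal{O}((A+B+n)/n)$.

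There is no real obstacle here; the only mild subtlety is confirming that $K$ is $\mathcal{O}(1)$ in both $n$ and $\rho_w$ so that it contributes only a constant factor to $K_1$, and that the max in $\hat{X}$ and $\hV$ can be safely replaced by a sum inside $\mathcal{O}(\cdot)$. Otherwise the four estimates follow by direct substitution and collection of terms.
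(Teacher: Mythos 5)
Your proposal is correct and follows essentially the same route as the paper's proof: bound $\hat{X}$ by the sum of the two arguments of the max, substitute into the definitions of $c_1$, $\hV$ (using $K_1=\mathcal{O}(1/(1-\rho_w))$), and $c_2$ (using $\bar{M}=2MB/n+\sigma^2$), and absorb the fixed constants $\mu, L, M, \sigma, \theta$ into the $\mathcal{O}(\cdot)$ terms. The identifications $\|\mx(0)-\mathbf{1}x_*^{\T}\|^2=A$ and $\|\nabla F(\mathbf{1}x_*^{\T})\|^2=B$ and the observation $1/(1-\rho_w^2)=\mathcal{O}(1/(1-\rho_w))$ are exactly what the paper uses implicitly, so no gap remains.
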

	\begin{proof}
		{\sp We first estimate the constant $\hat{X}$ which appears in the definition (\ref{def: hV}) for $\hV$.
			From Lemma \ref{lem: bounded_iterates_estimate_general_step},
			\begin{multline*}
				\hat{X}\le \|\mx(0)-\mathbf{1}x_*\|^2+\frac{9\|\nabla F(\mathbf{1}x_*^{\T})\|^2}{\mu^2}+\frac{n\sigma^2}{{\sp(1+M)}L^2}\\
				= \mathcal{O}(A+B+n).
			\end{multline*}
			From the definition of $c_1$ in (\ref{def: c_1}),
			\begin{multline*}
				c_1 = 2\left(\frac{3}{1-\rho_w^2}+M\right)\left(L^2\hat{X}+ \|\nabla F(\mathbf{1}x_*^{\T})\|^2\right)+n\sigma^2\\
				=\mathcal{O}\left(\frac{A+B+n}{1-\rho_w}\right).
			\end{multline*}
			Noting that $K_1=\mathcal{O}(\frac{1}{1-\rho_w})$, by definition,
			\begin{align*}
				\hV = \max\left\{K_1^2 \hat{X}, \frac{8\theta^2\rho_w^2 c_1}{\mu^2 (1-\rho_w^2)}\right\}= \mathcal{O}\left(\frac{A+B+n}{(1-\rho_w)^2}\right).
			\end{align*}
			From the definition of $c_2$ in \eqref{def: c_2}, we have
			\begin{equation*}
				c_2 = \frac{2ML^2}{n}\hat{X}+\bar{M} =\mathcal{O}\left(\frac{A+B+n}{n}\right).
		\end{equation*}}
	\end{proof}
	
	{\sp In light of Lemma \ref{lem: order_constants_general_step}, the convergence result of $V(k)$ given in Lemma \ref{lemma: prelim_rates_general_stepsize} can be easily simplified since $\hV$ is the only constant.}   {Regarding the optimization error $U(k)$,} in light of Lemma \ref{lem: bounded_iterates_estimate_general_step}, Lemma \ref{lemma: prelim_rates_general_stepsize}, Lemma \ref{lem: rates_general_stepsize} and Lemma \ref{lem: order_constants_general_step}, we have the following corollary {\sp which simplifies the presentation of the convergence result in Lemma \ref{lem: rates_general_stepsize}.
	}
	\begin{corollary} 
		\label{cor: U(k)_general_step}
		{\sp Suppose Assumptions \ref{asp: gradient samples}-\ref{asp: mu-L_convexity} hold.} 
		Under Algorithm \eqref{eq: x_k} with {\sp stepsize \eqref{Stepsize} and assuming }$\theta>2$, when $k\ge K_1-K$,
		\begin{align*}
			U(k) \le  \frac{\theta^2{\sp c_2}}{(1.5\theta-1)n\mu^2\tilde{k}}+ \frac{c}{\tilde{k}^2},
		\end{align*}
		where
		\begin{align*}
			c=\mathcal{O}\left(\sp{\frac{A+B+n}{n(1-\rho_w)^2}}\right).
		\end{align*}
	\end{corollary}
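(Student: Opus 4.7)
The plan is to read off the result from Lemma~\ref{lem: rates_general_stepsize}, which already has the desired leading term $\frac{\theta^2 c_2}{(1.5\theta-1)n\mu^2 \tilde{k}}$, and then bundle the three remaining correction terms into a single $c/\tilde{k}^2$ contribution. The order estimates in Lemma~\ref{lem: order_constants_general_step} will be used to express $c$ in terms of $A$, $B$, $n$, and $1-\rho_w$.

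First I would dispose of the term $\frac{K_1^{1.5\theta}}{\tilde{k}^{1.5\theta}}\frac{\hat X}{n}$. The hypothesis $k\ge K_1-K$ gives $\tilde k=k+K\ge K_1$, so $K_1/\tilde k\le 1$, and since $\theta>2$ implies $1.5\theta>3>2$, we may write
\begin{equation*}
\frac{K_1^{1.5\theta}}{\tilde k^{1.5\theta}}\frac{\hat X}{n}
= \left(\frac{K_1}{\tilde k}\right)^{1.5\theta-2}\frac{K_1^2\hat X}{n\tilde k^2}
\le \frac{K_1^2\hat X}{n\tilde k^2}.
\end{equation*}
Recalling from \eqref{K1_definition} that $K_1=\mathcal{O}(1/(1-\rho_w))$ and from Lemma~\ref{lem: order_constants_general_step} that $\hat X=\mathcal{O}(A+B+n)$, this contribution is $\mathcal{O}\!\left(\frac{A+B+n}{n(1-\rho_w)^2}\right)\!/\tilde k^2$.

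Next I would handle the two explicit $1/\tilde k^2$ terms in Lemma~\ref{lem: rates_general_stepsize}. By Lemma~\ref{lem: order_constants_general_step}, $c_2=\mathcal{O}\!\left(\frac{A+B+n}{n}\right)$, so the $c_2$-term is $\mathcal{O}\!\left(\frac{A+B+n}{n^2}\right)\!/\tilde k^2$, which is absorbed by the previous estimate. Likewise $\hV=\mathcal{O}\!\left(\frac{A+B+n}{(1-\rho_w)^2}\right)$, so the $\hV$-term is $\mathcal{O}\!\left(\frac{A+B+n}{n(1-\rho_w)^2}\right)\!/\tilde k^2$. Summing the three pieces produces the claimed $c=\mathcal{O}\!\left(\frac{A+B+n}{n(1-\rho_w)^2}\right)$.

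There is essentially no obstacle here: the corollary is a bookkeeping consequence of Lemma~\ref{lem: rates_general_stepsize} and Lemma~\ref{lem: order_constants_general_step}. The only point worth noting is that the assumption $\theta>2$ is used precisely to ensure $1.5\theta-2>0$, which is what allows us to trade the superfluous decay in $\frac{K_1^{1.5\theta}}{\tilde k^{1.5\theta}}$ for a single factor $K_1^2/\tilde k^2$; a smaller choice of $\theta$ would prevent this conversion and inflate the dependence on $1/(1-\rho_w)$ in $c$.
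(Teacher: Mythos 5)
Your proposal is correct and follows essentially the same route as the paper: the paper likewise rewrites $\frac{K_1^{1.5\theta}}{\tilde{k}^{1.5\theta}}\frac{\hat{X}}{n}$ as $\frac{K_1^{1.5\theta-2}}{\tilde{k}^{1.5\theta-2}}\cdot\frac{K_1^2\hat{X}}{n\tilde{k}^2}$ with $\tilde{k}\ge K_1$, and then invokes Lemma \ref{lem: order_constants_general_step} (with $K_1=\mathcal{O}(1/(1-\rho_w))$) to absorb the three correction terms from Lemma \ref{lem: rates_general_stepsize} into $c=\mathcal{O}\left(\frac{A+B+n}{n(1-\rho_w)^2}\right)$. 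Your closing remark about $\theta>2$ is a fair gloss on why the exponent trade works, though the condition is simply inherited from Lemma \ref{lem: rates_general_stepsize}.
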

	\begin{proof}
		From Theorem \ref{lem: rates_general_stepsize} and Lemma \ref{lem: order_constants_general_step}, when $k\ge K_1-K=\mathcal{O}(\frac{1}{1-\rho_w})$,
		\begin{align}
			\label{U(k) bound proof}
			U(k) \le &  \frac{\theta^2 {\sp c_2}}{(1.5\theta-1)n\mu^2 \tilde{k}}{\sp+ \frac{K_1^{1.5\theta-2}}{\tilde{k}^{1.5\theta-2}} \frac{\hat{X}}{n}\frac{K_1^2}{\tilde{k}^2}}\notag\\
			& +\left[\frac{3\theta^2(1.5\theta-1) {\sp c_2}}{(1.5\theta-2)n\mu^2}+\frac{6\theta L^2 \hV}{(1.5\theta-2)n\mu^2}\right]\frac{1}{\tilde{k}^2}\notag\\
			= &  \frac{\theta^2 {\sp c_2}}{(1.5\theta-1)n\mu^2}\frac{1}{\tilde{k}}+\mathcal{O}\left(\sp{\frac{A+B+n}{n(1-\rho_w)^2}}\right)\frac{1}{\tilde{k}^2}.
		\end{align}	
	\end{proof}
	
	{\sp Let $\frac{1}{n}\sum_{i=1}^n\bE[\|x_i(k)-x_*\|^2]$, the average optimization error for each agent to measure the performance of DSGD.} In the following theorem, we improve the result of Corollary~\ref{cor: U(k)_general_step} with further analysis {\sp and derive the main convergence result for Algorithm \eqref{eq: x_k}}. 
	\begin{theorem}
		\label{Thm: best rate}
		{\sp Suppose Assumptions \ref{asp: gradient samples}-\ref{asp: mu-L_convexity} hold.} 
		Under Algorithm \eqref{eq: x_k} with {\sp stepsize \eqref{Stepsize} and assuming} $\theta>2$, when $k\ge K_1-K$,
		\begin{multline}
			\label{Best rate}
			\frac{1}{n}\sum_{i=1}^n\bE[\|x_i(k)-x_*\|^2] \le \frac{\theta^2 \sp{\bar{M}}}{(2\theta-1)n\mu^2 \tilde{k}}\\ +\mathcal{O}\left(\sp{\frac{\sqrt{A+B+n}}{n(1-\rho_w)}}\right)\frac{1}{\tilde{k}^{1.5}}+\mathcal{O}\left(\sp{\frac{A+B+n}{n(1-\rho_w)^2}}\right)\frac{1}{\tilde{k}^2}.
		\end{multline}
	\end{theorem}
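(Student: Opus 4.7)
Plan. The Pythagorean identity $\|\mx(k)-\mathbf{1}x_*^{\T}\|^2=V(k)+n\|\ox(k)-x_*\|^2$ yields
\begin{equation*}
	\tfrac{1}{n}\textstyle\sum_{i=1}^{n}\bE[\|x_i(k)-x_*\|^2]=U(k)+V(k)/n,
\end{equation*}
so I would bound the two summands separately. The consensus contribution is already done: $V(k)/n\le \hV/(n\tilde k^2)$ by Lemma~\ref{lemma: prelim_rates_general_stepsize}, which via Lemma~\ref{lem: order_constants_general_step} equals $\mathcal{O}((A+B+n)/(n(1-\rho_w)^2))\tilde k^{-2}$, matching the third term of~\eqref{Best rate}. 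The work lies in sharpening Corollary~\ref{cor: U(k)_general_step}: the coefficient $c_2$ in the leading $\tilde k^{-1}$ term must shrink to $\bar M$, the factor $(1.5\theta-1)^{-1}$ must improve to $(2\theta-1)^{-1}$, and a new intermediate $\tilde k^{-3/2}$ term must be extracted.

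I would return to the \emph{unsplit} bound of Lemma~\ref{lem: optimization_error_contraction_pre} and, this time, refrain from the Young-type splitting that produced Lemma~\ref{lem: optimization_error_contraction}. Two substitutions do the work. First, Cauchy--Schwarz gives $\bE[\|\ox(k)-x_*\|\,\|\mx(k)-\mathbf{1}\ox(k)^{\T}\|]\le\sqrt{U(k)V(k)}$, and then the bound $V(k)\le\hV/\tilde k^2$ from Lemma~\ref{lemma: prelim_rates_general_stepsize} converts the cross term into $\frac{2\alpha_k L\sqrt{\hV}}{\sqrt n\,\tilde k}(1-\alpha_k\mu)\sqrt{U(k)}$. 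Second, the noise term $\alpha_k^2\tfrac{2ML^2}{n^2}\bE[\|\mx(k)-\mathbf{1}x_*^{\T}\|^2]$ is expanded via the exact identity $\bE[\|\mx(k)-\mathbf{1}x_*^{\T}\|^2]=nU(k)+V(k)$; the $nU(k)$ piece is absorbed into the coefficient of $U(k)$ as an $\mathcal{O}(\alpha_k^2)$ perturbation of $(1-\alpha_k\mu)^2$, while the $V(k)$ piece is negligibly $\mathcal{O}(\alpha_k^2\hV/(n^2\tilde k^2))$. The refined recursion becomes
\begin{equation*}
	U(k+1)\le (1-2\alpha_k\mu+\mathcal{O}(\alpha_k^2))U(k)+\tfrac{2\alpha_k L\sqrt{\hV}}{\sqrt n\,\tilde k}\sqrt{U(k)}+\tfrac{\alpha_k^2\bar M}{n}+\mathcal{O}\!\big(\tfrac{\alpha_k^2\hV}{n\tilde k^2}\big),
\end{equation*}
now with $\bar M$ (not $c_2$) as the noise prefactor and contraction rate $2\alpha_k\mu$ (not $\tfrac32\alpha_k\mu$).

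I would close the nonlinear $\sqrt{U(k)}$ term by bootstrapping on the ansatz $U(k)\le A_1/\tilde k+A_2/\tilde k^{3/2}+A_3/\tilde k^2$, using the rough bound $U(k)=\mathcal{O}(1/\tilde k)$ from Corollary~\ref{cor: U(k)_general_step} as the base. Substituting $\sqrt{U(k)}\le\sqrt{A_1/\tilde k}+\mathcal{O}(1/\tilde k)$ converts the cross term into an additive driver of order $\mathcal{O}(\sqrt{A_1\hV}/(\sqrt n\,\tilde k^{5/2}))$. Unrolling the recursion via Lemma~\ref{lem: product}, which gives $\prod_{t=K_0}^{k-1}(1-2\theta/t)\le (K_0/k)^{2\theta}$, and invoking the tail identity $\sum_{t=K_0}^{k-1}(t/k)^{2\theta}/t^{p}=\mathcal{O}(1/((2\theta-p+1)\,k^{p-1}))$ for $p\in\{2,\tfrac52\}$ (valid since $\theta>2$), the convolution of the noise $\alpha_k^2\bar M/n$ with the contraction pins $A_1=\theta^2\bar M/((2\theta-1)n\mu^2)$ exactly, while the convolution of the cross-term driver forces $A_2=\mathcal{O}(\sqrt{A_1\hV/n})$ and the residue produces $A_3=\mathcal{O}(\hV/n)$. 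Applying Lemma~\ref{lem: order_constants_general_step} to $\hV$ then converts $A_2$ into $\mathcal{O}(\sqrt{A+B+n}/(n(1-\rho_w)))$ and $A_3$ into $\mathcal{O}((A+B+n)/(n(1-\rho_w)^2))$, which combined with $V(k)/n$ reproduces~\eqref{Best rate}.

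The main obstacle is accomplishing all three improvements simultaneously. Any Young-type splitting of $\sqrt{U(k)V(k)}$ with a parameter constant in $\alpha_k$ either degrades the contraction back to $(1-\tfrac32\alpha_k\mu)$ (losing the $(2\theta-1)$ improvement) or leaves a prefactor on $V(k)$ that carries no $\alpha_k$, so that the $V(k)\le\hV/\tilde k^2$ magnitude convolves with the contraction to yield a spurious $\mathcal{O}(\hV/(n\tilde k))$ term rather than the desired $\tilde k^{-3/2}$ intermediate. Keeping the cross term as $\sqrt{U(k)}\sqrt{\hV}/\tilde k$ and closing the three-parameter induction is therefore unavoidable, and careful bookkeeping of the sums via Lemma~\ref{lem: product} is required to verify that the constants $A_1,A_2,A_3$ extracted from the convolution indeed satisfy the induction hypothesis.
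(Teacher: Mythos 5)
Your proposal is correct and follows essentially the same route as the paper: return to the un-split recursion of Lemma~\ref{lem: optimization_error_contraction_pre}, apply Cauchy--Schwarz to the cross term and the identity $\|\mx(k)-\mathbf{1}x_*^{\T}\|^2=V(k)+n\|\ox(k)-x_*\|^2$ to get the $(1-2\alpha_k\mu)$ contraction with $\bar{M}$ as the noise prefactor, feed in $V(k)\le \hV/\tilde{k}^2$ and the crude rate of Corollary~\ref{cor: U(k)_general_step}, unroll with Lemma~\ref{lem: product} (exponent $2\theta$, hence the $(2\theta-1)$ constant), and finish with Lemma~\ref{lem: order_constants_general_step} and the decomposition $U(k)+V(k)/n$. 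The only cosmetic difference is that the paper substitutes Corollary~\ref{cor: U(k)_general_step}'s bound directly into the $\sqrt{U V}$ and $\alpha_k^2 U$ terms and unrolls once, rather than closing a three-term ansatz by induction, but the resulting bookkeeping and constants are the same.
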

	\begin{proof}
		For $k\ge K_1-K$, in light of Lemma \ref{lem: contraction_mu-L_convexity} and Lemma \ref{lem: optimization_error_contraction_pre},
		\begin{equation*}
			\begin{split}
				& U(k+1)\\
				& \le (1-\alpha_k\mu)^2U(k)
				+\frac{2\alpha_k L}{\sqrt{n}}\bE[\|\ox(k)-x_*\|\|\mx(k)-\mathbf{1}\ox(k)^{\T}\|]\\
				& \quad +\frac{\alpha_k^2L^2}{n}V(k) +{\sp \alpha_k^2\left(\frac{2ML^2}{n^2}\bE[\|\mx(k)-\mathbf{1}x_*^{\T}\|^2]+\frac{\bar{M}}{n}\right)}\\
				& \le (1-\alpha_k\mu)^2U(k)
				+\frac{2\alpha_k L}{\sqrt{n}}\sqrt{U(k)V(k)}+\frac{\alpha_k^2L^2}{n}V(k)\\
				& \quad +{\sp \alpha_k^2\left[\frac{2ML^2}{n^2}(nU(k)+V(k))+\frac{\bar{M}}{n}\right]}\\
				& {\sp = (1-2\alpha_k\mu)U(k) + \alpha_k^2\left(\mu^2+\frac{2ML^2}{n}\right)U(k)}\\
				& \quad {\sp +\frac{2\alpha_k L}{\sqrt{n}}\sqrt{U(k)V(k)} + \frac{\alpha_k^2 L^2}{n}\left(1+\frac{2M}{n}\right)V(k)+\frac{\alpha_k^2\bar{M}}{n},}
			\end{split}
		\end{equation*}
		where the second inequality follows from the Cauchy-Schwarz inequality {\sp (see \cite{williams1991probability}, page 62) and the fact that $\|\mx(k)-\mathbf{1}x_*^{\T}\|^2=\|\mx(k)-\mathbf{1}\bar{x}(k)\|^2+n\|\bar{x}(k)-x_*\|^2$}.
		
		Recalling the definitions of $\tilde{U}(k)$ and $\tilde{V}(k)$, {\sp when $k\ge K_1$,
			\begin{multline*}
				\tilde{U}(k+1)\le  \left(1-\frac{2\theta}{k}\right)\tilde{U}(k)+\frac{\theta^2 }{k^2}\left(1+\frac{2ML^2}{n\mu^2}\right)\tilde{U}(k)\\
				+\frac{2\theta L}{\sqrt{n}\mu}\frac{\sqrt{\tilde{U}(k)\tilde{V}(k)}}{k}
				+\frac{\theta^2 L^2}{n\mu^2}\left(1+\frac{2M}{n}\right)\frac{\tilde{V}(k)}{k^2}+\frac{\theta^2\bar{M}}{n\mu^2}\frac{1}{k^2}.
		\end{multline*}}
		Therefore, {\sp by denoting $c_3 := 1+\frac{2ML^2}{n\mu^2}$ and $c_4 := 1+\frac{2M}{n}$, we have} \begin{small}
		\begin{align*}
			& \tilde{U}(k) \leq  \left(\prod_{t=K_1}^{k-1} \left(1-\frac{2\theta}{t}\right)\right) \tilde{U}(K_1) \\
			& \quad+ \sum_{t=K_1}^{k-1}\left(\prod_{i=t+1}^{k-1} \left(1-\frac{2\theta}{i}\right)\right) \\
			& \quad \cdot\left(\frac{\theta^2 \sp{\bar{M}}}{n\mu^2t^2} + \frac{\theta^2 {\sp c_3}\tilde{U}(t)}{t^2}+\frac{2\theta L}{\sqrt{n}\mu}\frac{\sqrt{\tilde{U}(t)\tilde{V}(t)}}{t}+\frac{\theta^2L^2 {\sp c_4}}{n\mu^2}\frac{\tilde{V}(t)}{t^2}\right)
		\end{align*} \end{small}
		From Lemma \ref{lem: product},
		\begin{equation*}
			\begin{split}
				& \tilde{U}(k) 
				\leq \frac{K_1^{2\theta}}{k^{2\theta}} \tilde{U}(K_1) + \sum_{t=K_1}^{k-1}\frac{(t+1)^{2\theta}}{k^{2\theta}}\left(\frac{\theta^2\sp{\bar{M}}}{n\mu^2t^2} + \frac{\theta^2 {\sp c_3}\tilde{U}(t)}{t^2}\right.\\
				& \quad \left.+\frac{2\theta L}{\sqrt{n}\mu}\frac{\sqrt{\tilde{U}(t)\tilde{V}(t)}}{t}+\frac{\theta^2L^2{\sp c_4}}{n\mu^2}\frac{\tilde{V}(t)}{t^2}\right)\\
				& = \frac{1}{k^{2\theta}}\frac{\theta^2\sp{\bar{M}}}{n\mu^2}\sum_{t=K_1}^{k-1}\frac{(t+1)^{2\theta}}{t^2}+\frac{K_1^{2\theta}}{k^{2\theta}} \tilde{U}(K_1)\\
				& \quad + \sum_{t=K_1}^{k-1}\frac{(t+1)^{2\theta}}{k^{2\theta}} \left(\frac{\theta^2{\sp c_3}\tilde{U}(t)}{t^2}+\frac{2\theta L}{\sqrt{n}\mu}\frac{\sqrt{\tilde{U}(t)\tilde{V}(t)}}{t}\right.\\
				& \quad \left.+\frac{\theta^2 L^2{\sp c_4}}{n\mu^2}\frac{\tilde{V}(t)}{t^2}\right).
			\end{split}
		\end{equation*}
		Hence,  by Corollary \ref{cor: U(k)_general_step},
		\begin{equation*}
			\begin{split}
				& \tilde{U}(k) -  \frac{1}{k^{2\theta}}\frac{\theta^2\sp{\bar{M}}}{n\mu^2}\sum_{t=K_1}^{k-1}\frac{(t+1)^{2\theta}}{t^2}-\frac{K_1^{2\theta}}{k^{2\theta}} \tilde{U}(K_1)\\
				& \le \frac{\theta^2 \sp{c_3}}{k^{2\theta}}\sum_{t=K_1}^{k-1}\frac{(t+1)^{2\theta}}{t^2} \left[\frac{\theta^2 {\sp c_2}}{(1.5\theta-1)n\mu^2 t}+ \frac{c}{t^2}\right]\\
				& \quad+\frac{1}{k^{2\theta}}\frac{2\theta L}{\sqrt{n}\mu}\sum_{t=K_1}^{k-1}\frac{(t+1)^{2\theta}}{t}\sqrt{\frac{\theta^2{\sp c_2}}{(1.5\theta-1)n\mu^2}\frac{1}{t}+ \frac{c}{t^2}}\sqrt{\frac{\hV}{t^2}}\\
				& \quad+\frac{1}{k^{2\theta}}\frac{\theta^2 L^2{\sp c_4}}{n\mu^2}\sum_{t=K_1}^{k-1}\frac{(t+1)^{2\theta}}{t^2}\frac{\hV}{t^2}.
			\end{split}
		\end{equation*}
		{\sp Since 
			\begin{equation*}
				\sqrt{\frac{\theta^2{\sp c_2}}{(1.5\theta-1)n\mu^2}\frac{1}{t}+ \frac{c}{t^2}}\sqrt{\frac{\hV}{t^2}}\le \sqrt{\frac{\theta^2{\sp c_2} \hV}{(1.5\theta-1)n\mu^2}\frac{1}{ t^3}} + \frac{\sqrt{c\hV}}{t^2},
			\end{equation*}
			we have}
		\begin{equation*}
			\begin{split}
				& \tilde{U}(k) -  \frac{1}{k^{2\theta}}\frac{\theta^2\sp{\bar{M}}}{n\mu^2}\sum_{t=K_1}^{k-1}\frac{(t+1)^{2\theta}}{t^2}-\frac{K_1^{2\theta}}{k^{2\theta}} \tilde{U}(K_1)\\
				& \le \frac{\theta^2{\sp c_3}}{k^{2\theta}}\sum_{t=K_1}^{k-1}\frac{(t+1)^{2\theta}}{t^2} \left[\frac{\theta^2{\sp c_2}}{(1.5\theta-1)n\mu^2}\frac{1}{t}+ \frac{c}{t^2}\right]\\
				& \quad +\frac{1}{k^{2\theta}}\frac{2\theta L}{\sqrt{n}\mu}\sum_{t=K_1}^{k-1}\frac{(t+1)^{2\theta}}{t}\left(\sqrt{\frac{\theta^2{\sp c_2} \hV}{(1.5\theta-1)n\mu^2}\frac{1}{ t^3}} + \frac{\sqrt{c\hV}}{t^2}\right)\\
				& \quad+\frac{1}{k^{2\theta}}\frac{\theta^2 L^2{\sp c_4} \hV}{n\mu^2}\sum_{t=K_1}^{k-1}\frac{(t+1)^{2\theta}}{t^4}\\
				& =\frac{1}{k^{2\theta}}\left(\frac{2\theta^2 L\sigma\sqrt{{\sp c_2}\hV}}{\sqrt{1.5\theta-1}n\mu^2}\right)\sum_{t=K_1}^{k-1}\frac{(t+1)^{2\theta}}{t^{2.5}}\\
				& \quad+\frac{1}{k^{2\theta}}\left(\frac{\theta^4{\sp c_3 c_2}}{(1.5\theta-1)n\mu^2}+\frac{2\theta L\sqrt{c \hV}}{\sqrt{n}\mu}\right)\sum_{t=K_1}^{k-1}\frac{(t+1)^{2\theta}}{t^3}\\
				& \quad+ \frac{1}{k^{2\theta}}\left(\theta^2 {\sp c_3}c+\frac{\theta^2L^2 {\sp c_4}\hV}{n\mu^2}\right)\sum_{t=K_1}^{k-1}\frac{(t+1)^{2\theta}}{t^4}.
			\end{split}
		\end{equation*}
		{\sp Notice that $c_2=\mathcal{O}(\frac{A+B+n}{n})$ and $c_3,c_4=\mathcal{O}(1)$.}
		Following a discussion similar to those in the proofs for Theorem \ref{lem: rates_general_stepsize} and Corollary \ref{cor: U(k)_general_step}, we have
		\begin{equation*}
			\begin{split}
				\tilde{U}(k) \le & \frac{\theta^2\sp{\bar{M}}}{(2\theta-1)n\mu^2 k} +\mathcal{O}\left(\sp{\frac{\sqrt{A+B+n}}{n(1-\rho_w)}}\right)\frac{1}{k^{1.5}}\\
				& +\mathcal{O}\left(\sp{\frac{A+B+n}{n(1-\rho_w)^2}}\right)\frac{1}{k^2}+ \mathcal{O}\left(\sp{\frac{A+B+n}{n(1-\rho_w)^2}}\right)\frac{1}{k^3}\\
				& + \mathcal{O}\left(\sp{\frac{A+B+n}{n(1-\rho_w)^{2\theta}}}\right)\frac{1}{k^{2\theta}}\\
				= 	& \frac{\theta^2\sp{\bar{M}}}{(2\theta-1)n\mu^2 k} +\mathcal{O}\left(\sp{\frac{\sqrt{A+B+n}}{n(1-\rho_w)}}\right)\frac{1}{k^{1.5}}\\
				& +\mathcal{O}\left(\sp{\frac{A+B+n}{n(1-\rho_w)^2}}\right)\frac{1}{k^2}.
			\end{split}
		\end{equation*}
		{\sp Noting that
			\begin{align*}
				& \frac{1}{n}\sum_{i=1}^n\bE[\|x_i(k)-x_*\|^2]\\
				&=\bE[\|\bar{x}(k)-x_*\|^2]+\frac{1}{n}\sum_{i=1}^n\bE[\|x_i(k)-\bar{x}\|^2]\\
				& =U(k)+\frac{V(k)}{n},
			\end{align*}
			and $U(k)=\tilde{U}(k+K)$, in light of the bound on $V(k)$ in Lemma \ref{lemma: prelim_rates_general_stepsize} and the estimate of $\hV$ in Lemma \ref{lem: order_constants_general_step}}, we obtain the desired result.
	\end{proof}
	
	\subsection{Comparison with Centralized Implementation}
	
	We compare the performance of DSGD and centralized stochastic gradient descent (SGD) stated below:
	\begin{equation}
		\label{eq: centralized}
		x(k+1)=x(k)-\alpha_k \tilde{g}(k),
	\end{equation}
	where $\alpha_k:=\frac{\theta}{\mu k}$ ($\theta>1$) and $\tilde{g}(k):=\frac{1}{n}\sum_{i=1}^n g(x(k),\xi_i(k))$. 
	
	First, we derive the convergence rate for SGD which matches the optimal rate for such stochastic gradient methods (see \cite{nemirovski2009robust,rakhlin2012making}). Our result relies on an analysis different from the literature that considered a compact feasible set and uniformly bounded stochastic gradients in expectation.
	
	\begin{theorem}
		\label{Thm: centralized}
		Under the centralized stochastic gradient descent of (\ref{eq: centralized}), suppose $k\ge K_2:=\left\lceil\frac{\theta L}{\mu}\right\rceil$. We have
		\begin{align*}
			\bE[\|x(k)-x_*\|^2]  
			\leq  \frac{\theta^2\sp{\bar{M}}}{(2\theta-1)n\mu^2 k}+\mathcal{O}\left(\frac{1}{n}\right)\frac{1}{k^2}.
		\end{align*}
	\end{theorem}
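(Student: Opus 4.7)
The plan is to follow the classical SGD template while carefully tracking the factor of $1/n$ coming from averaging $n$ independent gradient samples. First, I would establish the analogue of Lemma \ref{lem: oy_k-h_k} for the centralized iterate: conditional on $\mF(k)$, the averaged sample $\tilde g(k)$ is unbiased for $\nabla f(x(k))$, and its variance is
\[
\bE[\|\tilde g(k)-\nabla f(x(k))\|^2\mid\mF(k)]\le \frac{2ML^2}{n}\|x(k)-x_*\|^2+\frac{\bar M}{n},
\]
obtained by applying Assumption \ref{asp: gradient samples} componentwise, using $\|\nabla f_i(x(k))\|^2\le 2L^2\|x(k)-x_*\|^2+2\|\nabla f_i(x_*)\|^2$, and invoking independence of $\{\xi_i(k)\}_{i=1}^n$ to gain the $1/n$ factor. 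This is the key place where the $n$-sample averaging surfaces.

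Next, I would decompose
\[
\|x(k+1)-x_*\|^2=\|x(k)-\alpha_k\nabla f(x(k))-x_*\|^2+\alpha_k^2\|\tilde g(k)-\nabla f(x(k))\|^2+\text{cross term},
\]
take conditional expectation (killing the cross term), and apply Lemma \ref{lem: contraction_mu-L_convexity} to the deterministic part, which is legal because $\alpha_k=\theta/(\mu k)\le 1/L$ exactly when $k\ge K_2$. Denoting $S(k):=\bE[\|x(k)-x_*\|^2]$, this yields the scalar recursion
\[
S(k+1)\le \Bigl(1-\tfrac{2\theta}{k}+\tfrac{\theta^2}{k^2}\bigl(1+\tfrac{2ML^2}{n\mu^2}\bigr)\Bigr)S(k)+\frac{\theta^2\bar M}{n\mu^2 k^2}.
\]

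The third step is the induction that delivers the claimed bound. I would first argue a uniform bound on $S(k)$ (by an argument analogous to Lemma \ref{lem: bounded_iterates_estimate_general_step}, or even easier since there is a single iterate), which turns the $\theta^2/k^2$ term multiplying $S(k)$ into a harmless additive $\mathcal{O}(1/(nk^2))$ contribution. Then I would guess $S(k)\le \frac{C_1}{nk}+\frac{C_2}{nk^2}$ with $C_1=\frac{\theta^2\bar M}{2\theta-1}\cdot\frac{1}{\mu^2}$ and prove it inductively. The choice of $C_1$ is forced by matching the leading order: substituting the ansatz into the recursion, the $1/k^2$ coefficients must satisfy $(2\theta-1)C_1\ge \theta^2\bar M/\mu^2$ (plus absorbing the $\mathcal{O}(1/k^2)$ residual into $C_2$, which requires $\theta>1/2$; the hypothesis $\theta>1$ already guarantees $2\theta-1>0$).

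The main obstacle is purely bookkeeping: tracking the $S(k)/k^2$ self-feedback term cleanly so that it can be absorbed into the $C_2/(nk^2)$ correction rather than contaminating the leading $C_1/(nk)$ term. This requires that the a priori uniform bound on $S(k)$ scale as $\mathcal{O}(1/n)$ (in a sense analogous to Lemma \ref{lem: UVW(K_1) bounds}), so that the coefficient of the secondary $1/k^2$ term inherits the correct $1/n$ prefactor. Once that is in place, the induction closes and gives the stated inequality for all $k\ge K_2$.
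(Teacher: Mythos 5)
Your steps 1--2 coincide with the paper's proof: the same variance bound $\frac{2ML^2}{n}\|x(k)-x_*\|^2+\frac{\bar M}{n}$ for the averaged gradient (this is exactly Lemma \ref{lem: oy_k-h_k} applied with all agents at the common iterate), the same decomposition with the cross term killed by unbiasedness, Lemma \ref{lem: contraction_mu-L_convexity} for the deterministic part (valid since $\alpha_k\le 1/L$ for $k\ge K_2$), and the same scalar recursion with coefficient $1-\frac{2\theta}{k}+\frac{\theta^2}{k^2}\bigl(1+\frac{2ML^2}{n\mu^2}\bigr)$.

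The gap is in step 3. If you only have a \emph{uniform} (non-decaying) bound $S(k)\le \hat S=\mathcal{O}(1/n)$, the self-feedback term $\frac{\theta^2}{k^2}\bigl(1+\frac{2ML^2}{n\mu^2}\bigr)S(k)$ becomes an additive term of order $\frac{1}{nk^2}$ --- the \emph{same} order as the driving noise term $\frac{\theta^2\bar M}{n\mu^2 k^2}$. In the induction with the ansatz $S(k)\le\frac{C_1}{nk}+\frac{C_2}{nk^2}$, the only slack available at order $\frac{1}{k^2}$ comes from the leading term and equals roughly $\frac{(2\theta-1)C_1}{nk^2}$; the slack generated by the $C_2$ component is only of order $\frac{(2\theta-2)C_2}{nk^3}$, so it can absorb $1/k^3$ residuals but \emph{not} a $1/k^2$ residual. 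Hence the extra $\mathcal{O}(1/(nk^2))$ self-feedback must be charged against $(2\theta-1)C_1$, forcing $(2\theta-1)C_1\ge \frac{\theta^2\bar M}{\mu^2}+\mathcal{O}(1)$ and contaminating the leading constant, whereas the theorem requires exactly $\frac{\theta^2\bar M}{(2\theta-1)n\mu^2 k}$. The paper avoids this by a two-stage bootstrap: it first establishes the \emph{decaying} preliminary bound $\bE[\|x(k)-x_*\|^2]\le \frac{c_5}{k}$ with $c_5=\mathcal{O}(1/n)$ (by the argument of Lemma \ref{lem: rates_general_stepsize}), so the self-feedback term becomes $\frac{\theta^2\bar c_5}{t^3}=\mathcal{O}\bigl(\frac{1}{nt^3}\bigr)$, and only then unrolls the recursion and applies Lemma \ref{lem: product} to the products $\prod(1-\frac{2\theta}{t})$, which isolates the exact constant $\frac{\theta^2\bar M}{(2\theta-1)n\mu^2 k}$ and pushes everything else into the $\mathcal{O}(1/n)\frac{1}{k^2}$ correction. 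Your argument is repairable by inserting this intermediate $\mathcal{O}(1/(nk))$ rate (or by running your induction twice, feeding the first-pass decaying bound back in), but as written the absorption claim fails. A minor additional point: absorbing $1/k^3$ residuals into $C_2/(nk^2)$ needs $\theta>1$ (slack coefficient $2\theta-2$), not merely $\theta>1/2$; this is harmless under the standing assumption on $\theta$.
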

	\begin{proof}
		Noting that $\alpha_k\le 1/L$ when $k\ge K_2$, {\sp we have from Lemma \ref{lem: oy_k-h_k} that}
		\begin{equation}
			\label{centralized_first_inequality}
			\begin{split}
				& \bE[\|x(k+1)-x_*\|^2\mid \mF(k)]\\
				& =  \bE[\|x(k)-\alpha_k \tilde{g}(k)-x_*\|^2\mid \mF(k)]\\
				& = \|x(k)-\alpha_k \nabla f(x(k))-x_*\|^2+\alpha_k^2\bE[\|\nabla f(x(k))-\tilde{g}(k)\|^2]\\
				& \le  (1-\alpha_k\mu)^2\|x(k)-x_*\|^2\\
				& \quad +\alpha_k^2 \sp{\left(\frac{2ML^2}{n}\|x(k)-x_*\|^2+\frac{\bar{M}}{n}\right)}\\
				& = \left(1-\frac{2\theta}{k}\right)\|x(k)-x_*\|^2\\
				&\quad +\theta^2\left(1{\sp+\frac{2M L^2}{n\mu^2}}\right)\|x(k)-x_*\|^2\frac{1}{k^2}+\frac{\theta^2\sp{\bar{M}}}{n\mu^2}\frac{1}{k^2}.
			\end{split}
		\end{equation} 
		It can be shown first that $\bE[\|x(k)-x_*\|^2]\le \frac{c_5}{k}$ for $k\ge K_2$, where $c_5=\mathcal{O}(\frac{1}{n})$.\footnote{The argument here is similar to that in the proof for {\sp Lemma \ref{lem: rates_general_stepsize}}.} {\sp Denote $\bar{c}_5 := (1+\frac{2M L^2}{n\mu^2})c_5$.}  Then from relation (\ref{centralized_first_inequality}), when $k\ge K_2$,
		\begin{equation*}
			\begin{split}
				\bE[\|x(k)-x_*\|^2] \leq \left(\prod_{t=K_2}^{k-1} \left(1-\frac{2\theta}{t}\right)\right) \bE[\|x(K_2)-x_*\|^2] \\
				+ \sum_{t=K_2}^{k-1}\left(\prod_{i=t+1}^{k-1} \left(1-\frac{2\theta}{i}\right)\right) \left(\frac{\theta^2\sp{\bar{M}}}{n\mu^2t^2} + \frac{\theta^2 \sp{\bar{c}_5}}{t^3}\right).
			\end{split}
		\end{equation*}
		From Lemma \ref{lem: product},
		\begin{equation*}
			\begin{split}
				& \bE[\|x(k)-x_*\|^2]  
				\leq  \frac{K_2^{2\theta}}{k^{2\theta}} \bE[\|x(K_2)-x_*\|^2] \\
				& \quad+ \sum_{t=K_2}^{k-1}\frac{(t+1)^{2\theta}}{k^{2\theta}}\left(\frac{\theta^2\sp{\bar{M}}}{n\mu^2t^2} + \frac{\theta^2 \sp{\bar{c}_5}}{t^3}\right)\\
				& = \frac{1}{k^{2\theta}} \frac{\theta^2\sp{\bar{M}}}{n\mu^2} \sum_{t=K_2}^{k-1}\frac{(t+1)^{2\theta}}{t^2}+\frac{K_2^{2\theta}}{k^{2\theta}} \bE[\|x(K_2)-x_*\|^2] \\
				& \quad+\frac{\theta^2 \sp{\bar{c}_5}}{k^{2\theta}} \sum_{t=K_2}^{k-1}\frac{(t+1)^{2\theta}}{t^3}\\
				& = \frac{\theta^2\sp{\bar{M}}}{(2\theta-1)n\mu^2 k}+\mathcal{O}\left(\frac{1}{n}\right)\frac{1}{k^2}.
			\end{split}
		\end{equation*}
	\end{proof}
	
	Comparing the results of Theorem \ref{Thm: best rate} and Theorem \ref{Thm: centralized}, we can see that asymptotically, DSGD and SGD have the same convergence rate $\frac{\theta^2\sp{\bar{M}}}{(2\theta-1)n\mu^2 k}$. The next corollary identifies the time needed for DSGD to achieve this rate.
	\begin{corollary}[Transient Time]
		\label{cor: transient_time}
		{\sp Suppose Assumptions \ref{asp: gradient samples}-\ref{asp: mu-L_convexity} hold.} 
			Assume in addition that $\sum_{i=1}^{n}\|x_i(0)-x_*\|^2=\mathcal{O}(n)$ and $\sum_{i=1}^{n}\|\nabla f_i(x_*)\|^2=\mathcal{O}(n)$.
		It takes $K_T=\mathcal{O}\left(\frac{n}{(1-\rho_w)^2}\right)$ time for Algorithm \eqref{eq: x_k} to reach the asymptotic rate of convergence, i.e., when $k\ge K_T$, we have  $\frac{1}{n}\sum_{i=1}^n \bE[\|x_i(k) \linebreak[3] - x_*\|^2]\le \frac{\theta^2\sp{\bar{M}}}{(2\theta-1)n\mu^2 k}\mathcal{O}(1)$.
	\end{corollary}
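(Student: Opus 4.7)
The plan is to derive Corollary \ref{cor: transient_time} as a direct consequence of Theorem \ref{Thm: best rate} by balancing the three terms in the bound \eqref{Best rate} under the additional hypotheses $A=\sum_{i=1}^n\|x_i(0)-x_*\|^2=\mathcal{O}(n)$ and $B=\sum_{i=1}^n\|\nabla f_i(x_*)\|^2=\mathcal{O}(n)$. First I would substitute these order estimates into Theorem \ref{Thm: best rate}: since $A+B+n=\mathcal{O}(n)$, the higher-order coefficients in \eqref{Best rate} simplify so that the bound takes the form
\begin{equation*}
\frac{1}{n}\sum_{i=1}^n \bE[\|x_i(k)-x_*\|^2] \le \frac{\theta^2\bar{M}}{(2\theta-1)n\mu^2 \tilde{k}} + \mathcal{O}\!\left(\frac{1}{\sqrt{n}(1-\rho_w)}\right)\frac{1}{\tilde{k}^{1.5}} + \mathcal{O}\!\left(\frac{1}{(1-\rho_w)^2}\right)\frac{1}{\tilde{k}^2}.
\end{equation*}

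The goal then is to determine when each of the last two (network-dependent) terms is dominated by a constant multiple of the asymptotic term $\frac{\theta^2\bar{M}}{(2\theta-1)n\mu^2\tilde{k}}$. For the second term, I would require
$\frac{1}{\sqrt{n}(1-\rho_w)\tilde{k}^{1.5}} \le \frac{C}{n\tilde{k}}$,
which rearranges to $\tilde{k}\ge \mathcal{O}\!\left(\frac{n}{(1-\rho_w)^2}\right)$. For the third term, the requirement $\frac{1}{(1-\rho_w)^2\tilde{k}^2}\le \frac{C}{n\tilde{k}}$ yields exactly the same threshold $\tilde{k}\ge\mathcal{O}\!\left(\frac{n}{(1-\rho_w)^2}\right)$. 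Taking the maximum of these two thresholds (and noting it dominates the prerequisite $k\ge K_1-K=\mathcal{O}(1/(1-\rho_w))$ from Lemma \ref{lemma: prelim_rates_general_stepsize}) gives $K_T=\mathcal{O}\!\left(\frac{n}{(1-\rho_w)^2}\right)$, after which the right-hand side reduces to $\frac{\theta^2\bar{M}}{(2\theta-1)n\mu^2 k}\mathcal{O}(1)$ as claimed.

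There is no real obstacle here: the proof is essentially algebraic bookkeeping, and the substantive work has already been carried out in Theorem \ref{Thm: best rate} and in Lemma \ref{lem: order_constants_general_step} (which is what allowed the coefficients to be expressed in terms of $A$, $B$, and $n$). The only point that deserves a brief remark is the role of the assumption $A,B=\mathcal{O}(n)$: it is precisely this scaling that makes the $\sqrt{A+B+n}/n$ coefficient of the $\tilde{k}^{-1.5}$ term shrink like $1/\sqrt{n}$ and the $(A+B+n)/n$ coefficient of the $\tilde{k}^{-2}$ term shrink like a constant, so that both network-dependent terms are balanced against the leading $1/(n\tilde{k})$ rate at the same threshold $\tilde{k}\asymp n/(1-\rho_w)^2$. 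Without this normalization the two terms would impose different thresholds and the clean bound would not hold.
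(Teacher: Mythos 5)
Your proposal is correct and follows essentially the same route as the paper: substitute $A,B=\mathcal{O}(n)$ into the bound of Theorem \ref{Thm: best rate} and determine when the two network-dependent higher-order terms are dominated by (a constant multiple of) the leading $\frac{\theta^2\bar{M}}{(2\theta-1)n\mu^2 k}$ term, both comparisons yielding the threshold $k=\mathcal{O}\left(\frac{n}{(1-\rho_w)^2}\right)$. The paper phrases this by factoring out the leading term and requiring the bracketed correction to be $\mathcal{O}(1)$, which is the same calculation; your additional check that this threshold dominates $K_1-K=\mathcal{O}(1/(1-\rho_w))$ is a harmless extra detail.
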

	\begin{proof}
		From (\ref{Best rate}),
		\begin{multline*}
			\frac{1}{n}\sum_{i=1}^n\bE[\|x_i(k)-x_*\|^2] \le \frac{\theta^2\sp{\bar{M}}}{(2\theta-1)n\mu^2 k}\\
			\cdot\left[1+\mathcal{O}\left(\frac{\sqrt{n}}{(1-\rho_w)}\right)\frac{1}{k^{0.5}} +\mathcal{O}\left(\frac{n}{(1-\rho_w)^2}\right)\frac{1}{k}\right].
		\end{multline*}
		Let $K_T$ be such that
		\begin{equation*}
			\mathcal{O}\left(\frac{\sqrt{n}}{(1-\rho_w)}\right)\frac{1}{K_T^{0.5}} +\mathcal{O}\left(\frac{n}{(1-\rho_w)^2}\right)\frac{1}{K_T}=\mathcal{O}(1).
		\end{equation*}
		We then obtain that 
		\begin{equation*}
			K_T=\mathcal{O}\left(\frac{n}{(1-\rho_w)^2}\right).
		\end{equation*}
	\end{proof}
	\begin{remark}
		{\sp By assuming the additional conditions $\sum_{i=1}^{n}\|x_i(0)-x_*\|^2=\mathcal{O}(n)$ and $\sum_{i=1}^{n}\|\nabla f_i(x_*)\|^2=\mathcal{O}(n)$,  motivated by the observation that each of these expression is the sum of $n$ terms, we obtain a cleaner expression of the transient time. In general, we would obtain $K_T=\mathcal{O}\left(\frac{A+B+n}{(1-\rho_w)^2}\right)$.}
	\end{remark}
	\begin{remark}
		{\sp For general connected networks such as line graphs}, if we adopt the Lazy Metropolis rule for choosing the weights $[w_{ij}]$ (see \cite{olshevsky2017linear}), then $\frac{1}{1-\rho_w}=\mathcal{O}(n^2)$, and hence $K_T=\mathcal{O}(n^5)$. {\sp The transient time can be improved for networks with special structures. For example, $\frac{1}{1-\rho_w}$ is constant with high probability for a random Erd\H{o}s-R\'{e}nyi random graph, and consequently $K_T=\mathcal{O}(n)$ on such a graph.}
	\end{remark}

	The next theorem states that the transient time for DSGD to reach the asymptotic convergence rate is lower bounded by $\Omega\left(\frac{n}{(1-\rho_w)^2}\right)$, {\sp that is, under Assumptions \ref{asp: gradient samples}-\ref{asp: mu-L_convexity} and assuming $\sum_{i=1}^{n}\|x_i(0)-x_*\|^2=\mathcal{O}(n)$ and $\sum_{i=1}^{n}\|\nabla f_i(x_*)\|^2=\mathcal{O}(n)$, there exists an optimization problem whose transient time under DSGD is lower bounded by $\Omega\left(\frac{n}{(1-\rho_w)^2}\right)$. This implies the result in Corollary \ref{cor: transient_time} is sharp and can not be improved in general.}
	\begin{theorem}
		\label{Thm: sharpness}
		Suppose Assumptions \ref{asp: gradient samples}-\ref{asp: mu-L_convexity} hold. {\sp Assume in addition that $\sum_{i=1}^{n}\|x_i(0)-x_*\|^2=\mathcal{O}(n)$ and $\sum_{i=1}^{n}\|\nabla f_i(x_*)\|^2=\mathcal{O}(n)$. Then there exists a $\rho_0 \in (0,1)$ such that if $\rho_w \geq \rho_0$, then} the time needed for DSGD to reach the asymptotic convergence rate is lower bounded by $\Omega\left(\frac{n}{(1-\rho_w)^2}\right)$.
	\end{theorem}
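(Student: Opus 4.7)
The plan is to exhibit an explicit one-dimensional quadratic instance whose deterministic mean dynamics produce a persistent consensus bias aligned with the slowest-mixing direction of $\mW$. Take $p=1$ and $f_i(x)=\frac{1}{2}(x-b_i)^2$, so $\mu=L=1$ and Assumption~\ref{asp: mu-L_convexity} holds. Let $v\in\mathbb{R}^n$ be a unit eigenvector of $\mW-\frac{1}{n}\mathbf{1}\mathbf{1}^{\T}$ achieving its spectral norm $\rho_w$; by double stochasticity $v\perp\mathbf{1}$ and $\mW v=\pm\rho_w v$ (we handle the $+$ case, the other being symmetric). Set $b_i=\sqrt{n}\,v_i$, so $\sum_i b_i=0$ (hence $x_*=0$), $\sum_i\|\nabla f_i(x_*)\|^2=\|b\|^2=n$, and the initialization $\mx(0)=\mathbf{0}$ yields $\sum_i\|x_i(0)-x_*\|^2=0$; both side conditions of the theorem are satisfied. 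Finally take $M=0$ and some $\sigma^2>0$ (e.g.\ i.i.d.\ Gaussian noise), fulfilling Assumption~\ref{asp: gradient samples}.

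Next I would track the mean consensus error. Writing $\mx^{\perp}(k):=\mx(k)-\mathbf{1}\ox(k)^{\T}$ and using $\mx(k+1)=\mW((1-\alpha_k)\mx(k)+\alpha_k b-\alpha_k\boldsymbol{\xi}(k))$ together with $\mW\mathbf{1}=\mathbf{1}$ and $b\perp\mathbf{1}$, a direct computation gives
\begin{equation*}
    \bE[\mx^{\perp}(k+1)]=(1-\alpha_k)\mW\,\bE[\mx^{\perp}(k)]+\alpha_k\mW b.
\end{equation*}
Since $\bE[\mx^{\perp}(0)]=0$ and $b\in\mathrm{span}(v)$, the mean iterate stays in $\mathrm{span}(v)$; writing $\bE[\mx^{\perp}(k)]=c_k v$ reduces everything to the scalar recursion
\begin{equation*}
    c_{k+1}=\rho_w\bigl((1-\alpha_k)c_k+\alpha_k\sqrt{n}\bigr),\qquad c_0=0.
\end{equation*}
The slowly-varying ``quasi-fixed point'' of this recursion is $c_k^{\star}=\frac{\alpha_k\rho_w\sqrt{n}}{1-\rho_w(1-\alpha_k)}$. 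Choosing $\rho_0$ close enough to $1$ and carrying out an induction, I can show that for all $k$ beyond a threshold $k_0=\Theta\!\bigl(1/(1-\rho_w)\bigr)$,
\begin{equation*}
    c_k\ge \tfrac{1}{2}\,\frac{\alpha_k\sqrt{n}}{1-\rho_w}=\Omega\!\left(\frac{\sqrt{n}}{(1-\rho_w)\,k}\right),
\end{equation*}
provided $\rho_w\ge\rho_0$, where both monotonicity of the recursion and the fact that $\alpha_k$ changes slowly relative to the mixing timescale $1/(1-\rho_w)$ are used.

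To conclude, I would apply the bias/variance decomposition $\bE[\|\mx^{\perp}(k)\|^2]\ge\|\bE[\mx^{\perp}(k)]\|^2=c_k^2$, which gives
\begin{equation*}
    \frac{1}{n}\sum_{i=1}^n\bE[\|x_i(k)-x_*\|^2]\;\ge\;\frac{V(k)}{n}\;\ge\;\frac{c_k^2}{n}\;=\;\Omega\!\left(\frac{1}{(1-\rho_w)^2\,k^2}\right).
\end{equation*}
Theorem~\ref{Thm: centralized} shows the centralized asymptotic rate is $\Theta(\bar M/(nk))$, so requiring the distributed error to be at most a constant multiple of it forces
\begin{equation*}
    \frac{1}{(1-\rho_w)^2 k^2}=O\!\left(\frac{1}{nk}\right),\quad\text{i.e.,}\quad k=\Omega\!\left(\frac{n}{(1-\rho_w)^2}\right),
\end{equation*}
which matches the upper bound of Corollary~\ref{cor: transient_time}.

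The main obstacle will be the matching lower bound $c_k\ge\Omega(\alpha_k\sqrt{n}/(1-\rho_w))$: the one-step contraction $(1-\alpha_k)\rho_w$ and the forcing $\alpha_k\rho_w\sqrt n$ both vary with $k$, so one cannot simply freeze $\alpha_k$ and invoke geometric convergence. The cleanest path is probably to compare $c_k$ with $c_k^{\star}$ via the identity $c_{k+1}-c_{k+1}^{\star}=(1-\alpha_k)\rho_w(c_k-c_k^{\star})+(c_k^{\star}-c_{k+1}^{\star})$, bound $|c_k^{\star}-c_{k+1}^{\star}|=O(\alpha_k^2\sqrt n/(1-\rho_w)^2)$, and show inductively that the relative gap $|c_k-c_k^{\star}|/c_k^{\star}$ remains bounded by a small constant once $k\ge k_0$; this is exactly the step that needs $\rho_w$ above a threshold $\rho_0$, explaining that hypothesis in the statement.
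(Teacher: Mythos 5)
Your construction is, at its core, the same ``hard'' instance as the paper's: one-dimensional quadratics whose heterogeneity vector is aligned with an eigenvector of $\mW$ attaining the extreme eigenvalue $\rho_w$, zero-mean gradient noise discarded via a bias--variance argument, a mean consensus bias of order $\sqrt{n}\,\alpha_k/(1-\rho_w)$, hence $V(k)/n=\Omega\big(1/((1-\rho_w)^2k^2)\big)$, and finally a comparison with the centralized $\Theta(\bar M/(nk))$ rate to force $k=\Omega\big(n/(1-\rho_w)^2\big)$. Where you genuinely differ is in how the scalar bias recursion $c_{k+1}=\rho_w\big((1-\alpha_k)c_k+\alpha_k\sqrt n\big)$ is lower bounded: the paper avoids any induction by unrolling the recursion explicitly, lower bounding the products $\prod_j(1-\alpha_j)$ via Lemma \ref{lem: product}, estimating the resulting sum by an integral, and invoking $-\ln\rho_w\approx 1-\rho_w$ as $\rho_w\to1$ (this Taylor step is exactly where the threshold $\rho_0$ enters), whereas you propose tracking the quasi-fixed point $c_k^\star$ by an inductive drift-versus-contraction argument. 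Your route is workable (and your identification of this step as the crux is accurate), but it is left as a sketch, and the paper's closed-form unrolling is arguably the cleaner way to nail the constant; also note your drift estimate should be $|c_k^\star-c_{k+1}^\star|=\mathcal{O}\big(\alpha_k^2\sqrt n/(1-\rho_w)\big)$, which makes the induction kick in already at $k_0=\Theta(1/(1-\rho_w))$. One small inaccuracy: the case $\mW v=-\rho_w v$ is \emph{not} symmetric --- with an alternating sign the forcing does not accumulate and $|c_k|$ is only $\Theta(\alpha_k\sqrt n)$, so that case does not yield the $1/(1-\rho_w)^2$ factor. This does not sink your proof, since the theorem asserts the existence of a hard instance and you (like the paper, which assumes $\mW\mx_*=\rho_w\mx_*$) may simply choose $\mW$ so that the spectral norm is attained at the positive eigenvalue, but the remark should be corrected rather than waved off as symmetry.
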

	\begin{proof}
		We construct a ``hard'' optimization to prove the claimed result, inspired by \cite{duchi2012dual}. Consider quadratic objective functions $f_i(x):=\frac{1}{2}\|x-x_i^*\|^2$, where $x,x_i^*\in\mathbb{R}$. The optimal solution to Problem (\ref{opt Problem_def}) is given by $x_*=\frac{1}{n}\sum_{i=1}^n x_i^*$.
		The DSGD algorithm implements:
		\begin{equation}
			\label{DSGD_quardratic}
			\mx(k+1) = \mW\left(\mx(k)-\alpha_k (\mx(k)-\mx_*)+\alpha_k\mathbf{n}(k)\right),
		\end{equation}
		where $\mx_*:=[x_1^*,x_2^*,\ldots,x_n^*]^{\T}$, and $\mathbf{n}(k)$ denotes the vector of gradient noise terms. From (\ref{Stepsize}), we use stepsize $\alpha_k=\frac{\theta}{k+K}$ ($\theta>2$), where $K=\lceil2\theta\rceil$  since $\mu=L=1$.
		We rewrite (\ref{DSGD_quardratic}) as
		\begin{equation*}
			\mx(k+1) = (1-\alpha_k)\mW\mx(k) + \alpha_k\mW\mx_*+\alpha_k\mW\mathbf{n}(k).
		\end{equation*}
		It follows that
		\begin{multline*}
			\mx(k+1)-\mathbf{1}\ox(k+1)^{\T} = (1-\alpha_k)\mW(\mx(k)-\mathbf{1}\ox(k)^{\T}) \\
			+ \alpha_k\mW(\mx_*-\mathbf{1}x_*)+\alpha_k\mW(\mathbf{n}(k)-\mathbf{1}\on(k)).
		\end{multline*}
		By induction, we have for all $k>0$,
		\begin{align}
			\label{DSGD_quardratic_induction}
			\begin{split}
				\mx(k)-\mathbf{1}\ox(k)^{\T}= & \left(\prod_{t=0}^{k-1}(1-\alpha_t)\right)\mW^k(\mx(0)-\mathbf{1}\ox(0))\\
				& +\sum_{t=0}^{k-1}\left[\left(\prod_{j=t+1}^{k-1}(1-\alpha_j)\right)\alpha_t\mW^{k-t}\right]\\
				& \cdot\left[(\mx_*-\mathbf{1}x_*)+(\mathbf{n}(t)-\mathbf{1}\on(t))\right].
			\end{split}
		\end{align}
		
		Assume that: (i) the matrix $\mW$ is symmetric; (ii) $\mW \mx_* = \rho_w\mx_*$, i.e., $\mx_*$ is an eigenvector of $\mW$ w.r.t. eigenvalue $\rho_w$ (hence $x_*=\frac{1}{n}\mathbf{1}^{\T}\mx_*=0$); (iii) $\|\nabla F(\mathbf{1}x_*^{\T})\|^2=\|\mathbf{1}x_*-\mx_*\|^2=\|\mx_*\|^2=\Omega(n)$; 
		(iv) $\mx(0)=\mx_*$.\footnote{Assumptions (iii) and (iv) correspond to the conditions $\|\mx(0)-\mathbf{1}x_*^{\T}\|^2=\mathcal{O}(n)$ and $\|\nabla F(\mathbf{1}x_*^{\T})\|^2=\mathcal{O}(n)$ assumed in the main results such as Theorem \ref{Thm: best rate} and Corollary \ref{cor: transient_time}.} Then $\ox(0)=x_*=0$, and from relation (\ref{DSGD_quardratic_induction}) it follows
		\begin{multline}
			\mx(k)-\mathbf{1}\ox(k)^{\T}=\left(\prod_{t=0}^{k-1}(1-\alpha_t)\right)\rho_w^k\mx_*\\
			+\sum_{t=0}^{k-1}\left[\left(\prod_{j=t+1}^{k-1}(1-\alpha_j)\right)\alpha_t\rho_w^{k-t}\right]\mx_*+\epsilon(k),
		\end{multline}
		where $\epsilon(k)$ captures the random perturbation caused by gradient noise that has mean zero.
		Therefore, 
		\begin{multline*}
			\bE[\|\mx(k)-\mathbf{1}\ox(k)^{\T}\|^2]\ge \\ \left\|\sum_{t=0}^{k-1}\left[\left(\prod_{j=t+1}^{k-1}(1-\alpha_j)\right)\alpha_t\rho_w^{k-t}\right]\mx_*\right\|^2.
		\end{multline*}
		Recalling the definition $V(k)=\bE[\|\mx(k)-\mathbf{1}\ox(k)^{\T}\|^2$ and $\tilde{V}(k)=V(k-K)$, and noticing that $\alpha_k=\frac{\theta}{k+K}$, we have
		\begin{align*}
			\tilde{V}(k) \ge &\left\{\sum_{t=K}^{k-1}\left[\left(\prod_{j=t+1}^{k-1}\left(1-\frac{\theta}{j}\right)\right)\frac{\theta}{t}\rho_w^{k-t}\right]\right\}^2\|\mx_*\|^2 \\
			\ge & \left\{\sum_{t=K}^{k-1}\left[\frac{(t+1)^{2\theta}}{k^{2\theta}}\frac{\theta}{t}\rho_w^{k-t}\right]\right\}^2\|\mx_*\|^2,
		\end{align*}
		where we invoked Lemma \ref{lem: product} for the second inequality. Then,
		\begin{align}
			\label{tilde V_lower_pre}
			\tilde{V}(k) & \ge \left[\frac{\theta\rho_w^k}{k^{2\theta}}\sum_{t=K}^{k-1}\frac{(t+1)^{2\theta-1}}{\rho_w^t}\right]^2\|\mx_*\|^2\notag\\
			& \ge \left[\frac{\theta\rho_w^k}{k^{2\theta}}\int_{t=K-1}^{k-1}\frac{(t+1)^{2\theta-1}}{\rho_w^t}dt\right]^2\|\mx_*\|^2.
		\end{align}
		Note that when $k\ge \frac{4\theta}{(-\ln \rho_w)}$,
		\begin{multline*}
			\int_{t=K-1}^{k-1}\frac{(t+1)^{2\theta-1}}{\rho_w^t}dt\ge \frac{2}{3}\left.\frac{(t+1)^{2\theta-1}}{(-\ln \rho_w)\rho_w^t}\right|_{t=K-1}^{k-1}\\
			=\frac{2k^{2\theta-1}}{3(-\ln \rho_w)\rho_w^{k-1}}-\frac{2K^{2\theta-1}}{3(-\ln \rho_w)\rho_w^{K-1}}
			\ge \frac{k^{2\theta-1}}{2(-\ln \rho_w)\rho_w^{k-1}}.
		\end{multline*}
		From (\ref{tilde V_lower_pre}),
		\begin{equation*}
			\tilde{V}(k)\ge  \left[\frac{\theta\rho_w}{2(-\ln \rho_w)k}\right]^2\|\mx_*\|^2=\Omega\left(\frac{n}{(1-\rho_w)^2}\right)\frac{1}{k^2},
		\end{equation*}
	{\sp where the equality is obtained from the Taylor expansion of $\ln \rho_w$ when $\rho_w\rightarrow 1$.}
		Since
		\begin{equation*}
			\frac{1}{n}\sum_{i=1}^n\bE[\|x_i(k)-x_*\|^2]=U(k)+\frac{V(k)}{n}\ge \Omega\left(\frac{1}{(1-\rho_w)^2}\right)\frac{1}{k^2},
		\end{equation*}
		setting this to be at most $\frac{\sp{\bar{M}}}{nk}$, we obtain that the transient time for DSGD to reach the asymptotic convergence rate is lower bounded by $\Omega\left(\frac{n}{(1-\rho_w)^2}\right)$, based on an argument similar to that of Corollary \ref{cor: transient_time}.
	\end{proof}
	
	\section{Numerical Examples}
	\label{sec: numerical}
	
	In this section, we provide two numerical example to verify and complement our theoretical findings. 
	
	\subsection{Ridge Regression}
	Consider the \emph{on-line} ridge regression problem, i.e.,
	\begin{equation}
		\label{Ridge Regression}
		\min_{x\in \mathbb{R}^{p}}f(x)=\frac{1}{n}\sum_{i=1}^nf_i(x)\left(=\mathbb{E}_{u_i,v_i}\left[\left(u_i^{\T} x-v_i\right)^2+\rho\|x\|^2\right]\right),
	\end{equation}
	where $\rho>0$ is a penalty parameter.
	Each agent $i$ collects data samples in the form of $(u_i,v_i)$ continuously where $u_i\in\mathbb{R}^p$ represent the features and $v_i\in\mathbb{R}$ are the observed outputs. Assume each $u_i\in[-0.5,0.5]^p$ is uniformly distributed, and $v_i$ is drawn according to $v_i=u_i^{\T} \tilde{x}_i+\varepsilon_i$, where $\tilde{x}_i$ are predefined parameters evenly located in $[0,10]^p$, and $\varepsilon_i$ are independent Gaussian random variables (noise) with mean $0$ and variance $0.01$.
	Given a pair $(u_i,v_i)$, agent $i$ can compute an estimated (unbiased) gradient of $f_i(x)$: $g_i(x,u_i,v_i)=2(u_i^{\T}x -v_i)u_i+2\rho x$.
	{\sp Problem (\ref{Ridge Regression}) has a unique solution $x_*$ given by 
		\begin{multline}
			\label{Ridge_opt}
			x_* = \left(\sum_{i=1}^n\mathbb{E}_{u_i}[u_iu_i^{\T}]+n\rho\mathbf{I}\right)^{-1}\sum_{i=1}^n\mathbb{E}_{u_i}[u_iu_i^{\T}]\tilde{x}_i\\
			=  \frac{1}{3}\left(\frac{1}{3}+\rho\right)^{-1}\frac{1}{n}\sum_{i=1}^n \tilde{x}_i.
		\end{multline}
	}
	
	Suppose $p=10$ and $\rho=1$. We compare the performance of DSGD (\ref{eq: x_i,k}) and the centralized implementation (\ref{eq: centralized}) for solving problem (\ref{Ridge Regression}) with the same stepsize policy $\alpha_k=20/(k+20),\forall k$, {\sp and the same initial solutions: $x_i(0)=\mathbf{0},\ \forall i$, (DSGD) and $x(0)=\mathbf{0}$ (SGD). It can be seen from \eqref{Ridge_opt} and the definition of $\tilde{x}_i$ that  $\sum_{i=1}^{n}\|x_i(0)-x_*\|^2 = \mathcal{O}(n)$. Moreover, $\nabla f_i(x_*)=2\mathbb{E}_{u_i,v_i}\left[(u_i^{\T}x_* -v_i)u_i\right]+2\rho x_* = 2\mathbb{E}_{u_i}[u_iu_i^{\T}](x_*-\tilde{x}_i)+2\rho x_* = \frac{2}{3}(x_*-\tilde{x}_i)+2\rho x_*$. Therefore, we have $\sum_{i=1}^{n}\|\nabla f_i(x_*)\|^2=\mathcal{O}(n)$.
	
In Fig. \ref{comparison_grid_circle}, we provide an illustration example that compares the performance of DSGD and SGD, assuming $n=25$. For DSGD, we consider two different network topologies: ring network topology as shown in Fig. \ref{Figures_ring}(a) and square grid network topology as shown in Fig. \ref{Figures_ring}(a). For both network topologies, we use Metropolis weights for constructing the maxing matrix $\mW$ (see \cite{nedic2018network}). It can be seen that DSGD performs asymptotically as well as SGD, while the time it takes for DSGD to catch up with SGD depends on the network topology. For grid networks which are better connected than rings, the corresponding transient time is shorter.
}
	\begin{figure}[htbp]
		\centering
		\includegraphics[width=2.9in]{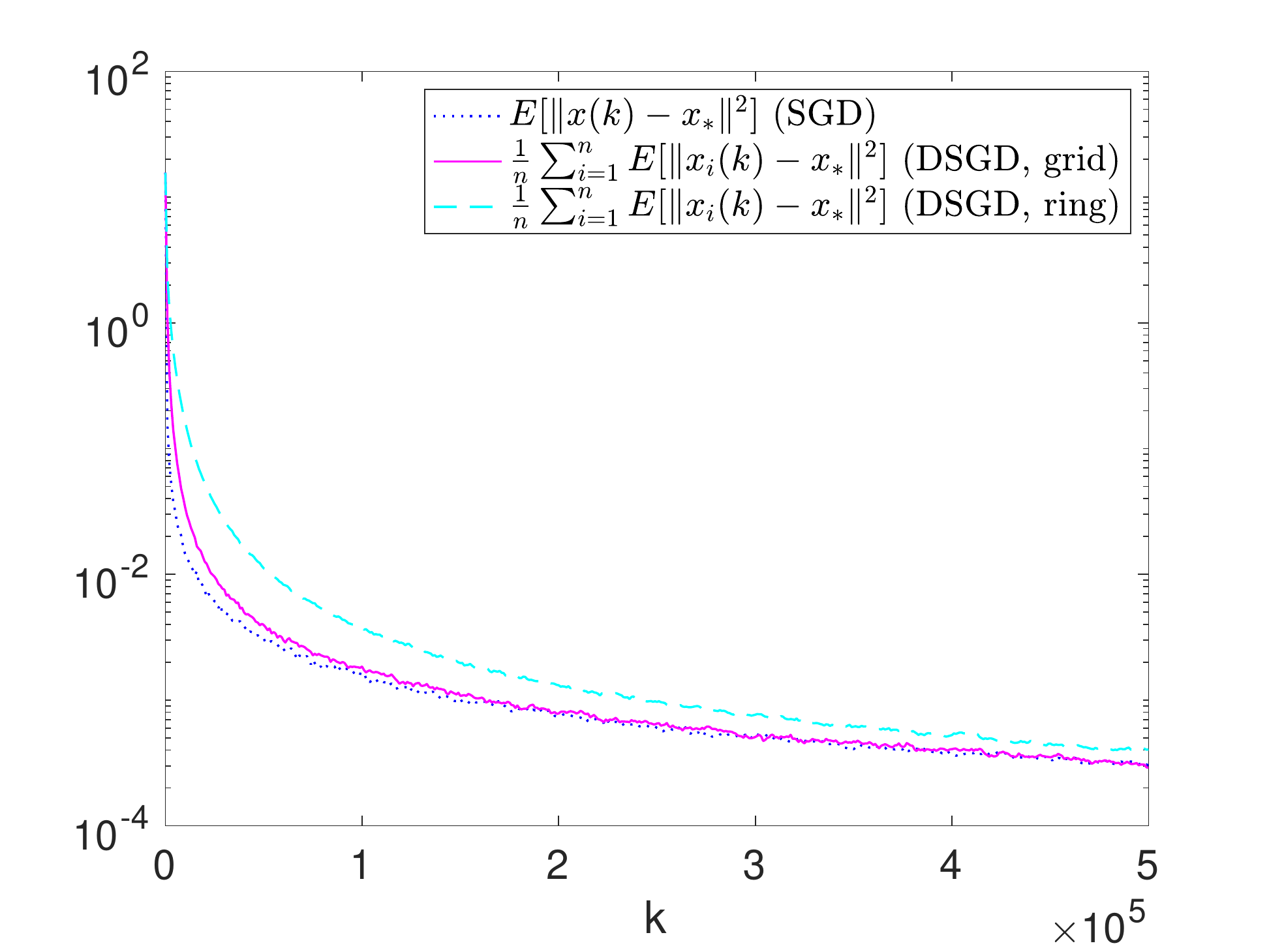}
		\caption{The performance comparison between DSGD and SGD for online Ridge regression ($n=25$). The results are averaged over $200$ Monte Carlo simulation.}
		\label{comparison_grid_circle}
	\end{figure}
	 
	{\sp To further verify the conclusions of Corollary \ref{cor: transient_time} and Theorem \ref{Thm: sharpness},} we define the transient time for DSGD as $\inf\{k: \frac{1}{n}\sum_{i=1}^n\bE[\|x_i(k)-x_*\|^2]\le 2\bE[\|x(k)-x_*\|^2]\}$. For DSGD, we first assume a ring network topology and plot the transient times for DSGD and $\frac{4n}{(1-\rho_w)^2}$ as a function of the network size $n$ in Fig. \ref{Figures_ring}(b). We then consider a square grid network topology as shown in Fig. \ref{Figures_ring}(a) and plot the transient times for DSGD and $\frac{7n}{(1-\rho_w)^2}$ in Fig. \ref{Figures_ring} (b).  It can be seen that the two curves in Fig. \ref{Figures_ring}(b) and Fig. \ref{Figures_grid}(b) are close to each other, respectively. This verifies the sharpness of Corollary \ref{cor: transient_time}.
	\begin{figure}[htbp]
		\centering
		\subfloat[Ring network topology.]{\includegraphics[width=1.9in]{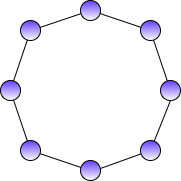}} \label{Figure_ring_graph}
		\subfloat[Transient times for the ring network topology.]{\includegraphics[width=2.8in]{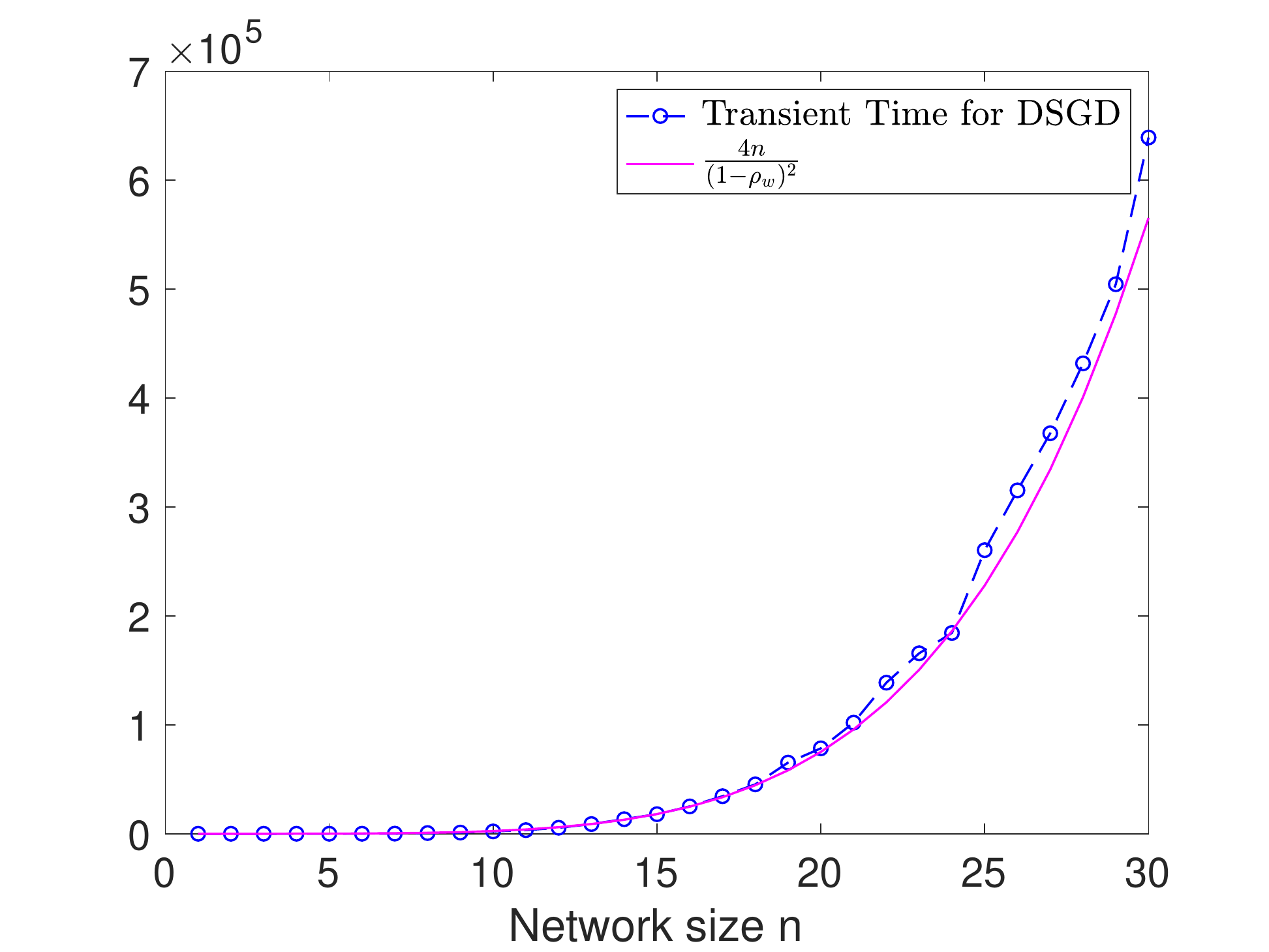}} \label{Figure_transient}
		\caption{Comparison of the transient times for DSGD and $\frac{4n}{(1-\rho_w)^2}$ as a function of the network size $n$ for the ring network topology. The expected errors are approximated by averaging over $200$ simulation results.}
		\label{Figures_ring}
	\end{figure}
	\begin{figure}[htbp]
		\centering
		\subfloat[Square grid network topology.]{\includegraphics[width=1.9in]{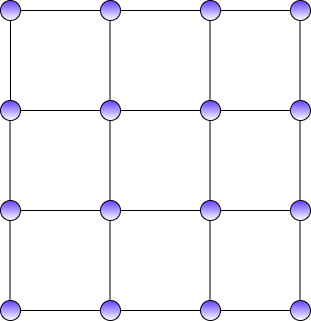}} \label{Figure_grid}
		\subfloat[Transient times for the square grid network topology.]{\includegraphics[width=2.8in]{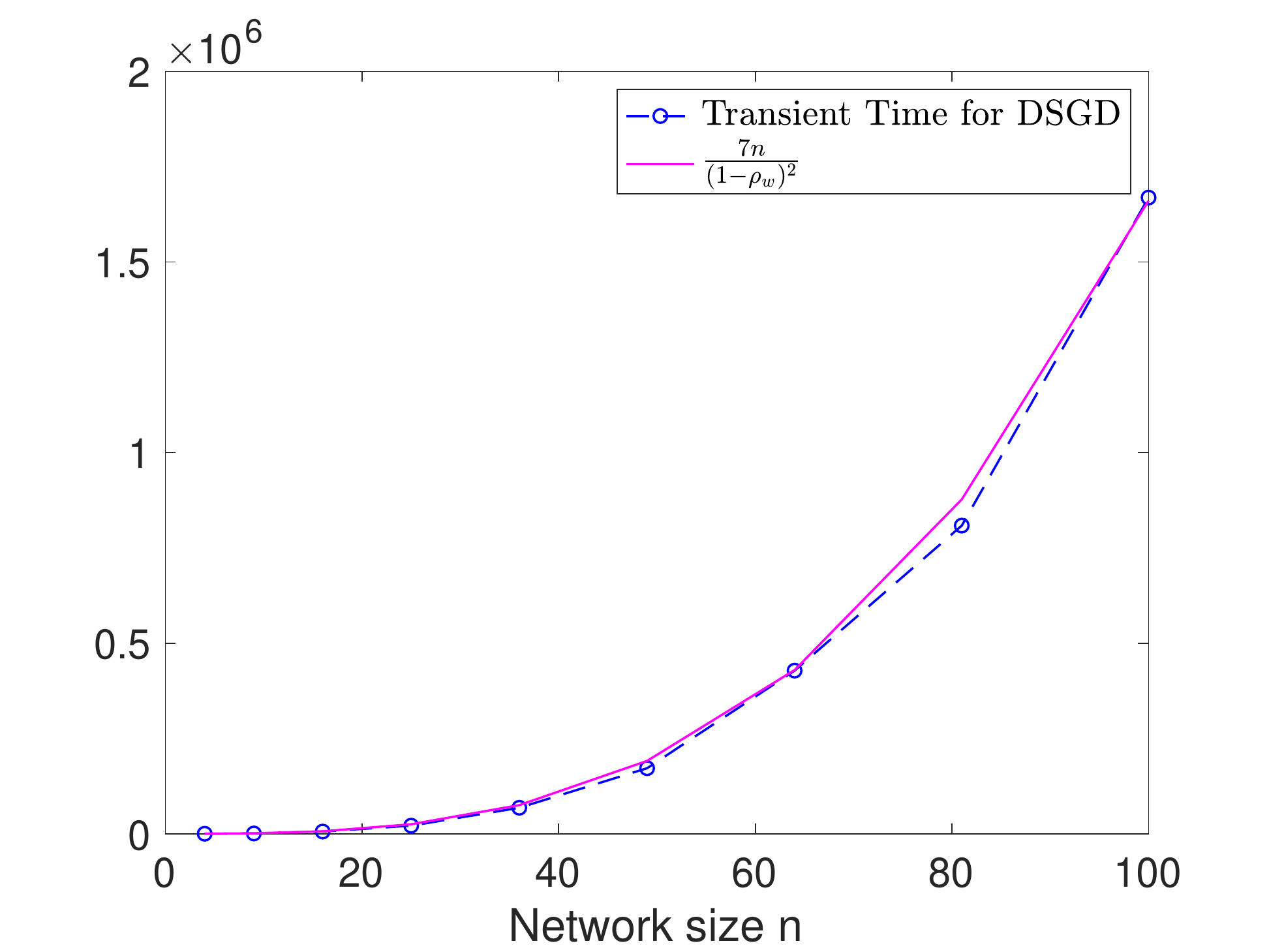}} \label{Figure_transient_grid}
		\caption{Comparison of the transient times for DSGD and $\frac{7n}{(1-\rho_w)^2}$ as a function of the network size $n$ for the square grid network topology ($n=4,9,16,25,36,49,64,81,100$). The expected errors are approximated by averaging over $200$ simulation results.}
		\label{Figures_grid}
	\end{figure}
	
	\subsection{Logistic Regression}
	{\sp Consider the problem of classification on the MNIST dataset of handwritten digits (http://yann.lecun.com/exdb/mnist/). In particular, we classify digits $1$ and $2$ using logistic regression.\footnote{\sp The problem can be extended to classifying all $10$ handwritten digits with multinomial logistic regression.} There are $12700$ data points in total where each data point is a pair $(u,v)$ with $u\in\mathbb{R}^{785}$ being the image input and $v\in\{0,1\}$ being the label.\footnote{\sp Digit $1$ is represented by label $0$ and digit $2$ is represented by label $1$.}

Suppose each agent $i\in\mathcal{N}$ possesses a distinct local dataset $\mathcal{S}_i$ that is randomly taken from the database.  To apply logistic regression for classification, we solve the following optimization problem based on all the agents' local datasets:
\begin{equation}
	\label{Logistic Regression}
	\min_{x\in \mathbb{R}^{785}}f(x)=\frac{1}{n}\sum_{i=1}^n f_i(x),
\end{equation}
where
\begin{multline*}
	f_i(x)  := \frac{1}{|\mathcal{S}_i|}\sum_{j\in \mathcal{S}_i}\left[\log(1+\exp(-x^{\T}u_j)) + (1-v_j)x^{\T}u_j\right]\\
	+\frac{\lambda}{2}\|x\|^2,
\end{multline*}
where $\lambda$ is the regularization parameter.\footnote{\sp The obtained optimal solution $x_*$ of problem \eqref{Logistic Regression} can then be used for predicting the label for any image input $u$ through the decision function $h(u):=\frac{1}{1+\exp(-x_*^{\T}u)}$.}
Given any solution $x$,  agent $i$  is able to compute an unbiased estimate of $\nabla f_i(x)$ using one (or a minibatch of) randomly chosen data point $(u_i,v_i)$ from $\mathcal{S}_i$, that is,
\begin{equation*}
	g_i(x,u_i,v_i) = \frac{-u_j}{1+\exp(x^{\T}u_j)} + (1-v_j)u_j+\lambda x.
\end{equation*}

In the experiments, suppose each local dataset $\mathcal{S}_i$ contains $50$ data points, and $\lambda = 1$. At each iteration of the DSGD algorithm, agent $i$ computes a stochastic gradient of $f_i(x_i(k))$ with one randomly chosen data point from $\mathcal{S}_i$. We compare the performance of DSGD (\ref{eq: x_i,k}) and centralized SGD (\ref{eq: centralized}) for solving problem \eqref{Logistic Regression} with the same stepsize policy $\alpha_k = 6/(k+20),\forall k$,  and the same initial solutions: $x_i(0)=\mathbf{0},\ \forall i$, (DSGD) and $x(0)=\mathbf{0}$ (SGD). It can be numerically verified that $\sum_{i=1}^{n}\|x_i(0)-x_*\|^2=\mathcal{O}(n)$ and $\sum_{i=1}^{n}\|\nabla f_i(x_*)\|^2=\mathcal{O}(n)$. 

The transient time for DSGD is defined in the same way as in the ridge regression example. In Fig. 4 and Fig. 5, we plot the transient times for DSGD as a function of the network size $n$ for ring and grid networks, respectively. We find that the curves are close to $\frac{n}{4(1-\rho_w)^{1.5}}$, rather than a multiple of $\frac{n}{(1-\rho_w)^{2}}$, implying that the experimental results are better than the theoretically derived worst-case performance  given in Corollary \ref{cor: transient_time}. Hence in practice, the performance of the DSGD algorithm depends on the specific problem instances and can be better than the worst-case situation  in terms of transient times. 
}
\begin{figure}[htbp]
	\centering
	\includegraphics[width=2.8in]{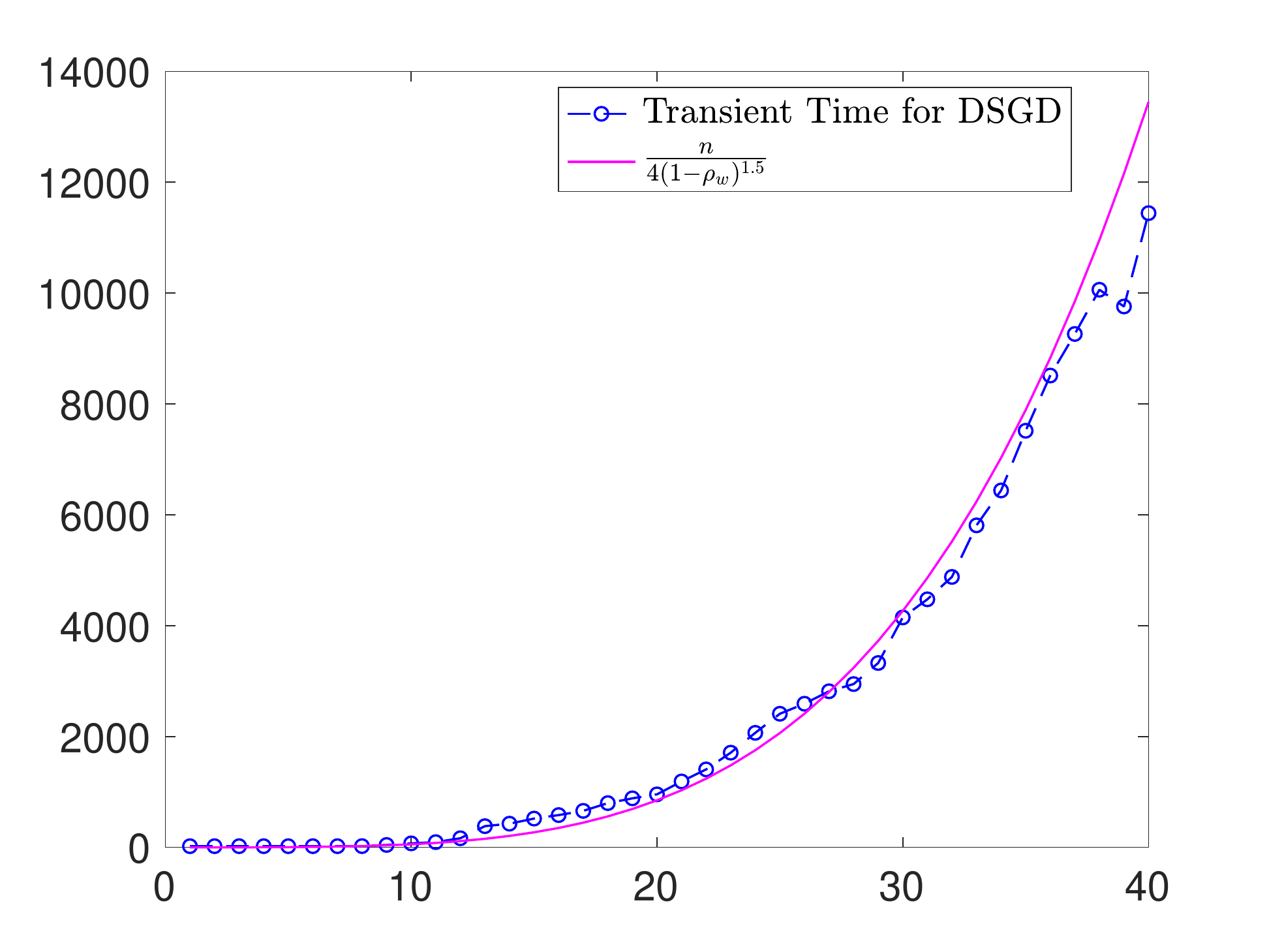} \label{Figure_logistic_transient}
	\caption{Comparison of the transient times for DSGD and $\frac{n}{4(1-\rho_w)^{1.5}}$ as a function of the network size $n$ for the ring network topology. The expected errors are approximated by averaging over $200$ simulation results.}
	\label{Figures_logistic_ring}
\end{figure}
\begin{figure}[htbp]
	\centering
	\includegraphics[width=2.8in]{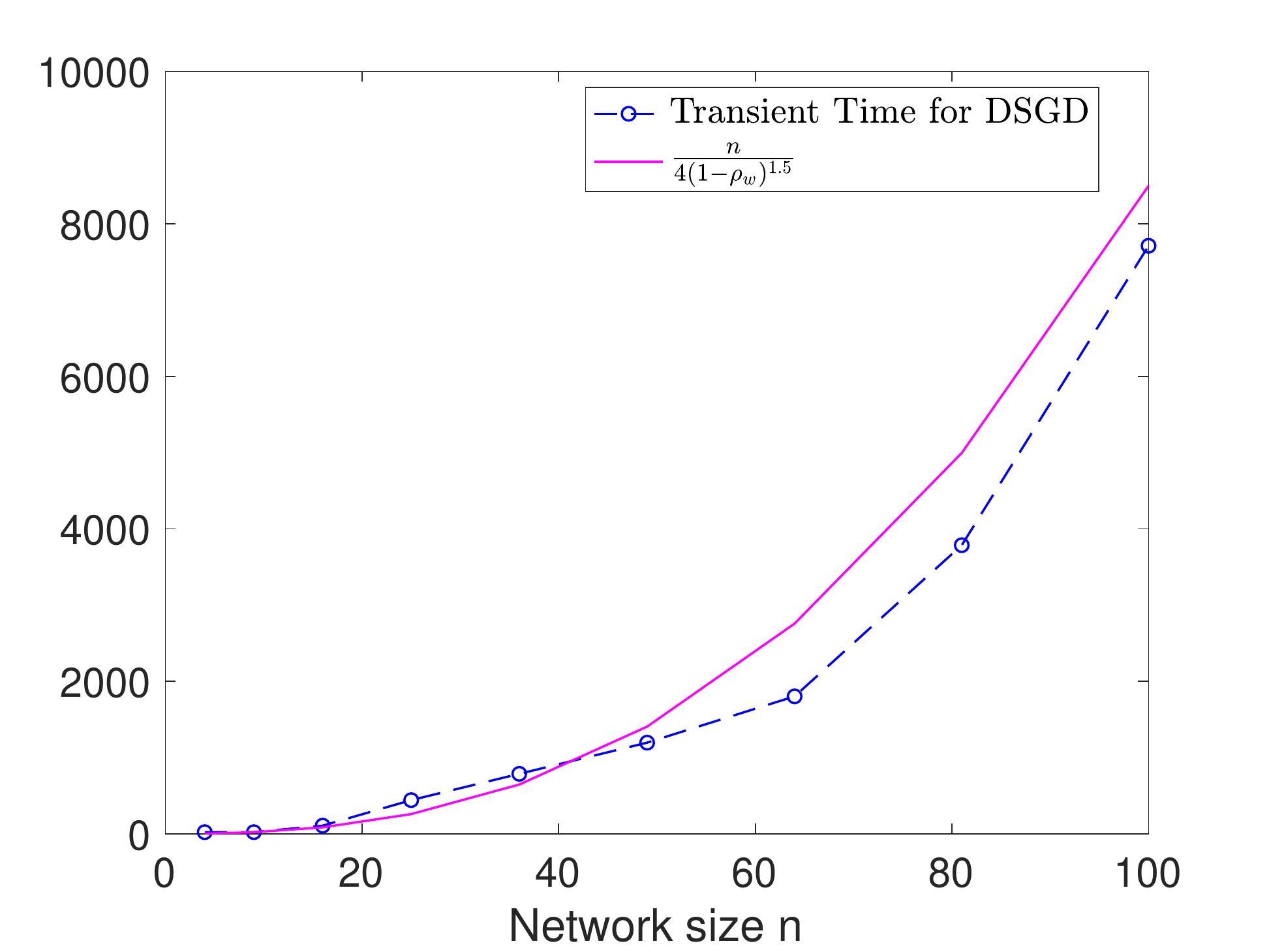} \label{Figure_logistic_transient_grid}
	\caption{Comparison of the transient times for DSGD and $\frac{n}{4(1-\rho_w)^{1.5}}$ as a function of the network size $n$ for the grid network topology ($n=4,9,16,25,36,49,64,81,100$). The expected errors are approximated by averaging over $200$ simulation results.}
	\label{Figures_logistic_grid}
\end{figure}

	\section{Conclusions}
	\label{sec: conclusions}
	
	This paper is devoted to the non-asymptotic analysis of network independence for the distributed stochastic gradient descent (DSGD) method. We show that the algorithm asymptotically achieves the optimal network independent convergence rate compared to SGD, and identify the non-asymptotic convergence rate as a function of characteristics of the objective functions and the network. In addition, we compute the time needed for DSGD to reach its asymptotic rate of convergence and prove the sharpness of the obtained result. Future work will consider more general problems such as nonconvex objectives and constrained optimization.

	\ifCLASSOPTIONcaptionsoff
	\newpage
	\fi

	
	
	\bibliographystyle{IEEEtran}
	\bibliography{mybib}
	%
	%
	%
	
	%
	
	\begin{IEEEbiographynophoto}{Shi Pu}
		is currently an assistant professor in the School of Data and Decision Science, The Chinese University of Hong Kong, Shenzhen, China. He is also affiliated with Shenzhen Research Institute of Big Data. He received a B.S. Degree from Peking University, in 2012, and a Ph.D. Degree in Systems Engineering from the University of Virginia, in 2016. He was a postdoctoral associate at the University of Florida, from 2016 to 2017, a postdoctoral scholar at Arizona State University, from 2017 to 2018, and a postdoctoral associate at Boston University, from 2018 to 2019. His research
		interests include distributed optimization, network science, machine learning, and game theory.
	\end{IEEEbiographynophoto}
	
	\begin{IEEEbiographynophoto}{Alex Olshevsky}
		received the B.S. degrees in applied mathematics and electrical engineering from Georgia Tech and the Ph.D. degree in EECS from MIT. He is currently an Associate Professor at the ECE department at Boston University. Dr. Olshevsky is a recipient of the NSF CAREER Award, the AFOSR Young Investigator Award, the INFORMS Prize for the best paper on the interface of operations research and computer science, a SIAM Award for annual paper from the SIAM Journal on Control and Optimization chosen to be reprinted in SIAM Review, and an IMIA award for best paper on clinical informatics.
	\end{IEEEbiographynophoto}
	
	\begin{IEEEbiographynophoto}{Ioannis~Ch.~Paschalidis}
	(M'96--SM'06--F'14) received a Ph.D. in EECS from the
Massachusetts Institute of Technology, Cambridge, MA, USA, in 1996.  He is a
Professor and Data Science Fellow at Boston University, Boston, MA and the Director of the Center
for Information and Systems Engineering. His research interests lie in the fields of
systems and control, networks, applied probability, optimization, operations
research, computational biology, and medical informatics. He is a recipient of the
NSF CAREER award and several best paper awards. During 2013--2019 he was the founding
Editor-in-Chief of the IEEE Transactions on Control of Network Systems.
	\end{IEEEbiographynophoto}
	
	
	
	

\end{document}